\title[Stability for weighted isoperimetric inequalities]{Sharp quantitative stability for \\isoperimetric inequalities with homogeneous weights}
\author[E. Cinti]{E. Cinti}
\address{E.C. --- Alma Mater Studorium Università di Bologna, Dipartimento di Matematica, Piazza di Porta San Donato 5, 40126 Bologna, Italy.}
\email{eleonora.cinti5@unibo.it}
\author[F. Glaudo]{F. Glaudo}
\address{F.G. --- ETH Z\"urich, Mathematics Dept., R\"amistrasse 101, 8092 Z\"urich, Switzerland.}
\email{federico.glaudo@math.ethz.ch}
\author[A. Pratelli]{A. Pratelli}
\address{A.P. --- Universitá di Pisa, Dipartimento di Matematica, Largo B. Pontecorvo 5, 56127 Pisa, Italy.}
\email{aldo.pratelli@dm.unipi.it}
\author[X. Ros-Oton]{X. Ros-Oton}
\address{X.R. --- Universit\"at Z\"urich, Institut f\"ur Mathematik, Winterthurerstrasse 190, 8057 Z\"urich, Switzerland \&
ICREA, Pg. Llu\'is Companys 23, 08010 Barcelona, Spain \&
Universitat de Barcelona, Departament de Matem\`atiques i Inform\`atica, Gran Via de les Corts Catalanes 585, 08007 Barcelona, Spain.}
\email{xavier.ros-oton@math.uzh.ch}
\author[J. Serra]{J. Serra}
\address{J.S. --- ETH Z\"urich, Mathematics Dept., R\"amistrasse 101, 8092 Z\"urich, Switzerland.}
\email{joaquim.serra@math.ethz.ch}
 \keywords{Weighted isoperimetric inequalities, Quantitative stability.}
 \subjclass[2010]{Primary 49Q10; Secondary 49Q20, 28A75, 49K40}
\begin{document} 

\begin{abstract}
We prove the sharp quantitative stability for  a wide class of weighted isoperimetric inequalities.
More precisely, we consider isoperimetric inequalities in convex cones with homogeneous weights. 

Inspired by the proof of such isoperimetric inequalities through the ABP method (see \cite{CRS}), we construct a new convex coupling (i.e., a map that is the gradient of a convex function) between a generic set $E$ and the minimizer of the inequality (as in Gromov's proof of the isoperimetric inequality). 
Even if this map does not come from optimal transport,  and even if there is a weight in the inequality, we adapt the methods of \cite{FMP} and prove that if $E$ is almost optimal for the inequality then it is quantitatively close to a minimizer \emph{up to translations}. 
Then, a delicate analysis is necessary to rule out the possibility of translations.

As a step of our proof, we establish a sharp regularity result for \emph{restricted} convex envelopes of a function that might be of independent interest.
\end{abstract}

\maketitle


\section{Introduction}

\addtocontents{toc}{\protect\setcounter{tocdepth}{1}}
\subsection{Background}
The quantitative stability of functional/geometric inequalities has been an increasingly active field in recent years. The interest lies in understanding whether \emph{almost} minimizers of a certain inequality (i.e., the isoperimetric inequality or the Sobolev inequality) are \emph{quantitatively} close to a minimizer. Let us mention the works \cite{FuscoMaggiPratelli2008,CicaleseLeonardi2012,FMP,FI,BogeleinDuzaarScheven2015,BarchiesiBrancoliniJulin2017,IndreiNurbekyan2015,Neumayer2020,FigalliZhang2020} regarding the stability of the isoperimetric inequality, \cite{BianchiEgnell1991,FuscoMaggiPratelli2007,CianchiFuscoMaggiPratelli2009,nguyen2019,FigalliGlaudo2020,FigalliNeumayer2019} about the stability of Sobolev inequalities and \cite{MaggiPonsiglionePratelli2014,BoroczkyKaroly2010,FigalliMaggiMooney2018,FigalliJerison2017} for the stability of various other inequalities.

On the other hand, many different kinds of weighted isoperimetric inequalities have been established in the literature, let us mention \cite{MorganPratelli2013,Milman2015,BobkovLedouox2009,BettaEtAl1999,BarchiesiBrancoliniJulin2017,Borell75,CianchiFuscoMaggiPratelli2011,RosalesEtAl2008,Chambers2019,CRS}.

The papers that lie at the intersection of the two topics, i.e., quantitative weighted isoperimetric inequalities, are rather rare. Up to our knowledge, only \cite{CianchiFuscoMaggiPratelli2011,BarchiesiBrancoliniJulin2017} prove a quantitative weighted isoperimetric inequality. Both papers consider only the case of the Gaussian weight on $\R^n$.

In this paper we establish for the first time sharp quantitative stability for a wide class of weighted isoperimetric inequalities.
More precisely, we do so for the class of weighted isoperimetric inequalities in convex cones considered in \cite{CRS}. 
The analogous stability result without weights is proven with different methods in \cite{FI}.

\subsection{Results}
Given an open convex cone $\Sigma\subseteq\R^n$ and a weight $w:\Sigma\to\co0\infty$, let us define the weighted volume and perimeter of a set $E\subseteq\Sigma$ with smooth boundary as
\begin{equation*}
    w(E) \defeq \int_{E} w(x)\de x \comma 
    \qquad 
    \Per_w(E) \defeq \int_{\partial E\cap\Sigma}w(x)\de\Haus^{n-1}(x) \fullstop
\end{equation*}
See \cref{sec:notation} for the definition of the perimeter $\Per_w(E)$ for nonsmooth sets. 
Notice that only the boundary of $E$ that is \emph{inside} the cone matters when computing the perimeter $\Per_w(E)$.

Let us recall the weighted isoperimetric inequality in a convex cone.
\begin{theorem}[\cite{CRS}]\label{isop-thm}
Let $\Sigma\subseteq\R^n$ be an open convex cone with vertex at $0$, and let $\alpha>0$.
Let $w$ be a nonnegative continuous function in $\overline\Sigma$ (not constantly $0$) such that
\[w \text{\  is \ } \alpha\text{-homogeneous\footnotemark, \ and \ } w^{1/\alpha} \text{\ is concave in\ } \Sigma.\]
\footnotetext{Here, and everywhere in this paper, we say that a function $w:\Sigma\to\R$ is $\alpha$-homogeneous if $w(tx)=t^\alpha w(x)$ for any $x\in \overline\Sigma$ and any $t>0$.}
Then, for all measurable sets $E\subseteq\Sigma$ with $w(E)<\infty$,
\begin{equation}\label{isop-ineq}
\frac{\Per_w(E)}{w(E)^{\frac{D-1}{D}}}\geq \frac{\Per_w(B_1\cap \Sigma)}{w(B_1\cap \Sigma)^{\frac{D-1}{D}}},
\end{equation}
where $D\defeq n+\alpha$. By homogeneity, $B_1$ can be replaced by $B_r$ for any $r>0$.
\end{theorem}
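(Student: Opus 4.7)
The plan is to follow the Alexandrov--Bakelman--Pucci (ABP) scheme developed in \cite{CRS}. By a standard approximation it suffices to treat a smooth bounded set $E\subseteq\Sigma$, and by scaling we may normalize $w(E)=w(B_1\cap\Sigma)$; using the $\alpha$-homogeneity of $w$ and polar coordinates one obtains the identity $\Per_w(B_1\cap\Sigma)=D\,w(B_1\cap\Sigma)$, so the goal reduces to showing $\Per_w(E)\geq D\,w(B_1\cap\Sigma)$.

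Next I would set $b\defeq \Per_w(E)/w(E)$ and solve the weighted Neumann problem
\[
\operatorname{div}(w\,\nabla u)=b\,w\ \text{in}\ E,\qquad \partial_\nu u=1\ \text{on}\ \partial E\cap\Sigma,\qquad w\,\partial_\nu u=0\ \text{on}\ \partial E\cap\partial\Sigma,
\]
whose compatibility is automatic by the choice of $b$. Existence and interior smoothness follow from standard weighted elliptic theory, since $w$ is continuous and strictly positive inside $\Sigma$.

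The heart of the argument is the ABP covering step. Let $\Gamma\subseteq E$ be the \emph{lower contact set} of $u$ on $\overline E$, so that $D^2u\geq 0$ on $\Gamma$. I claim $\nabla u(\Gamma)\supseteq B_1\cap\Sigma$: for each $p\in B_1\cap\Sigma$ the minimum of $x\mapsto u(x)-p\cdot x$ on $\overline E$ cannot lie on $\partial E\cap\Sigma$ (by the Neumann condition $\partial_\nu u=1$ combined with $|p|<1$), nor on $\partial E\cap\partial\Sigma$ (using the degenerate Neumann condition and the fact that $\Sigma$ is a convex cone containing $p$), so it must be interior. The area formula then yields
\[
w(B_1\cap\Sigma)\leq\int_{\Gamma}w(\nabla u(x))\,\det D^2u(x)\,dx,
\]
and the remaining task is the pointwise bound $w(\nabla u(x))\det D^2u(x)\leq w(x)(b/D)^D$ on $\Gamma$. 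For this I would first differentiate the concavity of $w^{1/\alpha}$ and use Euler's identity $\nabla w(x)\cdot x=\alpha\,w(x)$ to obtain the key inequality $\nabla w(x)\cdot y\geq \alpha\,w(x)^{1-1/\alpha}w(y)^{1/\alpha}$ for $y\in\Sigma$; applying it with $y=\nabla u(x)$ and combining with the PDE written as $\Delta u=b-\nabla w\cdot\nabla u/w$ gives $\Delta u+\alpha\,t\leq b$ on $\Gamma$, where $t\defeq (w(\nabla u)/w(x))^{1/\alpha}$. Weighted AM--GM applied to the $n$ non-negative eigenvalues of $D^2u$ and $\alpha$ copies of $t$ then yields the pointwise bound, and integrating over $\Gamma\subseteq E$ closes the argument.

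The main obstacle is the boundary analysis inside the ABP covering step: ruling out that the minimum of $u-p\cdot x$ lies on $\partial E\cap\partial\Sigma$ requires using non-trivially both the convexity of $\Sigma$ and the degenerate Neumann condition, with extra care at points where $w$ vanishes. A secondary subtlety is ensuring enough regularity of $u$ for the area formula to be applied on $\Gamma$; since $u$ is smooth only in $E$, one typically invokes Alexandrov's theorem to guarantee that $D^2u$ exists almost everywhere on $\Gamma$ and that the change of variables remains valid.
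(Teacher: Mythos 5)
Your proposal reproduces the ABP scheme of \cite{CRS}, which is exactly what the paper sketches for \cref{isop-thm}: solve the weighted Neumann problem \cref{eq:intro_elliptic}, observe via a contact/covering argument that the gradient image of the lower contact set $\Gamma\subseteq E$ (where $\nabla^2 u\ge 0$) covers $B_1\cap\Sigma$, apply the area formula, invoke \cref{lem:5.1} (i.e., $\alpha(w(z)/w(x))^{1/\alpha}\le \nabla w(x)\cdot z/w(x)$, your $\nabla w(x)\cdot y\ge \alpha\, w(x)^{1-1/\alpha}w(y)^{1/\alpha}$), and close with the weighted AM--GM inequality with weights $(1,\dots,1,\alpha)$. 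The argument, the key lemma, and the identified subtleties (boundary analysis on $\partial E\cap\partial\Sigma$ for the covering step, and regularity of $u$ for the area formula when $w$ degenerates on $\partial\Sigma$) all coincide with the paper's presentation, so you have taken essentially the same route.
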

See \cite[Remark 1.4]{CRS} for a geometric justification of the concavity condition on the weight and \cite[Section 2]{CRS} for a number of examples of admissible weights.
An important example to have in mind is given by monomial weights:
\begin{equation*}
    w(x) = x_1^{A_1}\cdots x_n^{A_n} \quad\textrm{in}\quad \Sigma=\{x_1>0,\dots, x_n>0\},\quad \textrm{with}\ A_i>0 \fullstop
\end{equation*}
The authors do not give a characterization of the sets that achieve the equality in \cref{isop-ineq} (see \cite[2979]{CRS}). 

Our first result is the following characterization of the optimal sets.

\begin{proposition}\label{prop:uniqueness}
    Let $n,\alpha,\Sigma$ and $w$ be as in \cref{isop-thm} and assume\footnote{This assumption is always satisfied up to a rotation.} that $\Sigma=\R^k\times\widetilde\Sigma$, where $0\le k < n$ and $\widetilde\Sigma\subseteq\R^{n-k}$ is an open convex cone \emph{containing no lines}. 
    
    Then, a measurable set $E\subseteq\Sigma$, with $w(E)<\infty$, achieves the equality in \cref{isop-ineq} if and only if $E=B_r(x_0)$ for some $r>0$ and $x_0\in\R^k\times\{0_{\R^{n-k}}\}$.
\end{proposition}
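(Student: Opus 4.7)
The plan has two parts: an easy ``if'' direction, and a harder ``only if'' direction which I further split into two steps.

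For the ``if'' direction, the goal is to check that any $B_r(x_0) \cap \Sigma$ with $x_0 \in \R^k \times \{0\}$ is a minimizer. This reduces to showing that $w$ is invariant under translations in the $\R^k$ directions, which, combined with the obvious $\R^k$-invariance of $\Sigma$, makes $B_r(x_0) \cap \Sigma$ a translate of $B_r \cap \Sigma$ along an invariance direction of both, hence a minimizer by \cref{isop-thm}. The invariance of $w$ itself follows from the chain: $\R^k \subseteq \Sigma$ forces $w^{1/\alpha}$ to be concave, $1$-homogeneous and nonnegative along each line in $\R^k$ through $0$, hence identically zero there; superadditivity of $w^{1/\alpha}$ (which follows from concavity plus $1$-homogeneity) then upgrades this to full $\R^k$-translation invariance of $w$.

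For the ``only if'' direction, suppose $E$ is an equality case and, by scaling, normalize $w(E) = w(B_1 \cap \Sigma)$. The first step is to apply the Brenier/ABP-type convex coupling $T = \nabla\phi$ transporting $w\mathbf{1}_E$ onto $w\mathbf{1}_{B_1 \cap \Sigma}$ (either the map from \cite{CRS} or the convex coupling developed later in this paper), and to read off the equality cases of the pointwise inequalities whose integration yields \cref{isop-ineq}---namely the AM--GM inequality on the eigenvalues of $D^2\phi$, the concavity of $w^{1/\alpha}$ along transport trajectories, and a divergence-theorem boundary control. The AM--GM rigidity forces $D^2\phi$ to be a scalar multiple of the identity pointwise; the vanishing of off-diagonal second derivatives separates variables and shows the scalar is constant; and the Monge--Amp\`ere identity $w(\nabla\phi)\det D^2\phi = w$ combined with the $\alpha$-homogeneity of $w$ pin this constant down to $1$ (by sending $\mu \to \infty$ in the scaling relation $w(y - v/\mu) = \lambda^{n+\alpha}w(y)$). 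The conclusion of this step is that $T(x) = x + c$ is a pure translation, so $E = (B_1 \cap \Sigma) - c$ for some $c \in \R^n$.

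The second step, which I expect to be the main obstacle, is to prove $c \in \R^k \times \{0\}$. Write $c = (c_1, c_2)$ with $c_1 \in \R^k$ and $c_2 \in \R^{n-k}$, and argue by contradiction, assuming $c_2 \neq 0$. The inclusion $E \subseteq \Sigma$, together with the existence of points in $B_1 \cap \Sigma$ approaching the vertex $0$, yields $-c_2 \in \overline{\widetilde\Sigma}$; the convex-cone identity $\widetilde\Sigma + \overline{\widetilde\Sigma} \subseteq \widetilde\Sigma$ then gives $\widetilde\Sigma - c_2 \subseteq \widetilde\Sigma$. Because $\widetilde\Sigma$ contains no lines and $c_2 \neq 0$, this inclusion is \emph{strict}---otherwise iterating would produce the line $\R c_2 \subseteq \widetilde\Sigma$. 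The key geometric consequence is that $E = B_1(-c) \cap (\Sigma - c)$ has an \emph{extra} conical face $\partial(\Sigma - c) \cap \overline{B_1(-c)}$ lying in the interior $\Sigma^\circ$ with positive $\mathcal{H}^{n-1}$-measure. Since $w > 0$ on $\Sigma^\circ$ (a consequence of $w^{1/\alpha}$ being concave, $1$-homogeneous, nonnegative, and not identically zero), this face contributes strictly positively to $\Per_w(E)$; meanwhile the spherical portion of $\partial E$ already accounts, via the $c$-invariance $w(y - c) = w(y)$ (from measure preservation by $T$) and a change of variables, for at least $\Per_w(B_1 \cap \Sigma)$. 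The resulting strict inequality $\Per_w(E) > \Per_w(B_1 \cap \Sigma)$ contradicts the assumed equality in \cref{isop-ineq}, forcing $c_2 = 0$ and concluding the proof.
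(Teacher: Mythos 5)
Your proposal follows the same overall structure as the paper's proof: construct a convex coupling, extract rigidity from the equality case, and rule out bad translations by a perimeter computation. Two points, however, need attention.

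The main conceptual misstep is the framing of $\nabla\phi$ as a map ``transporting $w\mathbf{1}_E$ onto $w\mathbf{1}_{B_1 \cap \Sigma}$'' and the invocation of the Monge--Amp\`ere identity $w(\nabla\phi)\det D^2\phi = w$ as a structural property of the coupling. The coupling of \cref{thm:weighted_coupling} and \cref{prop:weighted_coupling_properties} is a restricted convex envelope of a Neumann solution, not an optimal transport map: the only pointwise constraint it carries is the scalar inequality \cref{eq:weighted_fundamental_control}, it does not in general push forward the weighted measure, and the Monge--Amp\`ere equation fails for it. That identity becomes true \emph{only as a consequence} of the equality case: saturating the chain of inequalities in \cref{eq:local_weight_prop0} forces, a.e.\ on $E$, that $\nabla^2\phi = s\,\id$, that $s = (w(\nabla\phi)/w)^{1/\alpha}$, and that $ns + \alpha s = D$; these already give $s\equiv 1$, $\nabla^2\phi = \id$, and $w(\nabla\phi)=w$, with no extra scaling-to-infinity argument. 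The paper obtains the same conclusion even more directly by plugging $\delta_w(E)=0$ into the quantitative estimates \cref{eq:hessian_control} and \cref{eq:weight_control}.

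A second gap is the step ``the vanishing of off-diagonal second derivatives separates variables and shows the scalar is constant.'' This, and the subsequent inference that $\nabla\phi(x)=x+c$ on all of $E$, needs $E$ to be connected in a quantitative sense: $\nabla^2\phi=\id$ holds only a.e.\ on $E$, and $\phi\in C^{1,1}(\R^n)$ need not be globally of that form. The paper handles this by observing that $\delta_w(E)=0$ forces $G=E$ in \cref{thm:nice_subset_with_poincare}, hence $\tau(E)\geq 1+k(D)>1$, and then applying the Poincar\'e inequality of \cref{lem:trace_poincare} to the components of $\nabla\phi(x)-x$. Your argument does not address this.

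Your step 2 (ruling out $c_2\neq 0$) matches the paper's one-line claim that $\Per_w(E)=\Per_w(B_1\cap\Sigma)$ forces $x_0+\partial\Sigma\subseteq\partial\Sigma$, and is essentially sound, though the invariance $w(y-c)=w(y)$ that you use to evaluate the spherical cap's contribution again comes from the equality case, not from ``measure preservation by $T$.'' Your careful treatment of the ``if'' direction, which the paper leaves implicit, is correct.
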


From now on we assume, without loss of generality, that $\Sigma=\R^k\times\widetilde\Sigma$, where $0\le k < n$ and $\widetilde\Sigma\subseteq\R^{n-k}$ is an open convex cone \emph{containing no lines}.
Let us measure the distance between a set $E\subseteq\Sigma$ and the minimizers of the weighted isoperimetric inequality with the quantity
\begin{equation*}
    A_w(E) \defeq
    \inf_{x_0\in\R^k\times\{0_{\R^{n-k}}\}}
    \frac{
    w(E\Delta (B_r(x_0)\cap \Sigma))
    }{
    w(E)
    } \comma
\end{equation*}
where $r>0$ is such that $w(E)=w(B_r\cap \Sigma)$.

We define the weighted isoperimetric deficit of a set $E\subseteq\Sigma$ as
\[\delta_w(E)=\frac{\Per_w(E)}{c_*w(E)^{\frac{D-1}{D}}}-1,\]
where $c_*\defeq\Per_w(B_1\cap \Sigma)/w(B_1\cap \Sigma)^{\frac{D-1}{D}}=Dw(B_1\cap\Sigma)^{\frac1D}$ is the isoperimetric constant that comes from \cref{isop-ineq}, and  $D\defeq n+\alpha$. Notice that the identity $\Per_w(B_1\cap \Sigma)=Dw(B_1\cap \Sigma)$ follows from the homogeneity of the weight $w$.

The main result of the present paper is the following quantitative version of the weighted isoperimetric inequality.

\begin{theorem}\label{thm:main}
    Let $n$, $\alpha$, $\Sigma$ and $w$ be as in \cref{isop-thm} and assume that $\Sigma=\R^k\times\widetilde\Sigma$, where $0\le k<n$ and $\widetilde\Sigma\subseteq\R^{n-k}$ is an open convex cone containing no lines.
    
    Then, for all measurable sets $E\subseteq\Sigma$ with $w(E)<\infty$, it holds
    \begin{equation}\label{eq:quant-ineq}
    A_w(E)\leq C\sqrt{\delta_w(E)},
    \end{equation}
where $C$ is a constant that depends only on $n$, $\alpha$, $\Sigma$ and $w$.
\end{theorem}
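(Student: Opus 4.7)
The plan is to mimic the Figalli--Maggi--Pratelli strategy \cite{FMP}, with the convex coupling furnished by the ABP proof of \cref{isop-thm} from \cite{CRS} playing the role of the Brenier map, and then to handle the asymmetry between $\R^n$-translations and the allowed $(\R^k\times\{0\})$-translations by a separate geometric analysis of the target ball.

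First I would reduce to the small-deficit regime $\delta_w(E)\le\delta_0$ (enlarging $C$ otherwise) and normalize by homogeneity so that $w(E)=w(B_1\cap\Sigma)$, making $B_1\cap\Sigma$ the reference minimizer. Running the ABP construction of \cite{CRS} — together with the sharp regularity result for the restricted convex envelope announced in the abstract — I would produce a convex potential $u$ whose gradient $T=\nabla u$ is a convex coupling of $E$ onto $\overline{B_1\cap\Sigma}$. On the contact set, $T$ is sufficiently regular, $D^2u\ge 0$, and the weighted mass-conservation identity reads schematically $\det(D^2u)\cdot w(\nabla u)=w(x)$; the inequality \cref{isop-ineq} itself emerges by applying AM--GM to the eigenvalues of $D^2u$ and combining with the homogeneity of $w$.

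Next I would extract a quantitative version along the FMP lines: the AM--GM gap is quadratic in $D^2u-\lambda I$, and the inequality $T\cdot\nabla w\le |T|\,|\nabla w|$ used in the ABP proof has a quadratic defect measuring how radial $T$ is. Integrating both gaps over $E$ produces an estimate of the form
\begin{equation*}
    \int_{E}\bigl|D^2u-\mathrm{Id}\bigr|^2\,w\,dx+\int_{E}\bigl|T-x\bigr|_{\mathrm{rad}}^2\,w\,dx\;\le\;C\,\delta_w(E)\,w(E).
\end{equation*}
A weighted Poincar\'e-type inequality on $E$ (applied after a standard selection argument reducing to the case where $E$ is already known to be close to a ball) upgrades this to a bound
\begin{equation*}
    \int_{E}|T-x-v|^{2}\,w\,dx\;\le\;C\,\delta_w(E)\,w(E)
\end{equation*}
for some translation $v\in\R^n$, and Chebyshev together with $T(E)\subseteq\overline{B_1\cap\Sigma}$ then yields $w\bigl(E\,\Delta\,(B_1(v)\cap\Sigma)\bigr)\le C\sqrt{\delta_w(E)}\,w(E)$.

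The delicate last step is to remove the translation $v$. Writing $v=(v',v'')\in\R^k\times\R^{n-k}$, the normalization of $w(E)$ gives
\begin{equation*}
    w(B_1\cap\Sigma)-w(B_1(v)\cap\Sigma)\;\le\;w\bigl(E\,\Delta\,(B_1(v)\cap\Sigma)\bigr)\;\le\;C\sqrt{\delta_w(E)}\,w(E),
\end{equation*}
and by \cref{prop:uniqueness} the function $v''\mapsto w(B_1(v',v'')\cap\Sigma)$ attains its \emph{unique} maximum at $v''=0$. The plan is to prove a quantitative strict-maximality statement $w(B_1\cap\Sigma)-w(B_1(v)\cap\Sigma)\ge\eta(|v''|)$ with a power-type modulus $\eta$, using the $\alpha$-homogeneity of $w$, the concavity of $w^{1/\alpha}$, and the no-line property of $\widetilde\Sigma$; combined with the Lipschitz bound $w(B_1(v)\,\Delta\,B_1(v',0))\le C|v''|$ this absorbs the translation into the admissible family and closes the argument. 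I expect this final rigidity to be the main obstacle: both the weight and the cone can be quite degenerate (e.g.\ monomial weights with small exponents, or very narrow $\widetilde\Sigma$), and forcing a square-root dependence requires a careful second-variation-type analysis. A secondary — but technically nontrivial — obstacle is the regularity of the restricted convex envelope needed to justify classically the integration by parts in Step 2.
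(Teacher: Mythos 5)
Your high-level plan does match the paper's: ABP/convex-envelope coupling playing the role of the Brenier map, quantitative AM--GM, weighted Poincar\'e/trace after a selection argument, then a rigidity step to kill the translation. But two of the steps you treat as routine are exactly where the paper has to work hardest, and as you describe them they would not give the square-root rate.

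\textbf{From the interior estimate to the symmetric difference.} You claim that the interior bound $\int_E|T-x-v|^2\,w\lesssim\delta$ (or its $L^1$ version $\lesssim\sqrt\delta$) together with $T(E)\subseteq\overline{B_1\cap\Sigma}$ and ``Chebyshev'' yields $w(E\triangle(B_1(v)\cap\Sigma))\lesssim\sqrt\delta$. It does not: consider a set obtained from $B_1(v)\cap\Sigma$ by adding a shell of thickness $\eps$ just outside $\partial B_1(v)$ (and removing a compensating piece inside). Then $w(E\triangle(B_1(v)\cap\Sigma))\sim\eps$ while $\int_E|T-(x-v)|^2w\sim\eps^3$ and $\int_E|T-(x-v)|w\sim\eps^2$, so Chebyshev from the $L^2$ bound only gives $\eps\lesssim\delta^{1/3}$ and from the $L^1$ bound $\eps\lesssim\delta^{1/4}$. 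The paper gets around this by using the trace inequality (not just Poincar\'e) to transfer the Hessian bound to a \emph{boundary} estimate $\int_{\bou E\cap\Sigma}|\,|x-x_0|-1\,|\,w\,d\Haus^{n-1}\lesssim\sqrt\delta$, and then a dedicated polar-slicing argument (\cref{prop:close_to_translated_ball}, relying also on the relative isoperimetric inequality \cref{cor:rel_isop_ball}) to convert that boundary estimate into a symmetric-difference bound. Your proposal is missing both the trace step and the slicing step.

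\textbf{Removing the translation.} You propose to conclude by proving a quantitative strict-maximality bound $w(B_1\cap\Sigma)-w(B_1(v)\cap\Sigma)\ge\eta(|v''|)$ with a power modulus. This cannot work in general, and the paper flags this explicitly: for $\Sigma=\{x>|y|\}$, $w=x$, $v''=(0,t)$, a direct computation shows $w(B_1\cap\Sigma)-w(B_1(v)\cap\Sigma)=O(t^2)$, so comparing to $\sqrt\delta$ gives only $|t|\lesssim\delta^{1/4}$. The volume-defect argument is linear in $|v''|$ only when $v''$ points \emph{into} $\overline\Sigma$ or $-\overline\Sigma$ along a constancy direction of $w$ (\cref{lem:linear_growth_ball}); in the remaining cases the paper needs two genuinely different mechanisms — the integral criterion $\int_{\hat Q}|w^{1/\alpha}(x+\xi)-w^{1/\alpha}(x)|\,dx\gtrsim|\pi_{\mathcal E}\xi|$ exploiting nonconstancy of $w$ (\cref{prop:magic_compact_set}), and a one-dimensional slicing argument upgrading the $L^1$ gradient estimate to an $L^\infty$ estimate on a fiber and then using the hard constraint $\nabla\varphi\in\overline{B_1\cap\Sigma}$ (\cref{prop:controlling_constant_directions}\ref{it:good_slicing}). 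Your ``second-variation-type analysis'' of the volume functional would at best recover a non-sharp exponent in the difficult case.

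A smaller remark: the schematic ``mass-conservation identity'' $\det(D^2u)\,w(\nabla u)=w(x)$ is not what the ABP coupling satisfies — it is a genuine inequality $\det(\nabla^2\varphi)\,w(\nabla\varphi)/w\le\big(\Per_w(E)/(Dw(E))\big)^D$, plus the lower bound $\int_E\det(\nabla^2\varphi)\,w(\nabla\varphi)\ge w(B_1\cap\Sigma)$ from surjectivity. Treating it as a Monge--Amp\`ere equation conflates this coupling with the optimal-transport map, which the paper is at pains to distinguish.
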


Notice that the \emph{stability constant} $C$ contained in \cref{thm:main} cannot depend only on $n$ and $\alpha$.
Indeed, if $n=2$, $\Sigma=\{x_2 > \eps \abs{x_1}\}$ and $w=x_2$, then, as $\eps\to0$, the constant $C$ of the statement must go to infinity (as any ball with center on $\partial\Sigma$ becomes \emph{almost} optimal).

Let us also point out that the exponent $\frac12$ is sharp, as often happens in quantitative stability estimates. We will prove this fact in \cref{rem:sharpness_exponent}.
    
\begin{remark}
    Our proof can be easily adapted (see \cref{thm:anisotropic_coupling}) to recover the stability result of \cite{FMP} for the anisotropic isoperimetric inequality in $\R^n$.
\end{remark}

\subsection{Sketch of the proof}
Our general plan to prove \cref{thm:main} is to make quantitative the proof of the weighted isoperimetric inequality contained in \cite{CRS}. 
However, several new difficulties (both conceptual and technical) arise.
In this description of the proof we will ignore all technical issues; some estimates are stated in a simplified form that is not exactly what we prove (but is morally equivalent). Only in this section\footnote{Later on, we will not consider the cone $\Sigma$ and the weight $w$ as fixed and so a constant that depends on them will not be absorbed by $\lesssim$ (see \cref{sec:notation}).}, the notation $A \lesssim B$ is equivalent to $A\le CB$, where $C$ is a constant that depends on $n,\alpha,\Sigma,w$.

Let us begin by briefly describing the proof of \cref{isop-thm} found in \cite{CRS}.
Given a smooth bounded connected set $E\subseteq\Sigma$ with $w(E)=w(B_1\cap\Sigma)=1$, consider the elliptic problem
\begin{equation}\label{eq:intro_elliptic}
    \begin{cases}
    \div(w\nabla u) = w\, \frac{\Per_w(E)}{w(E)} &\text{in $E$} \\
    \partial_\nu u = 1 &\text{on $\partial E\cap\Sigma$} \\
    \partial_\nu u = 0 &\text{on $\partial E\cap\partial\Sigma$}\fullstop
    \end{cases}
\end{equation}
A contact argument implies that, for any $\xi\in B_1\cap\Sigma$, there is $x\in E$ such that $\nabla u(x) = \xi$ and $\nabla^2 u(x)\ge 0$. Hence, denoting by $E'$ the set $\{x\in E:\ \nabla^2 u\ge 0,\ \nabla u\in B_1\cap\Sigma\}$, the area formula implies
\begin{align*}
    w(B_1\cap\Sigma) 
    &\le 
    \int_{E'}w(\nabla u)\det(\nabla^2 u)
    =
    \int_{E'}\frac{w(\nabla u)}w\det(\nabla^2 u)\, w \\
    &\le
    \int_{E'}\Big(\frac{\tr(\nabla^2 u) + \alpha\big(\frac{w(\nabla u)}w\big)^{\frac1\alpha}}{D}\Big)^D w
    \le
    \int_{E'}\Big(\frac{\lapl u + \frac{\nabla w}{w}\cdot\nabla u}{D}\Big)^D w \\
    &=
    \Big(\frac{\Per_w(E)}{D\,w(E)}\Big)^Dw(E') 
    \le
    \Big(\frac{\Per_w(E)}{D\,w(E)}\Big)^Dw(E)
    \comma
\end{align*}
where we have applied the weighted arithmetic-geometric mean inequality (recall that $D=n+\alpha$), then \cite[Lemma 5.1]{CRS} (which assumes only the concavity of $w^{\frac1\alpha}$) and finally the fact that $u$ satisfies \cref{eq:intro_elliptic}. Notice that the proven inequality is, up to rearrangement of the terms, the weighted isoperimetric inequality (see \cite{CRS} for the details).
What we have just sketched is the ABP method in a nutshell (in the context of isoperimetric inequalities, the method was introduced in \cite{Cabre2000,Cabre2008}; see \cite{Brendle2019} for a recent striking application of the method\footnote{The author proves a sharp isoperimetric inequality for minimal surfaces (up to codimension $2$) embedded in the Euclidean space.
We believe that our methods might be applied to show a quantitative version of Brendle's result.}).
However, it is very hard to exploit directly the function $u$ to obtain a stability result. The main obstructions being that it is impossible to control any derivative of $u$ (as everything depends wildly on $\partial E$) and that the proof \emph{sees} only $E'$ and not the whole set $E$.

Hence we take an appropriate \emph{restricted} convex envelope of $u$. 
Let $\varphi:\R^n\to\R$ be the convex function
\begin{equation}\label{eq:intro_kenvelope}
    \varphi(x) \defeq \sup\big\{a+\xi\cdot x:\ \xi\in \overline{B_1\cap\Sigma},\, a+\xi\cdot y\le u(y)\ \forall y\in E\big\} \fullstop
\end{equation}
Notice that $\varphi$ is simply the supremum of all affine functions with slope in $\overline{B_1\cap\Sigma}$ that are below $u$.
We go on to prove that $\varphi$ is much more well-behaved compared to $u$ itself (mainly because $\varphi$ is convex and controlling the Laplacian of a convex function is sufficient to control the whole Hessian). 
Precisely, we prove that $\varphi$ is $C^{1,1}$ (with bounds independent of the regularity of $\partial E$) and it holds $\nabla \varphi(\overline E) = \overline{B_1\cap\Sigma}$. Moreover, $\varphi$ retains (in a distilled form) the fact that $u$ satisfies \cref{eq:intro_elliptic}:
\begin{equation}\label{eq:intro_tmp432}
    \lapl\varphi + \alpha\left(\frac{w(\nabla\varphi)}{w}\right)^{\frac1\alpha}
        \le \frac{\Per_w(E)}{w(E)}\fullstop
\end{equation}
To understand the meaning of the last inequality, let us remark that, if $\nabla u\in\Sigma$, it holds
\begin{equation*}
    \lapl u + \alpha\left(\frac{w(\nabla u)}{w}\right)^{\frac1\alpha}
    \le 
    \lapl u + \frac{\nabla w}{w}\cdot \nabla u
    = w^{-1}\div(w\nabla u)
    = \frac{\Per_w(E)}{w(E)} \comma
\end{equation*}
where in the first inequality we used the concavity of $w^{\frac1\alpha}$ (see \cref{lem:5.1} below).
It turns out that \cref{eq:intro_tmp432} is enough for our purposes.
Namely, that the properties of the coupling $\varphi$ are still sufficient to prove the weighted isoperimetric inequality (it is sufficient to replace $u$ with $\varphi$ in the proof). Moreover, as simple byproducts of the proof, we obtain the following estimates:
\begin{align}
    &\int_E \abs{\nabla^2\varphi - 1}\, w 
    \lesssim \delta_w(E)^{\frac12} \comma \label{eq:intro_hessian}\\
    &\int_{\partial E\cap\Sigma} (1-\abs{\nabla\varphi})\,w\de\Haus^{n-1} 
    \lesssim \delta_w(E) \comma \label{eq:intro_boundary}\\
    &\int_E \abs*{w(\nabla\varphi)^{\frac1\alpha}-w^{\frac1\alpha}} 
    \lesssim \delta_w(E)^{\frac12} \label{eq:intro_translation}\fullstop
\end{align}
The estimate \cref{eq:intro_hessian} controls the $L^1$-norm of the differential of $\nabla\varphi$, \cref{eq:intro_boundary} implies that $\nabla\varphi(x)$ almost belongs to $\partial B_1\cap \Sigma$ when $x\in\partial E$. It is a bit harder to grasp the point of \cref{eq:intro_translation}, but it will be clear later on.

In \cite{FMP}, the authors have shown that if a set has a small isoperimetric deficit then, up to a small modification, it must enjoy a nontrivial Poincar\'e inequality and a nontrivial trace inequality. Surprisingly, their ideas can be easily adapted to our weighted setting and therefore we can assume that $E$ has nontrivial weighted Poincar\'e and trace inequalities. The adjective \emph{nontrivial} has to be understood as the fact that the constants of the inequalities do not depend on the set $E$ itself and can be bounded a priori. With these considerations, it is not hard to see that \cref{eq:intro_hessian} and \cref{eq:intro_boundary} imply the existence of $x_0\in\R^n$ such that
\begin{align}
    &\int_E \abs{\nabla\varphi(x) - (x-x_0)}\,w(x)\de x 
    \lesssim \delta_w(E)^{\frac12} \comma \label{eq:intro_tmp671}\\
    &\int_{\partial E\cap\Sigma} \abs{\abs{x-x_0}-1}\, w(x)\de \Haus^{n-1}(x) 
    \lesssim \delta_w(E)^{\frac12} \fullstop \label{eq:intro_tmp586}
\end{align}
The estimate \cref{eq:intro_tmp586} tells us that $\partial E\cap\Sigma$ is \emph{almost} contained in the boundary of $B_1(x_0)$, hence it is natural that from \cref{eq:intro_tmp586} we are able to deduce
\begin{equation}\label{eq:intro_tmp324}
    w(E\triangle (B_1(x_0)\cap\Sigma)) \lesssim \delta_w(E)^{\frac12} \fullstop
\end{equation}
Notice that a similar deduction is present also in \cite{FMP} and \cite{FI}, but their method to prove it does not work in our setting.

It remains to show that in \cref{eq:intro_tmp324} we can choose $x_0=(z, 0_{\R^{n-k}})$ (recall that both the cone and the weight are invariant on the first $k$ coordinates). 
This final step is far from being straightforward. 
Indeed, the interplay between the boundary of the cone and the weight makes it cumbersome to rule out that $E$ is close to a \emph{translated} ball, i.e., a ball $B_r(x_0)$ with $x_0$ not lying in $\R^k\times\{0_{\R^{n-k}}\}$.

Without loss of generality we can assume that $x_0=(0_{\R^k},\widetilde x_0)$. We will show that $\abs{\widetilde x_0}\lesssim \delta_w(E)^{\frac12}$, which is sufficient to conclude the proof.
Instead of giving the details of our strategy, we describe it in three different settings in order to show all the ideas without being lost in the technicalities. In all the three cases it holds $k=0,\,n=2$ (i.e., no lines are contained in $\Sigma$). We will denote the coordinates with $(x, y)$.

The first method works if $x_0$ belongs to $\overline\Sigma$ or $-\overline\Sigma$ and $w$ is constant along the direction of $x_0$.
The second method works if $x_0$ is not aligned with a constant direction of $w$.
The third method works if $x_0$ does not belong to $\Sigma$ nor $-\Sigma$ and $w$ is constant along the direction of $x_0$.
Since we have (morally) covered all possible cases, the proof is concluded.
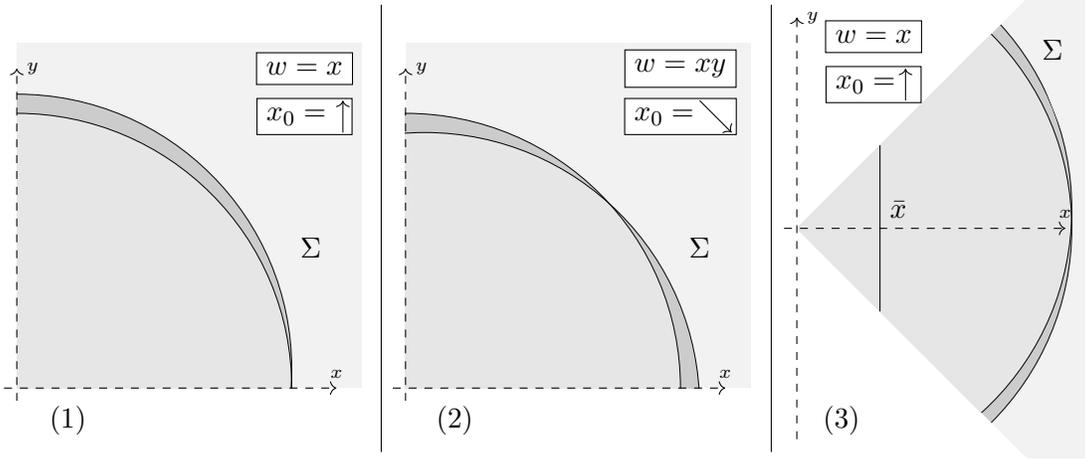
\begin{figure}[htb]
    \centering

\begin{tikzpicture}[scale=0.85]


\begin{scope}[xshift=0]
\fill[fill=white!95!black] (0,0) rectangle (5.4,5.4);

\begin{scope}
    \clip (0,0) rectangle (5,5);
    \clip (0,0) circle (4.3);
    \fill[white!80!black] (0,0) circle(4.3);
    \fill[white!90!black] (0,0.3) circle(4.3);
\end{scope}

\begin{scope}
    \clip (0,0) rectangle (5,5);
    \clip (0,0.3) circle (4.3);
    \fill[white!80!black] (0,0.3) circle(4.3);
    \fill[white!90!black] (0,0) circle(4.3);
\end{scope}

\begin{scope}
    \clip (0,0) rectangle (5,5);
    \draw[ultra thin] (0,0) circle(4.3);
    \draw[ultra thin] (0,0.3) circle(4.3);
\end{scope}

\node[draw,fill=white] at (4.5, 5) {$w=x$};
\node[draw,fill=white] at (4.5,4.25) {$x_0=\phantom{.}$};
\draw[->] (5.1, 4)--(5.1,4.5);

\node at (4.6, 2.2) {$\Sigma$};

\draw[->,dashed] (-0.2,0)--(5,0) node[above]{\tiny{$x$}};
\draw[->,dashed] (0,-0.2)--(0,5) node[right]{\tiny{$y$}};
\end{scope}


\begin{scope}[xshift=6.08cm]
\fill[fill=white!95!black] (0,0) rectangle (5.4,5.4);

\begin{scope}
    \clip (0,0) rectangle (5,5);
    \clip (0,0) circle (4.3);
    \fill[white!80!black] (0,0) circle(4.3);
    \fill[white!90!black] (0.3,-0.3) circle(4.3);
\end{scope}

\begin{scope}
    \clip (0,0) rectangle (5,5);
    \clip (0.3,-0.3) circle (4.3);
    \fill[white!80!black] (0.3,-0.3) circle(4.3);
    \fill[white!90!black] (0,0) circle(4.3);
\end{scope}

\begin{scope}
    \clip (0,0) rectangle (5,5);
    \draw[ultra thin] (0,0) circle(4.3);
    \draw[ultra thin] (0.3,-0.3) circle(4.3);
\end{scope}

\node[draw,fill=white] at (4.3, 5) {$w=xy$};
\node[draw,fill=white] at (4.3,4.25) {$x_0=\phantom{x.}$};
\draw[->] (4.6, 4.5)--(5.1,4);

\node at (4.6, 2.2) {$\Sigma$};

\draw[->,dashed] (-0.2,0)--(5,0) node[above]{\tiny{$x$}};
\draw[->,dashed] (0,-0.2)--(0,5) node[right]{\tiny{$y$}};

\end{scope}


\begin{scope}[xshift=12.2cm, yshift=2.5cm]
\begin{scope}
    \clip (0, -3.6) rectangle (4.5, 3.6);
    \fill[fill=white!95!black] (0,0) -- (5, 5) -- (5, -5) -- cycle;
\end{scope}

\begin{scope}
    \clip (0,0) -- (3.4, 3.4) -- (4.35, 1) -- (4.35, -1) -- (3.4, -3.4) -- cycle;
    \clip (0,0) circle (4.3);
    \fill[white!80!black] (0,0) circle(4.3);
    \fill[white!90!black] (0,0.3) circle(4.3);
\end{scope}

\begin{scope}
    \clip (0,0) -- (3.4, 3.4) -- (4.35, 1) -- (4.35, -1) -- (3.4, -3.4) -- cycle;
    \clip (0,0.3) circle (4.3);
    \fill[white!80!black] (0,0.3) circle(4.3);
    \fill[white!90!black] (0,0) circle(4.3);
\end{scope}

\begin{scope}
    \clip (0,0) -- (3.4, 3.4) -- (4.35, 1) -- (4.35, -1) -- (3.4, -3.4) -- cycle;
    \draw[ultra thin] (0,0) circle(4.3);
    \draw[ultra thin] (0,0.3) circle(4.3);
\end{scope}

\node[draw,fill=white] at (1.2, 3) {$w=x$};
\node[draw,fill=white] at (1.2,2.25) {$x_0=\phantom{.}$};
\draw[->] (1.7, 2.0)--(1.7,2.5);

\node at (4, 2.8) {$\Sigma$};

\draw[->,dashed] (-0.2,0)--(4.2,0) node[above]{\tiny{$x$}};
\draw[->,dashed] (0,-3.3)--(0,3.3) node[right]{\tiny{$y$}};

\draw (1.3,-1.3) -- (1.3,1.3);
\node[above right] at (1.3, 0) {$\bar x$};
\end{scope}

\node at (0.8,-0.5) {(1)};
\node at (6.85,-0.5) {(2)};
\node at (12.9,-0.5) {(3)};
\draw (5.7,-1) -- (5.7,6);
\draw (11.8,-1) -- (11.8,6);
\end{tikzpicture}

    \caption{Visual description of the three fundamental cases that have to be handled to prove $\protect\abs{\widetilde x_0}\lesssim \delta_w(E)^{\frac12}$.}
  \end{figure}
\begin{enumerate}[ref=(\arabic*)]
    \item \label{it:intro_ball_growth} Let $\Sigma\defeq \{x>0,\, y>0\}$, $w\defeq x$ and assume that $x_0=(0, t)$ for some $t\in\R$. It holds
    \begin{equation*}
        \abs*{w(B_1(x_0)\cap\Sigma)-w(B_1\cap\Sigma)} \gtrsim \abs{t} \comma
    \end{equation*}
    that, together with \cref{eq:intro_tmp324}, implies $t\lesssim \delta_w(E)^{\frac12}$ as desired (recall that $w(E)=w(B_1\cap\Sigma)$).
    \item \label{it:intro_nonconstant} Let $\Sigma\defeq \{x >0,\, y > 0\}$, $w\defeq xy$ and assume that $x_0=(t, -t)$ for some $t\in\R$. Joining \cref{eq:intro_translation} and \cref{eq:intro_tmp671}, we can prove
    \begin{equation*}
        \int_E \abs{w^{\frac1\alpha}(x-x_0)-w^{\frac1\alpha}} 
        \lesssim \delta_w(E)^{\frac12} \fullstop
    \end{equation*}
    Since the weight $w$ is nonconstant in the direction $(1,-1)$, it holds
    \begin{equation*}
        \int_E \abs{w^{\frac1\alpha}(x-x_0)-w^{\frac1\alpha}} \gtrsim t \fullstop
    \end{equation*}
    The last two inequalities imply $t\lesssim \delta_w(E)^{\frac12}$.
    \item \label{it:intro_constant_hard} Let $\Sigma\defeq \{x>\abs{y}\}$, $w\defeq x$ and assume that $x_0=(0, t)$ for some $t\in\R$. This is the hardest situation: the weight is constant along $x_0$ (hence \cref{eq:intro_translation} is useless) and the value of $w(B_1(x_0)\cap\Sigma)$ can be very close to $w(B_1\cap\Sigma)$ (so we cannot use their difference to control $t$ as we did in the first case). Thanks to \cref{eq:intro_hessian} and \cref{eq:intro_tmp671}, applying the fundamental theorem of calculus we can find $\frac14\le \overline x\le \frac12$ such that
    \begin{equation*}
        \abs{\nabla\varphi(\overline x, y) - ((\overline x, y)-x_0)} \lesssim \delta_w(E)^{\frac12}
    \end{equation*}
    for any $y\in\R$ such that $(\overline x, y)\in\Sigma$ (restricting our attention to a $1$-dimensional segment we have improved an $L^1$ estimate to an $L^\infty$ estimate and this is fundamental). 
    Since $\nabla\varphi\in\overline{B_1\cap\Sigma}$, we deduce
    \begin{equation*}
        \text{dist}(\Sigma, (\overline x, y)-x_0) \lesssim \delta_w(E)^{\frac12} \fullstop
    \end{equation*}
    Choosing $y=\overline x$ or $y=-\overline x$ in the latter estimate (depending on the sign of $t$), 
    we readily deduce $\abs{t}\lesssim \delta_w(E)^{\frac12}$.
\end{enumerate}

\subsection{Comparison between our coupling and the optimal transport map}
Let $E\subseteq\R^n$ be a bounded connected set with smooth boundary.
If we set $\Sigma=\R^n$ and $w=1$, the sketch above\footnote{It is sufficient to repeat the sketch, ignoring the weight and the cone, until \cref{eq:intro_tmp432}. Alternatively, see \cref{thm:anisotropic_coupling}.} provides a \emph{convex} function $\varphi:E\to\R$ such that $\nabla\varphi(E)=B_1$ (up to negligible sets) and 
\begin{equation*}
    \tr(\nabla^2\varphi) \le \frac{\Per(E)}{\abs{E}} \fullstop
\end{equation*}
On the other hand, let $\nabla\psi:E\to B_1$ be the optimal transport map between the two probability measures $\abs{E}^{-1}\restricts{\Leb^n}{E}$ and $\abs{B_1}^{-1}\restricts{\Leb^n}{B_1}$ with respect to the quadratic cost (see \cite{FMP} or \cite[23]{Villani2009} for the missing details). The function $\psi:E\to\R$ is \emph{convex} (\cite{Brenien1991}) and, by definition of transport map, it holds $\nabla\psi(E) = B_1$ (up to negligible sets) and
\begin{equation*}
    \det(\nabla^2\psi) = \frac{\abs{B_1}}{\abs{E}} \fullstop
\end{equation*}

Summing up, both $\varphi,\psi:E\to\R$ are convex functions such that $\nabla\varphi(E)=\nabla\psi(E) = B_1$ (up to negligible sets) and they both satisfy a condition on the Hessian.
Notice also that both $\varphi$ and $\psi$ encode some nontrivial global information about the set $E$, indeed both yield a one-line proof of the isoperimetric inequality:
\begin{align}
    &\abs{B_1}=\abs{\nabla\varphi(E)}
    \le \int_E \det(\nabla^2\varphi) 
    \le \int_E \left(\frac{\tr(\nabla^2\varphi)}{n}\right)^n 
    \le \frac{\Per(E)^n}{n^n\abs{E}^{n-1}} \comma \label{eq:proof_iso_abp}\\
    &n\abs{E}\left(\frac{\abs{B_1}}{\abs{E}}\right)^{\frac1n}
    = n\int_E \det(\nabla^2\psi)^{\frac1n}
    \le \int_E \div(\nabla\psi)
    = \int_{\partial E}\nabla\psi\cdot \nu_{\partial E} \de\Haus^{n-1}
    \le \Per(E) \fullstop \nonumber
\end{align}
Given their many similarities, it is natural to wonder whether the two functions $\varphi, \psi$ are two instances of the same phenomenon. 
We believe it would be very interesting to find a unifying framework that allows to treat the two functions (and perhaps other functions with in-between conditions on the Hessian) together.

\subsection{Acknowledgements}

F.G. and J.S. have received funding from the European Research Council under the Grant Agreement No. 721675 ``Regularity and Stability in Partial Differential Equations (RSPDE)''.

X.R. has received funding from the European Research Council under the Grant Agreement No. 801867 ``Regularity and singularities in elliptic PDE (EllipticPDE)''.

X.R. and J. S. were supported by grant MTM2017-84214-C2-1-P.

\subsection{Organization of the paper}
While describing the content of the various sections of the work we refer to the sketch of the proof given above.

After a section of notation and preliminaries, in \cref{sec:kenvelope} we study the $K$-envelope of a function (where $K$ is a compact convex set) obtaining a precise $C^{1,1}$-regularity result that might be of independent interest (and that we will use later on in the proof). When $K=\overline{B_1\cap\Sigma}$, the $K$-envelope of a generic function $u$ boils down to \cref{eq:intro_kenvelope} (thus the coupling $\varphi$ is a $\overline{B_1\cap\Sigma}$-envelope).
The construction of the coupling $\varphi$ and the proof of its properties are contained in \cref{sec:coupling}. 
We construct the coupling also in the case of the anisotropic (unweighted) perimeter, as it could be of independent interest.
The strategies adopted to deduce $\abs{\widetilde x_0}\le C \delta_w(E)^{\frac12}$ are implemented in \cref{sec:ruling_out_translations}.
The implication \cref{eq:intro_tmp586}$\implies$\cref{eq:intro_tmp324} is proven in \cref{sec:spherical_boundary_implies_ball}.
In \cref{sec:FMP}, we adapt \cite[Section 3]{FMP} to the weighted setting. Namely, we show that if a set has a small weighted isoperimetric deficit, then it enjoys nontrivial trace and Poincar\'e inequalities. 
The proofs of \cref{prop:uniqueness} and \cref{thm:main} are contained in \cref{sec:main_proof}.

This work has three appendices. The first one contains a quantitative version of the weighted inequality of arithmetic and geometric means; the second one is a collection of simple facts regarding $1$-homogeneous concave functions in a cone.
In the third and final appendix we prove that, in $\R^2$, an indecomposable set (see \cref{def:indecomposable}) can be approximated with \emph{connected} open sets.

\section{Notation}\label{sec:notation}
Since the statement of \cref{thm:main} is invariant under rescaling of the weight, we can assume that $w(B_1\cap\Sigma)=1$. 
Notice that, under this additional constraint, the isoperimetric constant is simply $c_*=D$ (recall that $D\defeq n+\alpha$).

Any constant that depends only on $n, \alpha$ is considered universal and can be hidden in the notation $\lesssim$. Precisely, the notation $A\lesssim B$ is equivalent to $A\le C B$ for a suitable constant $C=C(n,\alpha)$ that depends only on $n$ and $\alpha$. 
On the other hand, we will write explicitly constants that depend on the cone $\Sigma$ and the weight $w$.

We denote with $B_r(x_0)$ the open ball with center $x_0\in\R^n$ and radius $r>0$; when the center is the origin ($x_0=0_{\R^n}$) we simply write $B_r$. The $(n-1)$-dimensional sphere is denoted by $\partial B_1$ or $\S^{n-1}$.
The $n$-dimensional Lebesgue measure is denoted by $\Leb^n$, the $(n-1)$-dimensional Hausdorff measure is denoted by $\Haus^{n-1}$. The Lebesgue measure of a set $E\subseteq\R^n$ is denoted by $\abs{E}$. The identity matrix (whose size will always be $n\times n$) is denoted by $\id$.
The convex-hull of a set $E$, that is the smallest closed convex set that contains $E$, is denoted by $\conv(E)$.

\subsection{Assumptions}
Almost all the statements of this work require the same assumptions on the cone and the weight. For notational simplicity we state them here and reference them instead of repeating them in every statement.
We assume that $n,\alpha,\Sigma,w$ satisfy
\begin{equation}\label{eq:assumptions}\begin{cases}
    n \in \N \text{ and } \alpha \in \oo0\infty ;\\
    \Sigma \subseteq\R^n \text{ is an open convex cone with vertex at $0$;} \\
    \Sigma = \R^k\times\widetilde\Sigma \text{ where $0\le k < n$ and $\widetilde\Sigma\subseteq\R^{n-k}$ is an open convex cone \emph{containing no lines};} \\
    w:\overline\Sigma\to\co0\infty \text{ is a continuous nonnegative weight such that $w$ is $\alpha$-homogeneous, }\\ \text{$w^{\frac1\alpha}$ is concave, and $w(B_1\cap\Sigma)=1$.}
\end{cases}\end{equation}
Notice that $w$ may be $0$ on $\partial\Sigma$, but, since $w^{\frac1\alpha}$ is concave, it is strictly positive inside $\Sigma$.

A simple but useful result from \cite{CRS} is the following.

\begin{lemma}[\cite{CRS}]\label{lem:5.1}
Let $w$ be a positive homogeneous function of degree $\alpha>0$ in an open cone $\Sigma\subseteq\R^n$.
Then, the following conditions are equivalent:
\begin{itemize}
\item The function $w^{1/\alpha}$ is concave in $\Sigma$.

\item For each $x, z\in\Sigma$, the following inequality holds:
\[\alpha\left(\frac{w(z)}{w(x)}\right)^{1/\alpha}\leq \frac{\nabla w(x)\cdot z}{w(x)}.\]

\end{itemize}
\end{lemma}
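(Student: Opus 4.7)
The plan is to reduce both statements to a single concavity condition on the $1$-homogeneous function $v \defeq w^{1/\alpha}$, and then invoke the standard supporting-hyperplane characterization of concavity.

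Set $v \defeq w^{1/\alpha}$, so that $w = v^{\alpha}$ and, wherever $w$ is differentiable (equivalently $v$ is differentiable, since $w>0$ in $\Sigma$), $\nabla w = \alpha\, v^{\alpha-1}\nabla v$. Since $w$ is $\alpha$-homogeneous, $v$ is $1$-homogeneous, so Euler's identity gives $\nabla v(x)\cdot x = v(x)$ for every $x\in\Sigma$. Substituting these identities, one computes
\begin{equation*}
\frac{\nabla w(x)\cdot z}{w(x)} \;=\; \frac{\alpha\, v(x)^{\alpha-1}\,\nabla v(x)\cdot z}{v(x)^{\alpha}} \;=\; \alpha\,\frac{\nabla v(x)\cdot z}{v(x)},
\qquad
\alpha\left(\frac{w(z)}{w(x)}\right)^{1/\alpha} \;=\; \alpha\,\frac{v(z)}{v(x)}.
\end{equation*}
Thus the pointwise inequality in the second bullet is equivalent to
\begin{equation*}
v(z) \;\le\; \nabla v(x)\cdot z \qquad \text{for all } x,z\in\Sigma. \tag{$\ast$}
\end{equation*}

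Now I would show that $(\ast)$ is equivalent to the concavity of $v$. Using Euler's identity $\nabla v(x)\cdot x = v(x)$, inequality $(\ast)$ rewrites as
\begin{equation*}
v(z) \;\le\; v(x) + \nabla v(x)\cdot(z-x) \qquad \text{for all } x,z\in\Sigma,
\end{equation*}
which is precisely the supporting-hyperplane characterization of concavity for the (a.e.\ differentiable) function $v$ on the convex open set $\Sigma$. Conversely, if $v$ is concave, the same supporting-hyperplane inequality holds at every point of differentiability, and then Euler's identity gives $(\ast)$ back.

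The only point that requires a small amount of care is the regularity needed to write $\nabla w(x)$ and $\nabla v(x)$: a concave function on an open convex set is differentiable a.e., and if one wants the inequality to hold at every point of $\Sigma$, one replaces $\nabla v(x)$ by any supergradient of $v$ at $x$ (which still satisfies $\xi\cdot x = v(x)$ by $1$-homogeneity, cf.\ the preliminaries on $1$-homogeneous concave functions mentioned in the introduction). No step involves a real obstacle; the content is essentially the observation that $1$-homogeneity turns the tangent-plane inequality for $v$ into the linear inequality $(\ast)$.
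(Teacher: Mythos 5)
The paper does not give its own proof of this lemma; it is cited verbatim from \cite{CRS}. Your argument is correct and is essentially the same as the one in \cite{CRS}: set $v=w^{1/\alpha}$, note $v$ is $1$-homogeneous so Euler's identity gives $\nabla v(x)\cdot x=v(x)$, observe that the displayed inequality simplifies to $v(z)\le\nabla v(x)\cdot z$, and then recognize this (via Euler) as the tangent-plane characterization of concavity. Your remark about points of nondifferentiability and supergradients is a sensible way to make the statement precise in full generality; in the applications in this paper the weight is smoothed first (cf.\ \cref{lem:smoothing_weight}), so the differentiable case is the one actually used.
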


We will use such result several times throughout the paper.

\subsection{Set of finite (weighted) perimeter and functions of bounded variation}
Let us recall some basic facts about sets of finite perimeter. 
All the definitions and results we are going to state can be found in the monograph \cite{maggi2012}.

A measurable set $E\subseteq\R^n$ is a set of finite perimeter if its perimeter
\begin{equation*}
    \Per(E) \defeq \sup \left\{\int_E \div(X(x))\de x:\ 
    X\in C^{\infty}_c(\R^n,\R^n)\comma\, \norm{X}_{\infty} \le 1\right\}
\end{equation*}
is finite. 
There is also a \emph{localized} version of the perimeter; given an open set $\Omega\subseteq\R^n$, the relative perimeter of $E$ inside $\Omega$ is
\begin{equation*}
    \Per(E,\Omega) \defeq \sup \left\{\int_E \div(X(x))\de x:\ 
    X\in C^{\infty}_c(\Omega,\R^n)\comma\, \norm{X}_{\infty} \le 1\right\} \fullstop
\end{equation*}
A set of (locally) finite perimeter admits a measure-theoretic notion of boundary (the \emph{reduced boundary}), which is a $(n-1)$-rectifiable set that we denote by $\bou E$, and for each point $x\in\bou E$ an \emph{outer normal vector} $\nu_{\bou E}(x)\in\S^{N-1}$ is defined. For any $X\in C^\infty_c(\R^n, \R^n)$, it holds
\[
\int_E\div(X(x))\de x =\int_{\bou E} X \cdot \nu_{\bou E}\de\H^{n-1}\,.
\]
In addition, $\Per(E)=\Haus^{n-1}(\bou E)$.
The reduced boundary is, up to $\Haus^{n-1}$-negligible sets, the set of points in $\R^n$ where $E$ has density $\frac12$.
We denote with $E^{(1)}$ the set of points in $\R^n$ where $E$ has density $1$.

Let us now define the \emph{weighted} perimeter.
Given a measurable set $E\subseteq\Sigma$, its $w$-perimeter in $\Sigma$ is defined as
\begin{equation*}
    \Per_w(E) \defeq \sup \left\{\int_E \div(X(x)w(x))\de x:\ 
    X\in C^{\infty}_c(\Sigma,\R^n)\comma\, \norm{X}_{\infty} \le 1\right\} \fullstop
\end{equation*}
It is not hard to prove that if $\Per_w(E)<\infty$, then for any $\Omega\compactsubset \Sigma$ it holds $\Per(E, \Omega)<\infty$. In particular $\bou E$ is well-defined whenever $\Per_w(E)<\infty$ and it holds
\begin{equation*}
    \Per_w(E) = \int_{\bou E\cap\Sigma} w(x)\de\Haus^{n-1}(x) \fullstop
\end{equation*}
Let us define $\Haus_w^{n-1}\defeq w\restricts{\Haus}{\Sigma}^{n-1}$, so that $\Per_w(E) = \Haus_w^{n-1}(\bou E)$.

Let us now move our attention to functions of bounded variation.
All the definitions and results we are going to state can be found in the monograph \cite{ambrosio-fusco-pallara}.

A measurable function $f:\R^n\to\R$ is of bounded variation if its distributional gradient is a vector-valued measure, that we denote with\footnote{The distributional gradient of a function of bounded variation is usually denoted with $D f$, and $\nabla f$ is used to identify the absolutely continuous part of $D f$. We use the notation $\diff f$ for the distributional gradient to avoid confusion, indeed the letter $D$ is the \emph{effective dimension} $D=n+\alpha$.} $\diff f$. The set of functions of bounded variation is denoted by $BV(\R^n)$.

Given $f\in BV(\R^n)$ and $X\in C^{\infty}_c(\R^n,\R^n)$, it holds
\begin{equation}\label{eq:intro_div_theorem}
    \int_{\R^n} f(x)\div(X(x))\de x = 
    \int_{\R^n} X(x) \de \diff f(x) \fullstop
\end{equation}
Notice that a measurable set $E\subseteq\R^n$ is a set of finite perimeter if and only if its characteristic function $\chi_E$ has bounded variation.
Moreover,  the following relation between the reduced boundary of $E$ and the distributional gradient of the characteristic function,
\begin{equation*}
    \diff \chi_E = -\nu_{\bou E}\, \restricts{\Haus}{\bou E}^{n-1} \comma
\end{equation*}
holds.

\section{Regularity of the \texorpdfstring{$K$}{K}-envelope}\label{sec:kenvelope}
Let us start with the definition of the main character of this section: the $K$-envelope of a function.

\begin{definition}
Let $K\subseteq\R^n$ be a compact, convex set, $\Omega\subseteq\R^n$ be a bounded open set, and $u\in C^0(\overline\Omega)\cap C^2(\Omega)$.
   We define the \emph{$K$-envelope of $u$} as the function $\overline u^K:\R^n\to\R$ given by
    \begin{equation*}
        \overline u^K(x) \defeq \sup\left\{a + \xi\cdot x:\ \xi\in K,\ a+\xi\cdot y\le u(y) \ \forall y\in\overline\Omega\right\} \fullstop
    \end{equation*} 
\end{definition}
\begin{remark}
    The $K$-envelope is, by definition, the supremum of all affine functions with slope in $K$ that are below $u$. Notice that $\overline u^K(x)<\infty$ because $K$ is compact.
\end{remark}
\begin{remark}
    For our purposes, only the case $K=\overline{B_1\cap\Sigma}$ is important. Nonetheless, since it is not much easier to handle only that case and because the results in this section might be of independent interest, we decided to drop the assumption $K=\overline{B_1\cap\Sigma}$ and work with a generic compact convex set.
\end{remark}

The goal of this section is to obtain some precise $C^{1,1}$ bounds on $\overline u^K$. We are interested in showing that the Hessian of $\overline u^K$ is controlled (as a symmetric matrix) by a suitable combination of Hessians (in different points) of the original function $u$.
Similar results are well-known for the classical convex envelope (see for instance \cite{FigalliDePhilippis2015} and the references therein). Nonetheless, we could not find any work on the $K$-envelope. 

The fundamental difficulty arising when considering the $K$-envelope (compared to the convex envelope) is that at many points it holds $\nabla \overline u^K \in \partial K$ and there the standard approaches fail.
This shortcoming can be solved neatly introducing the notion of normal cone.
Let us define the normal cone and the subdifferential (see \cite{rockafellar1970}) and obtain the first basic results about the $K$-envelope.

\begin{definition}[Normal cone]
    Given a compact, convex set $K\subseteq\R^n$, for any $\xi\in K$, the normal cone $N(\xi, K)$ of $K$ at $\xi$ is defined as
    \begin{equation*}
        N(\xi, K) \defeq \left\{v\in\R^n: \,v\cdot(\xi'-\xi) \le 0 \ \textrm{ for all }\ \xi'\in K\right\} \fullstop
    \end{equation*}
    Notice that the normal cone is most interesting for boundary points $\xi\in \partial K$; in the interior one simply has $N(\xi,K)=\{0\}$.
\end{definition}
\begin{definition}[Subdifferential]
    Given a convex function $\varphi:\R^n\to\R$, its subdifferential $\partial \varphi(x)$ at the point $x\in\R^n$ is defined as
    \begin{equation*}
        \partial \varphi(x)\defeq \left\{\xi\in\R^n:\ \varphi(y)\ge \varphi(x) + \xi\cdot(y-x) \ \forall y\in\R^n\right\} \fullstop
    \end{equation*}
\end{definition}

\begin{lemma}\label{lem:subdifferential_cap_K}
Let $K\subseteq\R^n$ be a compact, convex set, $\Omega\subseteq\R^n$ be a bounded open set, and $u\in C^0(\overline\Omega)\cap C^2(\Omega)$.
    The function $\overline u^K:\R^n\to\R$ is convex and at any point $x\in\R^n$ it holds
    $\partial \overline u^K(x)\cap K\not=\emptyset$.
\end{lemma}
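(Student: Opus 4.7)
The plan has two parts: (i) convexity, which is essentially free, and (ii) showing the supremum in the definition of $\overline u^K$ is attained at each point, which via standard subdifferential arguments gives the desired intersection property.

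For convexity, I would simply observe that $\overline u^K$ is the pointwise supremum of a family of affine functions (one for each admissible pair $(a,\xi)$). Any such supremum is convex (and lower semicontinuous), so the first half of the statement is immediate — and in fact, because $\overline u^K$ will be shown to be finite everywhere, it is also continuous.

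For the main assertion, fix $x\in\R^n$. I would first check that $\overline u^K$ is everywhere finite. Pick any $\xi_0\in K$ (nonempty since $K$ is a nonempty compact convex set) and set $a_0\defeq \min_{y\in\overline\Omega}\bigl(u(y)-\xi_0\cdot y\bigr)$, which is finite by compactness of $\overline\Omega$ and continuity of $u$; the pair $(a_0,\xi_0)$ is admissible, so $\overline u^K(x)\ge a_0+\xi_0\cdot x>-\infty$. On the other hand, fixing any $y_0\in\overline\Omega$, for every admissible pair one has $a\le u(y_0)-\xi\cdot y_0$, whence $a+\xi\cdot x\le u(y_0)+\xi\cdot(x-y_0)$. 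Since $K$ is compact this is bounded uniformly in $\xi\in K$, giving a finite upper bound.

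The key step is then to show the supremum is attained. Take a maximizing sequence $(a_k,\xi_k)$ of admissible pairs with $a_k+\xi_k\cdot x\to \overline u^K(x)$. By compactness of $K$ we may assume $\xi_k\to\xi_*\in K$; then $a_k=(a_k+\xi_k\cdot x)-\xi_k\cdot x$ converges to some $a_*\in\R$. Passing to the limit in the admissibility condition $a_k+\xi_k\cdot y\le u(y)$ (valid for all $y\in\overline\Omega$ by continuity in $(a,\xi)$) yields $a_*+\xi_*\cdot y\le u(y)$ on $\overline\Omega$ and $a_*+\xi_*\cdot x=\overline u^K(x)$.

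Finally, I would conclude by reading off the subdifferential inclusion: for every $y\in\R^n$, the pair $(a_*,\xi_*)$ is still admissible, so by definition of $\overline u^K$ we have
\[
\overline u^K(y)\ge a_*+\xi_*\cdot y=\overline u^K(x)+\xi_*\cdot(y-x),
\]
which means exactly $\xi_*\in\partial\overline u^K(x)$. Since $\xi_*\in K$, we get $\partial\overline u^K(x)\cap K\neq\emptyset$. There is no serious obstacle here; the only point requiring a little care is verifying that admissibility passes to the limit, but this is immediate from the continuity of $u$ on the compact set $\overline\Omega$.
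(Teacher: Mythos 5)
Your proof is correct and follows essentially the same route as the paper's: convexity is immediate as a supremum of affine functions, and the subdifferential statement is obtained by taking a maximizing sequence $(a_k,\xi_k)$, extracting $\xi_k\to\xi_*\in K$ by compactness of $K$, deducing $a_k\to a_*$, and passing to the limit in the admissibility constraint. The only addition you make is spelling out the finiteness of $\overline u^K$ (which the paper disposes of in a preceding remark), and this is a harmless piece of extra care.
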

\begin{proof}
    The convexity follows directly from the definition. 
    For the second part of the statement, fix $(a_i)_{i\in\N}\subseteq\R$ and $(\xi_i)_{i\in\N}\subseteq K$ such that $\overline u^K(x) = \lim_{i\to\infty} a_i+\xi_i\cdot x$ and $a_i+\xi_i\cdot y \le u(y)$ for any $y\in\overline\Omega$. 
    Since $K$ is compact we can assume that $\xi_i\to\xi\in K$ and as a consequence it must hold $a_i\to a\in\R$. 
    Hence $\overline u^K(x) = a + \xi\cdot x$ and $a+\xi\cdot y\le u(y)$ for any $y\in\overline\Omega$;
    in particular this implies that $\overline u^K(y)\ge a + \xi\cdot y$ for any $y\in\R^n$ and we deduce $\xi\in\partial \overline u^K(x)$.
\end{proof}

\begin{lemma}\label{lem:convex_duality101}
    Let $A, B\subseteq\R^n$ be two nonempty closed convex sets such that for any $b\in B$ there is $a\in A$ such that $a\cdot b\le 0$. Then there is $\overline a\in A$ such that $\overline a\cdot b\le 0$ for any $b\in B$.
\end{lemma}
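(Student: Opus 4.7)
The statement is a finite-dimensional convex-duality fact. I introduce the closed convex cone
\[
B^\star := \{a \in \R^n : a\cdot b \le 0 \text{ for every } b \in B\},
\]
which contains $0$, so that the conclusion is equivalent to $A \cap B^\star \ne \emptyset$. I would argue by contradiction and assume $A \cap B^\star = \emptyset$.

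Since $A$ and $B^\star$ are disjoint, nonempty, closed and convex in $\R^n$, the Hahn--Banach separation theorem produces $v \in \R^n \setminus \{0\}$ and $c \in \R$ with $a \cdot v \ge c$ for every $a \in A$ and $p \cdot v \le c$ for every $p \in B^\star$. Evaluating at $p = 0$ gives $c \ge 0$, and the fact that $B^\star$ is closed under nonnegative scaling upgrades the second inequality to $p \cdot v \le 0$ for every $p \in B^\star$, since otherwise $\lambda p \cdot v$ would be unbounded above on $B^\star$. Hence $v$ lies in the bipolar cone $(B^\star)^\star$, which by the bipolar theorem equals $\overline{\operatorname{cone}(B)}$. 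Because $v \ne 0$, there exist sequences $t_k \ge 0$ and $b_k \in B$ with $t_k b_k \to v$; applying the hypothesis to each $b_k$ produces $a_k \in A$ with $a_k \cdot (t_k b_k) \le 0$. Whenever the $\{a_k\}$ admit a bounded subsequence, a cluster point $a_\infty \in A$ satisfies $a_\infty \cdot v \le 0$, contradicting $a_\infty \cdot v \ge c$ as soon as the separation is strict (i.e.\ $c > 0$). This closes the argument in the case where $A$ is bounded, since bounded $A$ automatically gives strict separation from the closed $B^\star$; in particular it covers the setting the paper actually needs, where the lemma is applied with $A$ contained in the compact convex set $K$.

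The main obstacle is the general unbounded case, in which the $a_k$ may diverge and the separation may be only weak ($c = 0$). To handle it I would perform a recession-cone analysis: a divergent subsequence produces $u := \lim a_k / |a_k| \in 0^+ A$ satisfying $u \cdot v \le 0$ (from dividing $a_k \cdot (t_k b_k) \le 0$ by $|a_k|$) and $u \cdot v \ge 0$ (from the separation applied along the ray $a_0 + \lambda u \in A$ as $\lambda \to +\infty$), hence $u \cdot v = 0$. Passing to the quotient $\R^n / \R u$ preserves the separating vector $v$ and strictly shrinks the recession cone of $A$; iterating finitely many times exhausts the recession directions and reduces to the bounded-case argument above. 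I expect this recession bookkeeping to be the only genuinely technical step of the proof.
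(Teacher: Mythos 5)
Your approach is the same as the paper's: argue by contradiction from $A\cap B^\star=\emptyset$, separate, use the bipolar theorem to place the separating vector $v$ in $\overline{\operatorname{cone}(B)}$, and feed the hypothesis back in to contradict $a\cdot v\ge c$. You are, however, noticeably more careful than the paper's terse argument on two genuine technical points: first, that $B^{\star\star}=\overline{\operatorname{cone}(B)}$ can strictly contain $\operatorname{cone}(B)$, so $v$ need not be rescalable to an element of $B$ (your passage to a sequence $t_k b_k\to v$ handles this correctly); second, that separation of two disjoint closed convex sets a priori only yields $c\ge 0$, while the final contradiction needs $c>0$. Your compactness argument resolves both points cleanly when $A$ is bounded, and you are right that this is the only case the paper ever uses: in the application one has $A=x-\conv(S_\xi)$ with $S_\xi\subseteq\overline\Omega$ compact, hence $A$ compact (note it is $A$, not $K$, whose boundedness matters there).

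The gap is the unbounded case, which the lemma as stated allows. Your recession-cone sketch is the right instinct, but as written it does not obviously close: a one-sided recession direction $u\in 0^+A$ need not satisfy $-u\in 0^+A$, so the image of $A$ in the quotient $\R^n/\R u$ need not be closed, and you do not explain how $B$, the cone $B^\star$, and the hypothesis descend to the quotient. These details would have to be worked out before the proposal proves the lemma in the stated generality. For the purposes of this paper, though, the bounded case you do prove suffices, and it does so with more explicit justification than the paper's own proof.
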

\begin{proof}
    Given a subset $S\subseteq\R^n$, let $S^\circ$ be its polar cone, that is
    \begin{equation*}
        S^\circ\defeq \{x\in\R^n:\ x\cdot s \le 0 \text{ for any $s\in S$}\} \fullstop
    \end{equation*}
    We want to prove that $A\cap B^{\circ}\not=\emptyset$.
    Let us assume by contradiction that $A\cap B^{\circ}=\emptyset$. Then, since $A$ is a closed convex set and $B^{\circ}$ is a closed convex cone, we can find $v\in B^{\circ\circ}$ such that $a\cdot v > 0$ for any $a\in A$. Since $B^{\circ\circ}$ is the  cone $\{\lambda b:\, b\in B, \lambda\ge 0\}$ generated by $B$ (see \cite[Theorem 14.1]{rockafellar1970}), up to rescaling we can assume that $v\in B$. Thus we have reached a contradiction as we are assuming the existence of $a\in A$ such that $a\cdot v\le 0$.
\end{proof}

\begin{lemma}\label{lem:preparation_convex_K}
Let $K\subseteq\R^n$ be a compact, convex set, $\Omega\subseteq\R^n$ be a bounded open set, and $u\in C^0(\overline\Omega)\cap C^2(\Omega)$.
    Given $\xi\in K$, let $S_\xi\defeq \argmin_{x\in\overline\Omega} \{u(x)-\xi\cdot x\}$. Then we have:
    \begin{enumerate}[label=(\alph*)]
        \item For any $x_\xi\in S_\xi$, it holds $\overline u^K(x_\xi)=u(x_\xi)$ and $\xi\in\partial\overline u^K(x_\xi)$. \label{it:contact_set}
        \item Given $x\in\R^n$, if $\xi\in\partial\overline u^K(x)$ then there is $v\in N(\xi, K)$ such that $x-v\in\conv(S_\xi)$. \label{it:tangent_set}
    \end{enumerate}
\end{lemma}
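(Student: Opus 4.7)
For part (a), I would unravel the definition directly. Setting $a := u(x_\xi) - \xi\cdot x_\xi$, the fact that $x_\xi\in S_\xi$ is exactly the statement that $a + \xi\cdot y \le u(y)$ for every $y\in\overline\Omega$, so the affine function $z\mapsto a + \xi\cdot z$ is admissible in the supremum defining $\overline u^K$. This gives $\overline u^K(x_\xi)\ge a+\xi\cdot x_\xi = u(x_\xi)$; the reverse inequality is immediate, since any admissible competitor evaluated at $x_\xi\in\overline\Omega$ is $\le u(x_\xi)$. Once we know $\overline u^K(x_\xi) = a+\xi\cdot x_\xi$, the global bound $\overline u^K(z)\ge a+\xi\cdot z$ for all $z\in\R^n$ (again by admissibility) rearranges to $\xi\in\partial\overline u^K(x_\xi)$.

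For part (b), my plan is to apply \cref{lem:convex_duality101} with
\[
A := x - \conv(S_\xi), \qquad B := K - \xi,
\]
both of which are nonempty closed convex subsets of $\R^n$. The lemma's conclusion produces $\bar a\in A$ with $\bar a\cdot(\xi' - \xi)\le 0$ for every $\xi'\in K$, which is precisely $\bar a\in N(\xi, K)$; writing $\bar a = x - y$ with $y\in\conv(S_\xi)$ and setting $v := \bar a$ concludes the proof.

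The substantive work is verifying the hypothesis of \cref{lem:convex_duality101}: for each $\xi'\in K$ I must find $y\in S_\xi$ with $(x - y)\cdot(\xi' - \xi)\le 0$. The key tool will be the Legendre-type representation
\[
\overline u^K(z) = \sup_{\eta\in K}\bigl[\eta\cdot z - u^*(\eta)\bigr], \qquad u^*(\eta) := \sup_{y\in\overline\Omega}\bigl[\eta\cdot y - u(y)\bigr],
\]
with the outer supremum attained since $u^*$ is continuous and $K$ is compact. From $\xi\in\partial\overline u^K(x)$ I will first deduce that $\xi\in K$, by comparing the linear growth of $\overline u^K$ along rays with the support function of $K$. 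The affine function $z\mapsto \overline u^K(x)+\xi\cdot(z-x)$ then lies below $\overline u^K\le u$ on $\overline\Omega$ and has slope in $K$, hence $\xi$ realizes the above supremum and $\overline u^K(x) = \xi\cdot x - u^*(\xi)$. Comparing with $\overline u^K(x)\ge \xi'\cdot x - u^*(\xi')$ for any $\xi'\in K$ gives the first-order inequality $u^*(\xi') - u^*(\xi) \ge (\xi' - \xi)\cdot x$. Using the compactness of $\overline\Omega$ and continuity of $u$, a standard envelope-theorem argument identifies the right directional derivative of $u^*$ at $\xi$ in direction $b := \xi'-\xi$ as $\max_{y\in S_\xi} b\cdot y$; applying this along the segment $\xi+tb\in K$ ($t\in(0,1]$), dividing by $t$ and letting $t\to 0^+$ yields $\max_{y\in S_\xi} b\cdot y \ge b\cdot x$, and any maximizer $y\in S_\xi$ satisfies $(x-y)\cdot b\le 0$.

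The main obstacle will be the envelope-theorem computation, where the upper bound on the difference quotient is the subtle half: it requires extracting a convergent subsequence of approximate maximizers of $\eta\cdot y - u(y)$ as $\eta\to\xi$ and checking that the limit lies in $S_\xi$, which is possible thanks to the compactness of $\overline\Omega$ and the continuity of $u$. The role of \cref{lem:convex_duality101} is precisely to upgrade the resulting family of pointwise witnesses (one $y$ per $\xi'\in K$) to the single uniform $\bar a\in A\cap N(\xi, K)$ that the statement demands.
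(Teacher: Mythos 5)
Your argument is correct, and part~(a) matches the paper's proof verbatim in substance. For part~(b), however, you take a genuinely different route. The paper's strategy is shorter and more self-contained: fixing $\xi'\in K$ and $\eps\in(0,1)$, pick $y_{\xi',\eps}\in S_{\xi+\eps(\xi'-\xi)}$; part~(a) gives $\xi+\eps(\xi'-\xi)\in\partial\overline u^K(y_{\xi',\eps})$, and monotonicity of the subdifferential of the convex function $\overline u^K$ immediately yields $(x-y_{\xi',\eps})\cdot(\xi'-\xi)\le 0$; letting $\eps\to 0$ along a convergent subsequence (compactness of $\overline\Omega$ plus continuity of $u$) produces $y_{\xi'}\in S_\xi$ with the same inequality, after which \cref{lem:convex_duality101} is applied with $A=x-\conv(S_\xi)$, $B=K-\xi$ exactly as you do. You instead pass through the Legendre--Fenchel representation $\overline u^K(z)=\sup_{\eta\in K}[\eta\cdot z-u^*(\eta)]$, show that $\xi$ realizes this sup at $x$, derive the first-order inequality $u^*(\xi')-u^*(\xi)\ge(\xi'-\xi)\cdot x$, and then invoke Danskin's theorem to identify the one-sided directional derivative of $u^*$. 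The two are secretly the same perturbation argument: your ``approximate maximizer subsequence'' in the Danskin step is precisely the paper's sequence $y_{\xi',\eps}$, and the paper's use of subdifferential monotonicity is a slick way of encoding the first-order inequality without ever naming $u^*$. What your route buys is conceptual transparency (the dual picture is made explicit, and the hard direction of Danskin is clearly isolated); what it costs is an extra layer of machinery and the need to verify that the supremum in the Legendre representation is attained. One small remark: you say you will ``first deduce that $\xi\in K$'' from the growth of $\overline u^K$, but this is unnecessary, since the lemma's hypothesis already quantifies over $\xi\in K$ --- you can drop that step.
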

\begin{proof}
    Let us start proving \cref{it:contact_set}. 
    Since $x_\xi$ minimizes $u(x)-\xi\cdot x$, for any $y\in\overline\Omega$ it holds
    \begin{equation*}
        u(y)\ge u(x_\xi) + \xi\cdot(y-x_\xi)
    \end{equation*}
    and thus, by definition of $K$-envelope, we deduce
    \begin{equation}\label{eq:local3}
        u(y)\ge \overline u^K(y) \ge u(x_\xi) + \xi\cdot(y-x_\xi) \fullstop
    \end{equation}
    Setting $y=x_\xi$, \cref{eq:local3} implies $\overline u^K(x_\xi)=u(x_\xi)$, thus $\xi\in\partial\overline u^K(x_\xi)$.
    
    Let us now move to the proof of \cref{it:tangent_set}.
    Fix $\xi'\in K$ and, for any $0<\eps<1$, choose $y_{\xi',\eps}\in S_{\xi+\eps(\xi'-\xi)}$. 
    Thanks to \cref{it:contact_set}, we know that
    $\xi+\eps\cdot(\xi'-\xi)\in\partial\overline u^K(y_{\xi',\eps})$ and, since the subdifferential is a monotone operator (see \cite[§24]{rockafellar1970}), this implies
    \begin{equation}\label{eq:local1}
        (x-y_{\xi',\eps})\cdot(\xi'-\xi) \le 0 \fullstop
    \end{equation}
    Thanks to the compactness of $\overline\Omega$, up to subsequence, it holds $y_{\xi',\eps}\to y_{\xi'}\in S_\xi$ as $\eps\to 0$ (we are using that $\xi+\eps(\xi'-\xi)\to\xi$ as $\eps\to0$). Thus, passing to the limit in~\cref{eq:local1}, we deduce
    \begin{equation}\label{eq:local2}
        (x- y_{\xi'})\cdot(\xi'-\xi) \le 0 \fullstop
    \end{equation}
    We have shown that for any $\xi'\in K$ there exists $ y_{\xi'}\in S_\xi$ such that \cref{eq:local2} holds. 
    The conclusion follows from \cref{lem:convex_duality101} with $A=x-\conv(S_\xi)$ and $B=K-\xi$.
\end{proof}

We now have all the tools to prove the central result of this section. The idea is the following. Fix $x\in\R^n$ and consider a hyperplane touching $\overline u^K$ from below at $x$. This hyperplane touches $u$ from below at, say, $x_1, \dots, x_m$. We prove that (in a rather strong sense) $\overline u^K$ admits an Hessian at $x$ and this Hessian is controlled by a convex combination of the Hessian of $u$ at $x_1,\dots, x_m$. More precisely, $\nabla^2\overline u^K(x)$ belongs to $H(x, \nabla\overline u^K(x), K)$, which is defined as follows:
\begin{definition}
Let $K\subseteq\R^n$ be a compact, convex set, $\Omega\subseteq\R^n$ be a bounded open set, and $u\in C^0(\overline\Omega)\cap C^2(\Omega)$.
    Given $x\in\R^n$ and $\xi\in K$, let us define the family of matrices (the definition of $S_\xi$ is contained in the statement of \cref{lem:preparation_convex_K})
    \begin{equation*}
        H(x, \xi, K) \defeq\left\{
        \sum_{i=1}^m \lambda_i \nabla^2 u(s_i):\ \ 
        \begin{aligned}
        & 1\le m \le n+1 \\
        & \lambda_i\ge 0,\ \sum \lambda_i = 1 \\
        & s_i\in S_\xi \cap \Omega \\
        & x-\sum \lambda_i s_i \in N(\xi, K)
        \end{aligned}
        \right\} \fullstop
    \end{equation*}
    Notice that $H(x,\xi,K)$ is convex by definition and it is also closed if $S_\xi\subseteq\Omega$ (since $S_\xi$ is closed by definition).
\end{definition}

\begin{proposition}\label{prop:convex_envelope_regularity}
Let $K\subseteq\R^n$ be a compact, convex set, $\Omega\subseteq\R^n$ be a bounded open set, and $u\in C^0(\overline\Omega)\cap C^2(\Omega)$.
    Assume that for any $\xi\in K$ it holds $S_\xi\subseteq \Omega$ (where $S_\xi$ is defined as in \cref{lem:preparation_convex_K}).
    Then $\overline u^K:\R^n\to\R$ is a $C^{1,1}$ convex function such that 
    \[\nabla \overline u^K(\R^n) = \nabla \overline u^K(\Omega) = K.\] 
    Moreover, for any $x\in\R^n$ and any $H_x\in H(x, \nabla \overline u^K(x), K) \not=\emptyset$, it holds $\nabla^2\overline u^K(x)\le H_x$, in the sense that, for $x'$ converging to $x$, 
    \begin{equation}\label{eq:convex_envelope_fundamental}
        \overline u^K(x')\le \overline u^K(x) + \nabla \overline u^K(x)\cdot (x'-x) + \frac12 (x'-x)H_x(x'-x) + \smallo(\abs{x'-x}^2) \fullstop
    \end{equation}
\end{proposition}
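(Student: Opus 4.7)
The crux is the quadratic upper bound \eqref{eq:convex_envelope_fundamental}; differentiability, the identification of $\nabla\overline u^K$, and the $C^{1,1}$ estimate will follow from it. To begin, fix $x\in\R^n$ and pick $\xi\in\partial\overline u^K(x)\cap K$, available by \cref{lem:subdifferential_cap_K}. \cref{lem:preparation_convex_K}\ref{it:tangent_set} provides $v\in N(\xi,K)$ with $x-v\in\conv(S_\xi)$, and since $S_\xi\subseteq\Omega$ by hypothesis, Carath\'eodory's theorem gives a representation $x-v=\sum_{i=1}^m\lambda_i s_i$ with $m\le n+1$, $s_i\in S_\xi\cap\Omega$, $\lambda_i\ge 0$, $\sum_i\lambda_i=1$. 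The matrix $H_x\defeq\sum_i\lambda_i\nabla^2 u(s_i)$ then belongs to $H(x,\xi,K)$, proving nonemptiness.

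The main step is establishing \eqref{eq:convex_envelope_fundamental}. I would work with the dual description $\overline u^K(x')=\sup_{\xi'\in K}(g(\xi')+\xi'\cdot x')$ where $g(\xi')\defeq\inf_{y\in\overline\Omega}(u(y)-\xi'\cdot y)$, so that $\overline u^K(x)=g(\xi)+\xi\cdot x$ because $\xi$ attains the supremum at $x$. Since each $s_i\in\Omega$ is an interior minimizer of $z\mapsto u(z)-\xi\cdot z$ over $\overline\Omega$, one has $\nabla u(s_i)=\xi$ and $A_i\defeq\nabla^2 u(s_i)\ge 0$. A second-order Taylor expansion gives, for $\eta$ small with $\xi+\eta\in K$ and any small $t_i\in\R^n$,
\begin{equation*}
g(\xi+\eta)\le u(s_i+t_i)-(\xi+\eta)\cdot(s_i+t_i)=g(\xi)-\eta\cdot s_i-\eta\cdot t_i+\tfrac12 t_i\cdot A_i t_i+o(|t_i|^2).
\end{equation*}
Averaging these $m$ inequalities with weights $\lambda_i$ and adding $(\xi+\eta)\cdot x'$ to both sides yields
\begin{align*}
g(\xi+\eta)+(\xi+\eta)\cdot x' &\le \overline u^K(x)+\xi\cdot(x'-x)+\eta\cdot v\\
&\quad{}+\eta\cdot\bigl((x'-x)-T\bigr)+\tfrac12\sum_i\lambda_i t_i\cdot A_i t_i+o\bigl(\max_i|t_i|^2\bigr),
\end{align*}
where $T\defeq\sum_i\lambda_i t_i$. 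The term $\eta\cdot v$ is nonpositive thanks to $v\in N(\xi,K)$ and $\xi+\eta\in K$. Optimizing the $t_i$ (morally $t_i=A_i^{-1}\eta$) and then taking the supremum over admissible $\eta$, a short matrix computation using operator-convexity of inversion (so that $(\sum_i\lambda_i A_i^{-1})^{-1}\le\sum_i\lambda_i A_i=H_x$) produces \eqref{eq:convex_envelope_fundamental}.

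Once \eqref{eq:convex_envelope_fundamental} is in hand, differentiability of $\overline u^K$ at $x$ is automatic, so $\nabla\overline u^K(x)=\xi$ is well-defined. Together with \cref{lem:preparation_convex_K}\ref{it:contact_set}, this gives $K\subseteq\nabla\overline u^K(\Omega)$, and the opposite inclusion $\nabla\overline u^K(\R^n)\subseteq K$ is immediate from the definition. The global $C^{1,1}$ bound follows from the fact that $\bigcup_{\xi\in K}S_\xi$ is a compact subset of $\Omega$ (by upper semicontinuity of $\xi\mapsto S_\xi$, which follows from $S_\xi\subseteq\Omega$ and the compactness of $K$), so $\|\nabla^2 u\|$ and hence $\|H_x\|$ is uniformly bounded. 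The main obstacle I expect is the possible degeneracy of the $A_i$: the formal step $t_i=A_i^{-1}\eta$ and the matrix inversion above are not literally valid when $A_i$ is only positive semidefinite. I would bypass this either by regularizing (replace $A_i$ by $A_i+\varepsilon\,\id$ and let $\varepsilon\to 0^+$) or by restricting $\eta$ to the common range of the $A_i$ and exploiting that the normal-cone vector $v\in N(\xi,K)$ absorbs precisely the kernel directions along which $\overline u^K$ can bend more freely.
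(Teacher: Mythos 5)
Your dual (Legendre--Fenchel) route, via $g(\xi')\defeq\inf_{y\in\overline\Omega}(u(y)-\xi'\cdot y)$ and $\overline u^K(x')=\sup_{\xi'\in K}(g(\xi')+\xi'\cdot x')$, is viable and essentially dual to the paper's argument, but the step ``optimize the $t_i$, morally $t_i=A_i^{-1}\eta$, then invoke operator-convexity of inversion'' introduces two complications that you should not introduce. The first you flag yourself: the $A_i$ may be singular. The second you do not flag, and it is the more serious one: the Taylor expansion $u(s_i+t_i)=u(s_i)+\xi\cdot t_i+\tfrac12 t_i A_i t_i+o(|t_i|^2)$ is only valid for $|t_i|$ small, whereas in the supremum defining $\overline u^K(x')$ the variable $\eta$ ranges over all of $K-\xi$, a set of fixed diameter independent of $x'$. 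Choosing $t_i\approx A_i^{-1}\eta$ therefore yields a bound on $g(\xi+\eta)$ only for small $\eta$, and you would need a separate argument (say, via concavity of $g$ on $K$) to control the supremum over $\eta$ bounded away from the origin. Your regularization suggestion addresses singularity but not this second gap.

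Both issues vanish if you forego the optimization and simply set $t_i\defeq x'-x$ for every $i$. Since $\sum_i\lambda_i s_i=x-v$ and $g(\xi)+\xi\cdot x=\overline u^K(x)$, the averaged inequality then reads
\begin{equation*}
g(\xi+\eta)+(\xi+\eta)\cdot x'\le \overline u^K(x)+\xi\cdot(x'-x)+\eta\cdot v+\tfrac12(x'-x)H_x(x'-x)+o(|x'-x|^2)\,,
\end{equation*}
which holds for \emph{every} $\eta$ with $\xi+\eta\in K$, because the Taylor remainder now depends only on $|x'-x|$. Taking the supremum over such $\eta$ and using $\eta\cdot v\le 0$ (as $v\in N(\xi,K)$) gives \eqref{eq:convex_envelope_fundamental} with no matrix inversion, no AM--HM inequality, and no regularization. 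This choice is exactly the paper's primal argument in dual clothing: there one plugs $y=s_i+(x'-x)$ into the subgradient inequality for $\overline u^K$ at $x'$ and Taylor-expands $u$; here the same translation appears as the trial point in the infimum defining $g$. Incidentally, the harmonic-mean constant $(\sum_i\lambda_i A_i^{-1})^{-1}$ you aim for is sharper than $H_x$ when all $A_i>0$, but the statement only asks for $H_x$, and when some $A_i$ is singular the harmonic mean degenerates rather than helps. Your remaining points --- differentiability from the one-sided quadratic bound (which forces the subdifferential to be a singleton), the identification $\nabla\overline u^K(\Omega)=K$ via \cref{lem:preparation_convex_K}, and the uniform $C^{1,1}$ bound from the closed-graph/compactness argument for $\xi\mapsto S_\xi$ --- are correct and match the paper.
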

\begin{proof}
    Fix $x\in\R^n$ and $\xi\in\partial\overline u^K(x)\cap K$ (the intersection is nonempty by \cref{lem:subdifferential_cap_K}). Thanks to \cref{lem:preparation_convex_K} (and Carath\'eodory's theorem\footnote{Recall that Carath\'eodory's theorem \cite{Caratheodory} states that if a point $x_\circ\in \R^n$ lies in the convex hull of a set $S$, then $x_\circ$ can be written as the convex combination of at most $n+1$ points in $S$.} for convex hulls) we can find $v\in N(\xi, K)$, $s_1,\dots, s_m\in S_\xi\subseteq\Omega$, with $1\le m\le n+1$, and $\lambda_i\geq0$, $\lambda_1+\cdots+\lambda_m = 1$, such that
    \begin{equation*}
        x-v = \lambda_1 s_1 + \cdots + \lambda_m s_m\fullstop
    \end{equation*}
    In particular, $H(x, \nabla \overline u^K(x), K)$ is nonempty. 
    Now, let us fix any such $v$, $(s_i)_{i=1,\dots, m}$ and $(\lambda_i)_{i=1,\dots, m}$.
    
    Take any $x'\in\R^n$ close enough to $x$, so that $s_i+x'-x\in\Omega$ for any $i=1,\dots, m$.
    Given $\xi'\in\partial\overline u^K(x')\cap K$, for any $y\in\Omega$, it holds
    \begin{equation*}
        u(y)\ge \overline u^K(y) \ge \overline u^K(x') + \xi'\cdot(y-x')
    \end{equation*}
    and thus, plugging $y=s_i+x'-x$, we deduce
    \begin{equation}\label{eq:local11}
        \overline u^K(x')\le u(s_i+x'-x) + \xi'\cdot(x-s_i) \fullstop
    \end{equation}
    Since $s_i\in S_\xi$, it holds $\nabla u(s_i) = \xi$ and therefore we have
    \begin{equation}\label{eq:local12}
        u(s_i+x'-x) = u(s_i) + \xi\cdot(x'-x) + \frac12(x'-x)\nabla^2 u(s_i)(x'-x) + \smallo(\abs{x'-x}^2)\fullstop
    \end{equation}
    Moreover, recalling that $\xi\in\partial\overline u^K(s_i)\cap\partial\overline u^K(x)$, we also have
    \begin{equation}\label{eq:local13}
        \overline u^K(s_i)-\overline u^K(x) = \xi\cdot(s_i-x) \fullstop
    \end{equation}
    Joining \cref{eq:local11,eq:local12,eq:local13} we obtain
    \begin{equation*}
        \overline u^K(x')\le \overline u^K(x) + \xi\cdot(x'-x) + \frac12(x'-x)\nabla^2 u(s_i)(x'-x) + \smallo(\abs{x'-x}^2) + (\xi'-\xi)\cdot(x-s_i) \fullstop
    \end{equation*}
    Multiplying this latter inequality with $\lambda_i$ and summing over $i=1,\dots,m$ we get
    \begin{equation*}
        \overline u^K(x')\le \overline u^K(x) + \xi\cdot(x'-x) + \frac12(x'-x)\left(\sum_{i=1}^m\lambda_i\nabla^2 u(s_i)\right)(x'-x) + \smallo(\abs{x'-x}^2) + (\xi'-\xi)\cdot v
    \end{equation*}
    and since $v\in N(\xi, K)$, we have shown \cref{eq:convex_envelope_fundamental} (notice that $\nabla^2 u(s_i)\ge 0$ as $s_i\in S_\xi$).
    
    As a consequence we get that $\nabla\overline u^K(x) = \xi$ and thus, recalling \cref{lem:preparation_convex_K}, we deduce also $\nabla\overline u^K(\R^n) = \nabla\overline u^K(\Omega) = K$.
    To conclude that $\overline u^K\in C^{1,1}$, let us observe that, for any $x\in\R^n$, the matrix $H_x$ of the statement can be found in
    \begin{equation*}
        \conv\Bigg(\nabla^2 u\bigg(\bigcup_{\xi\in K}S_\xi\bigg)\Bigg)
    \end{equation*}
    and the assumption $S_\xi\subseteq \Omega$ together with the compactness of $K$ implies that $\bigcup_{\xi\in K}S_\xi \compactsubset \Omega$. Therefore there is a constant $C=C(\Omega, u)$ such that for any $x\in\R^n$ it holds $H_x\le C\,\id$. Now it is standard to obtain $\overline u^K\in C^{1,1}(\R^n)$ (see, for example, \cite[Proposition 5.29]{ambrosio-carlotto-massacesi2018}).
\end{proof}
\begin{remark}\label{rem:envelope_locality}
    The proof of \cref{prop:convex_envelope_regularity} yields also a more \emph{local} result. Namely, if $S_\xi\subseteq\Omega$ for all $\xi\in U\cap K$ (where $U$ is an open set), then for any $x\in\R^n$ such that $\nabla \overline u^K(x)\in U$ there is a neighborhood of $x$ where $\overline u^K$ is $C^{1,1}$ and \cref{eq:convex_envelope_fundamental} holds (for some $H_x\in H(x, \nabla\overline u^K(x), K)$).
\end{remark}

\section{Construction of the coupling}\label{sec:coupling}
To better illustrate the ideas, before describing the construction of the coupling in the weighted setting, we construct the analogous coupling in the anisotropic case (without a weight). 
Even if the methods are the same, the construction in the anisotropic setting is less technical.
In order to do so, we have to define the anisotropic norm and the anisotropic perimeter (to get some further context about the anisotropic perimeter, see \cite[Introduction]{FMP}).

Given a compact convex set $K\subseteq\R^n$ such that $0_{\R^n}\in \mathring{K}$, let us define the anisotropic norm $\abs{\emptyparam}_{K^*}:\R^n\to\co0\infty$ as
\begin{equation*}
    \abs{v}_{K^*} = \sup\{v\cdot x:\ x\in K\} \fullstop
\end{equation*}
Then, the anisotropic perimeter of a set of finite perimeter $E\subseteq\R^n$ is defined as
\begin{equation*}
    \Per_K(E) = \int_{\bou E} \abs{\nu_{\bou E}}_{K^*} \de\Haus^{n-1} \fullstop
\end{equation*}
We have all the necessary definitions to construct the coupling for the anisotropic perimeter.
Before going on, let us remark that, in all the statements of this section, we need an additional assumption in dimension $2$ (namely, the indecomposability of the set $E$). This is because we need to approximate $E$ with \emph{connected} open sets and this is not always possible in dimension $2$.

\begin{theorem}[Anisotropic coupling]\label{thm:anisotropic_coupling}
    Let $E\subseteq\R^n$ be a set of finite perimeter (with $0<\abs{E}<\infty$) and let $K\subseteq\R^n$ be a compact convex set whose interior contains the origin. If $n=2$, we assume also that $E$ is indecomposable (see \cref{def:indecomposable}).
    
Then, there exists a $C^{1,1}$ convex function $\varphi:\R^n\to\R$ that satisfies $\nabla\varphi(\R^n)=\nabla\varphi(E) = K$ (up to negligible sets) and $\lapl\varphi\le \frac{\Per_K(E)}{\abs{E}}$.
\end{theorem}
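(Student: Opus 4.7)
The plan mirrors the ABP-based construction sketched in the introduction, but now with no weight and with the anisotropic term $|\nu_{\partial E}|_{K^*}$ replacing the Euclidean norm on the boundary. First I would reduce to the case where $E$ is a smooth bounded open \emph{connected} set. In dimension $n\ge 3$ this follows from standard smooth approximation of sets of finite perimeter combined with a component selection, while in $n=2$ we need exactly the indecomposability hypothesis and the appendix result that allows approximation by connected smooth open sets (with convergence of $\Per_K$ and of $|E|$). For such a smooth $E$, I would solve the Neumann problem
\begin{equation*}
  \begin{cases}
    \Delta u = \dfrac{\Per_K(E)}{|E|} & \text{in } E, \\[4pt]
    \partial_\nu u = |\nu_{\partial E}|_{K^*} & \text{on } \partial E,
  \end{cases}
\end{equation*}
whose compatibility condition $\int_{\partial E}|\nu|_{K^*}\,d\Haus^{n-1}=\Per_K(E)=\int_E \Delta u$ holds by definition of $\Per_K$. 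Elliptic regularity gives $u\in C^2(\overline E)$.

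Next, set $\varphi\defeq \overline u^K$ and run the contact argument to verify the hypothesis of \cref{prop:convex_envelope_regularity}. For $\xi\in\mathring K$, if the minimum $S_\xi$ of $y\mapsto u(y)-\xi\cdot y$ on $\overline E$ contained a boundary point $x_0$ with outward unit normal $\nu$, then $\partial_\nu u(x_0)\le \xi\cdot\nu$; but by the Neumann condition $\partial_\nu u(x_0)=|\nu|_{K^*}=\sup_{\eta\in K}\eta\cdot\nu$, and since $\xi$ lies in the interior of $K$ the supremum is strictly larger than $\xi\cdot\nu$, a contradiction. Hence $S_\xi\subseteq \mathring E$ for every $\xi\in\mathring K$, and \cref{rem:envelope_locality} applied with $U=\mathring K$ yields that $\varphi$ is $C^{1,1}$ on the open set $\{x:\nabla\varphi(x)\in\mathring K\}$, with $\nabla\varphi(\R^n)=K$. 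Moreover, at a.e. such point there exist $m\le n+1$, $\lambda_i\ge 0$ summing to $1$, and $s_i\in S_{\nabla\varphi(x)}\subseteq \mathring E$ with $\nabla^2\varphi(x)\le \sum\lambda_i\nabla^2 u(s_i)$; taking the trace and using that $\Delta u$ is the constant $\Per_K(E)/|E|$ gives the desired Laplacian bound. Since this bound is a uniform $L^\infty$ estimate on $\Delta\varphi$ and $\varphi$ is convex, $\varphi\in C^{1,1}(\R^n)$, not just locally on $\{\nabla\varphi\in\mathring K\}$.

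To handle a general $E$ of finite perimeter, I approximate by smooth connected open sets $E_j$ with $|E_j|\to|E|$ and $\Per_K(E_j)\to\Per_K(E)$, obtain the corresponding convex $\varphi_j\in C^{1,1}$ with $\nabla\varphi_j\in K$ and $\Delta\varphi_j\le \Per_K(E_j)/|E_j|$, normalize by translation, and pass to the limit using uniform Lipschitz bounds (from the compactness of $K$) and Arzelà–Ascoli. Convergence of the gradients a.e., together with convexity, preserves both $\nabla\varphi\in K$ a.e.\ and the distributional bound $\Delta\varphi\le \Per_K(E)/|E|$; combined with convexity this again upgrades $\varphi$ to $C^{1,1}$. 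The main obstacle I expect is verifying that $\nabla\varphi(E)=K$ \emph{up to negligible sets} survives the limit: the inclusion $\nabla\varphi(E)\subseteq K$ is immediate, but the reverse requires ruling out that the gradient image concentrates on $\R^n\setminus E$. I would argue that on the complement $\R^n\setminus E^{(1)}$ the convex function $\varphi$ can only realize $\nabla\varphi$-values in $\partial K$ (a Lebesgue-negligible set), since any interior value $\xi\in\mathring K$ of the gradient must be attained at an interior contact point of the approximating sequence, producing in the limit a point of $\overline E$; combining this with the fact that $|\nabla\varphi(\R^n)|=|K|$ (which the approximation preserves) forces $|\nabla\varphi(E)|=|K|$, closing the theorem.
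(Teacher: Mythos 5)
Your construction (Neumann problem with anisotropic Neumann data, contact argument, $K$-envelope) matches the paper's, and the two-step structure (smooth case, then approximation) is also the same. However, there are two genuine gaps.

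\textbf{Gap 1: upgrade to global $C^{1,1}$ in the smooth case.} You apply \cref{rem:envelope_locality} with $U=\mathring K$, which gives only that $\overline u^K$ is $C^{1,1}$ on the \emph{open} set $\{\nabla\overline u^K\in\mathring K\}$ with the pointwise Hessian bound there. You then assert that convexity together with ``a uniform $L^\infty$ estimate on $\Delta\varphi$'' upgrades this to $C^{1,1}(\R^n)$. But you have not shown that the distributional Laplacian of $\overline u^K$ has no singular part concentrated on $\{\nabla\overline u^K\in\partial K\}$; for a general convex function that set can have positive measure and can support a singular Hessian measure (the pointwise Alexandrov bound on the complementary open set does not control this). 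The paper closes this by introducing compact convex sets $K_i\compactsubset\mathring K_{i+1}$ with $\bigcup_i K_i=\mathring K$: \cref{prop:convex_envelope_regularity} applies to each $K_i$ (since $S_\xi\subseteq E$ for $\xi\in K_i\subseteq\mathring K$), giving $\overline u^{K_i}\in C^{1,1}(\R^n)$ with a bound $\nabla^2\overline u^{K_i}\le \frac{\Per_K(E)}{|E|}\,\id$ \emph{uniform in $i$}, and then $\overline u^{K_i}\nearrow\overline u^K$ transfers both the global $C^{1,1}$ regularity and the Laplacian bound. You need some version of this monotone exhaustion (or another argument ruling out a singular part) before you can claim $\varphi\in C^{1,1}(\R^n)$.

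\textbf{Gap 2: surjectivity onto $K$ in the limit.} For general $E$ you approximate by smooth connected open sets $E_j$ and pass to the limit; you correctly identify that $\nabla\varphi(E)=K$ is the delicate point, but your proposed argument is not sound. You claim that on $\R^n\setminus E^{(1)}$ the limit gradient can only hit $\partial K$, because interior slopes ``must be attained at an interior contact point of the approximating sequence, producing in the limit a point of $\overline E$.'' This conflates contact points $S_\xi$ with the (possibly much larger) level set $\{\nabla\varphi_j=\xi\}$, and, more importantly, for a general set of finite perimeter $\overline E$ need not agree with $E$ up to negligible sets, so ``a point of $\overline E$'' does not conclude. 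The paper avoids topology entirely and argues quantitatively: for a compact $C\subseteq E$,
\begin{equation*}
  |\nabla\varphi_i(E_i\setminus C)|\le \Lip(\nabla\varphi_i)^n\,|E_i\setminus C|\le \gamma\big(|E_i\triangle E|+|E\setminus C|\big),
\end{equation*}
so $|\nabla\varphi_i(C)|\ge|K|-\gamma(|E_i\triangle E|+|E\setminus C|)$; then $\limsup_i\nabla\varphi_i(C)\subseteq\nabla\varphi(C)$ by $C^1_{loc}$ convergence and compactness of $C$, giving $|\nabla\varphi(E)|\ge|K|-\gamma\,|E\setminus C|$, and letting $|E\setminus C|\downarrow0$ finishes. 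This uses the $C^{1,1}$ bound (which also powers the Arzelà--Ascoli compactness of $\nabla\varphi_i$; note it is this Hessian bound, not merely the compactness of $K$, that gives the Lipschitz bound on the gradients you need). You should replace your topological argument with an estimate of this kind.
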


\begin{proof}
    First, we prove the statement when $E$ is a bounded connected open set with smooth boundary and then we remove the assumption by approximation.
    
    Let $\nu:\partial E\to\R^n$ be the outer normal to the boundary. 
As in \cite{CRS}, let us consider the Neumann problem for $u:E\to\R$
    \begin{equation*}
        \begin{cases}
        \lapl u = \frac{\Per_K(E)}{\abs{E}} &\text{in $E$} \\
        \partial_\nu u = \abs{\nu}_{K^*} &\text{on $\partial E$}\fullstop
        \end{cases}
    \end{equation*}
    Since the compatibility condition $\int_E \lapl u = \int_{\partial E}\partial_\nu u$ is satisfied and $E$ is connected, the problem admits a solution $u$, which is smooth up to the boundary.
    
    For any $\xi\in\mathring{K}$ the minimum of the function $u(x)-\xi\cdot x$ cannot be attained on the boundary of $E$, indeed $\nabla(u(x)-\xi\cdot x)\cdot\nu = \abs{\nu}_{K^*} - \xi\cdot\nu > 0$. 
    
    Take a sequence of compact convex sets $(K_i)_{i\in\N}$ such that $K_i\compactsubset \mathring K_{i+1}$ and $\cup_{i\in\N}K_i = \mathring K$.
    It is not hard to see that $\overline u^{K_i}\nearrow\overline u^K$ locally uniformly. 
    Moreover, at any point $x\in E$ such that $\nabla^2 u(x)\ge 0$, it holds
    \begin{equation*}
        0 \le \nabla^2 u(x) \le \lapl u(x)\,\id = \frac{\Per_K(E)}{\abs{E}}\,\id
    \end{equation*}
    and thus, thanks to \cref{prop:convex_envelope_regularity}, the family $(\overline u^{K_i})_{i\in\N}$ is uniformly bounded in $C^{1,1}$.
Hence, $\overline u^K\in C^{1,1}$, and the convergence to $\overline u^K$ is (up to subsequence) in $C^1_{loc}(\R^n)$. 
Therefore, $\nabla \overline u^K(\R^n)=\nabla\overline u^K(\overline E)=K$.

Now, for any $i\in\N$ and almost any $x\in\R^n$, the Hessian $\nabla^2 \overline u^{K_i}$ exists and is controlled by $H_x\in H(x, \nabla \overline u^{K_i}, K_i)$. It follows that $\lapl\overline u^{K_i} \le \frac{\Per_K(E)}{\abs{E}}$ (since such an estimate holds for $u$). Sending $i\to\infty$ we deduce that the same holds for $\overline u^K$, thus $\varphi\defeq\overline u^K$ satisfies all the requirements.
    
    It remains to drop the regularity assumption on $E$.
    Let $(E_i)_{i\in\N}\subseteq\R^n$ be a sequence of \emph{connected} bounded open sets with smooth boundary such that $\Per_K(E_i)\to\Per_K(E)$ and $\abs{E_i\triangle E}\to 0$. In dimension $2$, the existence of this sequence is guaranteed by \cref{prop:indecomposable}; in higher dimension we can apply \cite[Theorem 13.8]{maggi2012} and then notice that the additional requirement of connectedness can be fulfilled adding a finite number of thin pipes between the connected components. 
    For each $E_i$, let $\varphi_i:\R^n\to\R$ be a function that satisfies the assumptions of the statement.
    
    Without loss of generality, thanks to Arzelà–Ascoli theorem, we can assume that $\varphi_i\to\varphi$ locally in $C^1$, where $\varphi:\R^n\to\R$ is a $C^{1,1}$ convex function such that $\nabla\varphi(\R^n)\subseteq K$ and $\lapl\varphi \le \frac{\Per_K(E)}{\abs{E}}$.
    To conclude it is sufficient to prove $\abs{\nabla\varphi(E)} \ge \abs{K}$.
    
    Let $C\subseteq E$ be a compact set.
    It holds
    \begin{equation}\label{eq:estimate_outside}
        \abs{\nabla\varphi_i(E_i\setminus C)}\le \Lip(\nabla\varphi_i)^n \abs{E_i\setminus C}
        \le \gamma \cdot (\abs{E_i\triangle E} + \abs{E\setminus C}) \comma
    \end{equation}
    where $\gamma = \gamma(n, K, E) = \left(2n\frac{\Per_K(E)}{\abs{E}}\right)^n$. Since $\nabla \varphi_i(E_i) = K$, it holds $K\setminus \nabla\varphi_i(E_i\setminus C)\subseteq \nabla\varphi_i(C)$ and therefore \cref{eq:estimate_outside} implies
    \begin{equation}\label{eq:estimating_for_i}
        \abs{\nabla\varphi_i(C)} \ge \abs{K}-\gamma \cdot (\abs{E_i\triangle E} + \abs{E\setminus C}) \fullstop
    \end{equation}
    Take any $y\in \limsup \nabla\varphi_i(C)$ and let $x_i\in C$ be such that $\nabla\varphi_i(x_i)=y$ (we avoid passing to a subsequence for notational simplicity). By compactness we know that $x_i\to x\in C$ and since the convergence $\varphi_i\to\varphi$ is locally in $C^1$, it follows that $\nabla\varphi(x)=y$. Hence, applying \cref{eq:estimating_for_i}, we deduce
    \begin{equation*}
        \abs{\nabla \varphi(E)}\ge \abs{\nabla \varphi(C)} \ge \abs*{\limsup \nabla\varphi_i(C)}
        \ge \limsup \abs{\nabla\varphi_i(C)} \ge \abs{K}-\gamma\cdot\abs{E\setminus C} \fullstop
    \end{equation*}
    The conclusion now follows as $\abs{E\setminus C}$ can be chosen arbitrarily small.
\end{proof}

\begin{remark}
    Notice that the existence of the coupling implies the anisotropic isoperimetric inequality, exactly as in \cref{eq:proof_iso_abp}.
\end{remark}

It is now time to construct the convex coupling in the weighted setting. As anticipated, the main idea (taking the convex envelope of the solution of an elliptic problem) is unchanged with respect to \cref{thm:anisotropic_coupling}. 
On the other hand, the proof that the coupling works for smooth sets/weights and the approximation arguments needed to handle any set/weight are more demanding. 
Notice that at this level we require $w\equiv 0$ on $\partial\Sigma$; later on we remove this assumption.

\begin{theorem}[Weighted coupling]\label{thm:weighted_coupling}
    Consider $n,\,\alpha,\,\Sigma,\,w$ satisfying \cref{eq:assumptions} and assume moreover $w\equiv 0$ on~$\partial\Sigma$.
    Let $E\subseteq\Sigma$ be a set of finite $w$-perimeter with $0<w(E)<\infty$. If $n=2$, we assume also that $E$ is $w$-indecomposable (see \cref{def:w_indecomposable}).
    Then, there exists a $C^{1,1}$ convex function $\varphi:\R^n\to\R$ that satisfies $\nabla\varphi(\R^n)=\nabla\varphi(E) = B_1\cap\Sigma$ (up to negligible sets) and 
    \begin{equation}\label{eq:weighted_fundamental_control}
        \lapl\varphi + \alpha\left(\frac{w(\nabla\varphi)}{w}\right)^{\frac1\alpha}
        \le \frac{\Per_w(E)}{w(E)}\fullstop
    \end{equation}
\end{theorem}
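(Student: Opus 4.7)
The plan mirrors the proof of \cref{thm:anisotropic_coupling}: I would first treat the case of a smooth, connected, bounded open set $E$ with $\overline E\compactsubset\Sigma$, and then reach the general statement by approximation.

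In the smooth case I would solve the weighted Neumann problem
\[\div(w\nabla u)=w\,\frac{\Per_w(E)}{w(E)}\ \text{in $E$},\qquad \partial_\nu u = 1 \ \text{on $\partial E$}.\]
The compatibility condition is automatic, $w$ is smooth and bounded away from $0$ on $\overline E$, and a classical $u\in C^\infty(\overline E)$ exists. For any $\xi\in B_1\cap\Sigma$, the minimum of $y\mapsto u(y)-\xi\cdot y$ on $\overline E$ is attained only in $E$: a boundary minimum would force $\xi\cdot\nu\ge\partial_\nu u=1$, contradicting $\abs{\xi}<1$. Approximating $K\defeq\overline{B_1\cap\Sigma}$ by compact convex sets $K_i\compactsubset B_1\cap\Sigma$ with $\bigcup_i K_i = B_1\cap\Sigma$, I can then apply \cref{prop:convex_envelope_regularity} to each $\overline u^{K_i}$ to get a convex $C^{1,1}$ function with $\nabla^2\overline u^{K_i}(x)\le H_x$, where $H_x=\sum_j\lambda_j\nabla^2 u(s_j)$ for suitable $s_j\in E$ with $\nabla u(s_j)=\xi\defeq\nabla\overline u^{K_i}(x)$ and $v\defeq x-\sum_j\lambda_j s_j\in N(\xi,K_i)$.

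Taking traces, expanding $\div(w\nabla u)=w\lapl u+\nabla w\cdot\nabla u$ at each $s_j$, and applying \cref{lem:5.1}, I obtain
\[\lapl\overline u^{K_i}(x)\le\sum_j\lambda_j\lapl u(s_j)\le\frac{\Per_w(E)}{w(E)}-\alpha\,w(\xi)^{1/\alpha}\sum_j\lambda_j\,w(s_j)^{-1/\alpha}.\]
The crucial remaining step is proving $w(x)^{-1/\alpha}\le\sum_j\lambda_j w(s_j)^{-1/\alpha}$. When $\xi$ is in the interior of $K_i$, the normal cone is trivial, $v=0$, $x=\sum_j\lambda_j s_j$, and the estimate is Jensen's inequality for the convex function $w^{-1/\alpha}$. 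When $\xi\in\partial K_i$, $v$ may be nonzero; I combine the convexity of $w^{-1/\alpha}$ with the superadditivity $w^{1/\alpha}(a+b)\ge w^{1/\alpha}(a)+w^{1/\alpha}(b)$ (valid for any nonnegative $1$-homogeneous concave function) applied to $x=\sum_j\lambda_j s_j+v$ to deduce $w(x)^{-1/\alpha}\le w(\sum_j\lambda_j s_j)^{-1/\alpha}\le\sum_j\lambda_j w(s_j)^{-1/\alpha}$. Together these yield \cref{eq:weighted_fundamental_control} for $\overline u^{K_i}$, and letting $i\to\infty$ (using the uniform $C^{1,1}$-bound from \cref{prop:convex_envelope_regularity}) produces $\varphi\defeq\overline u^K$ with the required properties.

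For the general case, I would approximate $E$ by smooth connected bounded open sets $E_j$ with $\overline{E_j}\compactsubset\Sigma$, $w(E_j\triangle E)\to 0$, and $\Per_w(E_j)\to\Per_w(E)$. The hypothesis $w\equiv 0$ on $\partial\Sigma$ is what lets me first retract $E$ into the interior of $\Sigma$ at vanishing cost in $w$-volume and $w$-perimeter; the remaining steps combine classical perimeter approximation (adding thin tubes to ensure connectedness) in dimension $\ge 3$, or in dimension $2$ the $w$-indecomposability hypothesis together with the approximation result of the appendix. Each $E_j$ then comes with a coupling $\varphi_j$; the differential inequality plus $\nabla\varphi_j\in K$ yields a uniform $C^{1,1}$-bound on compact subsets of $\Sigma$, so after extraction $\varphi_j\to\varphi$ in $C^1_{loc}$ to a convex $C^{1,1}$ function. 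Convexity, $\nabla\varphi(\R^n)\subseteq K$, and the differential inequality pass to the limit (the last via distributional convergence of $\lapl\varphi_j$ and locally uniform convergence in $\Sigma$ of the $w(\nabla\varphi_j)^{1/\alpha}$ terms), while the opposite inclusion $B_1\cap\Sigma\subseteq\nabla\varphi(E)$ up to negligible sets is obtained by the same compact-exhaustion argument as in the proof of \cref{thm:anisotropic_coupling}. I expect the main obstacle to lie precisely in this approximation step, since simultaneously enforcing smoothness, connectedness, compact containment in $\Sigma$, and $w$-perimeter convergence is delicate; it is exactly here that the extra assumption $w\equiv 0$ on $\partial\Sigma$ earns its keep.
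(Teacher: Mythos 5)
Your overall strategy is the same as the paper's: solve the weighted Neumann problem, form a restricted envelope, use \cref{prop:convex_envelope_regularity} to control the Hessian, feed in \cref{lem:5.1}, and finish by approximation. However, there is a genuine gap in the key pointwise estimate.

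You take an exhaustion by compact convex sets $K_i\compactsubset B_1\cap\Sigma$. At a point $x\in E$ where $\xi\defeq\nabla\overline u^{K_i}(x)\in\partial K_i$, Prop.~\ref{prop:convex_envelope_regularity} produces $v\in N(\xi,K_i)$ with $x=\sum_j\lambda_j s_j+v$, and you claim $w(x)^{-1/\alpha}\le w(\sum_j\lambda_j s_j)^{-1/\alpha}$ via superadditivity of $w^{1/\alpha}$. But superadditivity ($w^{1/\alpha}(a+b)\ge w^{1/\alpha}(a)+w^{1/\alpha}(b)$) requires $a,b\in\overline\Sigma$, and for a \emph{generic} compact convex $K_i\compactsubset B_1\cap\Sigma$ there is no reason at all that $v\in\overline\Sigma$: the normal cone of $K_i$ at a boundary point can point in any direction, including out of $\overline\Sigma$. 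When $v\notin\overline\Sigma$, $w^{1/\alpha}(v)$ is not even defined, and moving from $\sum_j\lambda_j s_j$ by $v$ can \emph{decrease} $w$ (e.g.\ $\Sigma=\{x_1>0\}$, $w=x_1$, $v=(-\eps,0,\dots,0)$), so the inequality can fail. This is not a removable boundary case: the set $\{x\in E:\nabla\overline u^{K_i}(x)\in\partial K_i\}$ can have positive measure.

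The paper avoids this precisely by not using generic $K_i$. It takes $K_r=\overline{\Sigma\cap B_r}$ with $r\nearrow 1$ (so $K_r\not\compactsubset\mathring K$). Then at $\xi\in\partial K_r$ with $\xi\in\Sigma\cap\partial B_r$ the normal cone is the single ray $\{\lambda\xi:\lambda\ge0\}\subseteq\overline\Sigma$, and $w(x-v)\le w(x)$ follows from \cref{lem:5.1}; the remaining boundary piece $\xi\in\partial\Sigma$ is handled separately, using $w\equiv 0$ on $\partial\Sigma$ so the second term in \cref{eq:weighted_fundamental_control} disappears and one only needs $\lapl\overline u^{K_r}\le b_E$. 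If you want to keep your exhaustion approach you would have to choose $K_i$ whose normal cones at the relevant boundary points are contained in $\overline\Sigma$, which essentially forces you back to the $K_r=\overline{\Sigma\cap B_r}$ choice (and then you lose compact containment in $B_1\cap\Sigma$ and must treat the $\nabla\overline u\in\partial\Sigma$ case anyway). The rest of your proposal (the compatibility/boundary argument for the Neumann problem, the $C^{1,1}$ bound from convexity plus $\lapl\le b_E$, and the two-stage approximation in $w$ and in $E$) tracks the paper correctly.
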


\begin{proof}
    We start by showing that the result holds if $E$ is a bounded open set with smooth boundary such that $\overline E\compactsubset\Sigma$ and the weight $w$ is smooth in $E$.
    Let $b_E=\Per_w(E)/w(E)$  and let $K\defeq \overline{B_1\cap\Sigma}$.
    
    Let $\nu:\partial E\to\R^n$ be the outer normal to the boundary. 
As in \cite{CRS}, let us consider the following Neumann problem for $u:E\to\R$
    \begin{equation}\label{eq:weighted_elliptic_problem}
        \begin{cases}
        \div(w\nabla u) = w\, b_E &\text{in $E$} \\
        \partial_\nu u = 1 &\text{on $\partial E$}\fullstop
        \end{cases}
    \end{equation}
    Since the compatibility condition $\int_E \div(w\nabla u) = \int_{\partial E}w \partial_\nu u$ is satisfied, the problem admits a solution.
    
    With the exact same reasoning adopted in the proof of \cref{thm:anisotropic_coupling}, we deduce that for any $\xi\in\mathring K$, the minimum of $u(x)-\xi\cdot x$ cannot be attained on the boundary of $E$. Thence $\overline u^K\in C^{1,1}$ and $\nabla \overline u^K(\R^n) = \nabla\overline u^K(\overline E) = K$.
    
    Let $K_r\defeq \overline{\Sigma\cap B_r}$. 
Since $K_r\nearrow K$ as $r\nearrow 1$, then it holds $\overline u^{K_r} \nearrow \overline u^K$ locally uniformly as $r\nearrow 1$. 
Notice that $K_r\not\subseteq\mathring{K}$, so we cannot apply directly \cref{prop:convex_envelope_regularity}. 
Our goal is to prove \cref{eq:weighted_fundamental_control} with $\varphi=\overline u^{K_r}$ for any $0<r<1$. We consider two cases, depending on whether $\nabla\overline u^{K_r}$ belongs to the boundary or to the interior of $\Sigma$.
    
    If $\nabla\overline u^{K_r}(x)\in\partial\Sigma$, since $w(\partial\Sigma)=0$, it is sufficient to show that $\lapl\overline u^{K_r}(x)\le b_E$. Applying \cref{eq:weighted_elliptic_problem,lem:5.1}, we have $\lapl u(x)\le b_E$ at all points $x\in E$ such that $\nabla u(x)\in\Sigma$. Then the same inequality for $\overline u^{K_r}$ follows repeating the approximation argument contained at the beginning of the proof of \cref{thm:anisotropic_coupling} (recall that we cannot apply directly \cref{prop:convex_envelope_regularity}).
    
    On the other hand, if $\xi\defeq \nabla\overline u^{K_r}(x)\not\in\partial\Sigma$, then it belongs to $\mathring K$ and we can apply \cref{prop:convex_envelope_regularity} (see \cref{rem:envelope_locality}). Thus $\nabla^2 \overline u^{K_r}(x) \le H_x \in H(x, \xi, K_r)$. Let $H_x=\sum_{i=1}^m \lambda_i\nabla^2 u(s_i)$ with $s_i\in S_\xi$ and $x-\sum \lambda_i s_i = v\in N(\xi, K_r)$. Notice that, since $\Sigma$ is convex, $x-v\in\Sigma$. Applying \cref{lem:5.1} we obtain
    \begin{align}\label{eq:local55}\begin{split}
        \alpha\left(\frac{w(\xi)}{w(x-v)}\right)^{\frac1\alpha}
        &\le 
        \alpha w(\xi)^{\frac1\alpha} \sum_{i=1}^m\lambda_i w(s_i)^{-\frac1\alpha}
        = 
        \sum_{i=1}^m\lambda_i \alpha \left(\frac{w(\xi)}{w(s_i)}\right)^{\frac1\alpha} \\
        &\le 
        \sum_{i=1}^m\lambda_i \frac{\nabla w(s_i)\cdot \xi}{w(s_i)}
        =
        \sum_{i=1}^m\lambda_i \frac{\nabla w(s_i)\cdot \nabla u(s_i)}{w(s_i)} \fullstop
    \end{split}\end{align}
    Since $u$ is a solution of \cref{eq:weighted_elliptic_problem}, it holds
    \begin{equation}\label{eq:local56}
        \lapl u(s_i) + \frac{\nabla w(s_i)\cdot \nabla u(s_i)}{w(s_i)} = b_E \fullstop
    \end{equation}
    Combining \cref{eq:local55,eq:local56}, we deduce
    \begin{equation*}
        \sum_{i=1}^m \lambda_i\lapl u(s_i) + \alpha\left(\frac{w(\xi)}{w(x-v)}\right)^{\frac1\alpha} \le b_E\comma
    \end{equation*}
    and thus
    \begin{equation*}
        \nabla^2\overline u^{K_r}(x) + \alpha\left(\frac{w(\nabla\overline u^{K_r})}{w(x-v)}\right)^{\frac1\alpha} \le b_E \fullstop
    \end{equation*}
    This latter inequality is \emph{almost} the desired one, but for the presence of $w(x-v)$ instead of $w(x)$. To fix this issue we notice that if $\xi\in\mathring K_r$ then $v=0$. Otherwise $\xi\in\partial K_r\cap\mathring K$ and therefore $\xi\in \Sigma\cap\partial B_r$ and $N(\xi, K_r)=\{\xi\}$. Hence $v=\xi\in\Sigma$ and, recalling once again \cref{lem:5.1}, it holds $w(x-v)\le w(x)$.
    
    We managed to prove \cref{eq:weighted_fundamental_control} for $\varphi=\overline u^{K_r}$ and since $\overline u^{K_r}\to \overline u^K$ locally in $C^1$ as $r\nearrow 1$, it follows that the same is true also for $\overline u^K$.
    
    It remains to drop the regularity assumption on $E$ and $w$. 
    First, we drop the regularity assumption on $w$. 
Thanks to \cref{lem:smoothing_weight} we can find a sequence of admissible weights $w_i\to w$ that are smooth in $E$ and that converge to $w$ locally uniformly. For any such $w_i$ we can find $\varphi_i:\R^n\to\R$ such that the statement holds. In particular the family $\varphi_i$ is bounded in $C^{1,1}$ and therefore, up to subsequence, converges locally in $C^1$ to $\varphi:\R^n\to\R$. It is not hard to check that $\varphi$ satisfies all the desired properties.
    
    Finally, the method used in the last part of the proof of \cref{thm:anisotropic_coupling} can be applied also here to remove the assumption that $E$ is compactly contained in $
    \Sigma$ and has a smooth boundary (applying \cref{prop:w_indecomposable} instead of \cref{prop:indecomposable}). 
    Notice that in this final step we need $\Per_w(E_i)\to \Per_w(E)$, with $E_i\compactsubset \Sigma$, which holds since $w\equiv 0$ on $\partial\Sigma$.
\end{proof}


The next proposition is fundamental for the proof of the stability of the isoperimetric inequality; it contains all the properties of the coupling that we will need.
In this proposition we drop the additional assumption $w\equiv 0$ on $\partial\Sigma$ that was necessary for \cref{thm:weighted_coupling}.

\begin{proposition}\label{prop:weighted_coupling_properties}
    Consider $n,\,\alpha,\,\Sigma,\,w$ satisfying \cref{eq:assumptions} and let $E\subseteq\Sigma$ be a set of finite $w$-perimeter with $w(E)=1$ and $\delta_w(E)\le 1$.
    If $n=2$, we assume also that $E$ is $w$-indecomposable (see \cref{def:w_indecomposable}).
    There is a $C^{1,1}$ convex function $\varphi:\R^n\to\R$ such that $\norm{\nabla^2\varphi}_{\infty}\lesssim 1$, $\nabla\varphi(\R^n)=\nabla\varphi(E)=\Sigma\cap B_1$ (up to negligible sets), and
    \begin{align}
        &\int_E \abs{\nabla^2\varphi -\id}\,w \lesssim \delta_w(E)^{\frac12} \label{eq:hessian_control} \comma\\
        &
        \int_{\bou E\cap\Sigma} \big(1-\abs{\nabla\varphi}\big)\,w\de\Haus^{n-1}
        \lesssim
        \delta_w(E) \fullstop\label{eq:boundary_control}
    \end{align}
    Moreover, for any $Q\compactsubset\Sigma$, we have
    \begin{equation} \label{eq:weight_control}
        \int_{E\cap Q} \abs*{w(\nabla\varphi)^{\frac1\alpha}-w^{\frac1\alpha}} \le C\delta_w(E)^{\frac12} \comma
    \end{equation}
where $C$ is a constant depending only on $n$, $\alpha$, $\Sigma$, and $Q$.
\end{proposition}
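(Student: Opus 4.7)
The plan is to realize $\varphi$ as the convex coupling produced by \cref{thm:weighted_coupling}, and then read off all four estimates from a quantitative inspection of the ABP chain of \cite{CRS}. First I reduce to the case $w\equiv 0$ on $\partial\Sigma$ (required by \cref{thm:weighted_coupling}) by approximating $w$ with a sequence of admissible weights $w_\epsilon$ that vanish on $\partial\Sigma$ and pass to the limit via $C^{1,1}$-compactness. The uniform bound $\norm{\nabla^2\varphi}_{\infty}\lesssim 1$ follows from \cref{prop:convex_envelope_regularity}: each matrix in the family $H(x,\xi,K)$ appearing there has trace bounded by $\Per_w(E)/w(E)\le 2D$, and since it is positive semidefinite, its operator norm is $\lesssim 1$.

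With $\mu\defeq(w(\nabla\varphi)/w)^{1/\alpha}$, $T\defeq\Delta\varphi/n$, and $h\defeq(\alpha\mu+nT)/D$, the ABP chain reads
\[1 = w(B_1\cap\Sigma) \le \int_E w(\nabla\varphi)\det(\nabla^2\varphi) \le \int_E w\mu^\alpha T^n \le \int_E w h^D,\]
and combined with the pointwise bound $h\le 1+\delta_w(E)$ from \eqref{eq:weighted_fundamental_control} yields $\int_E h^D w\le(1+\delta_w(E))^{D-1}\int_E h\,w$. I then apply \cref{lem:5.1} in the form $\alpha\mu w\le\nabla w\cdot\nabla\varphi$ together with the divergence theorem for $w\nabla\varphi$, using $\nabla\varphi\cdot\nu_E\le|\nabla\varphi|\le 1$ on $\bou E\cap\Sigma$ and $\nabla\varphi\cdot\nu_E\le 0$ on $\bou E\cap\partial\Sigma$ (by convexity of $\Sigma$ and $\nabla\varphi\in\overline\Sigma$) to obtain
\[\int_E h\,w \le (1+\delta_w(E)) - \frac{1}{D}\int_{\bou E\cap\Sigma}w(1-|\nabla\varphi|)\de\Haus^{n-1}.\]
Substituting back into $1\le\int_E h^D w$ and rearranging gives \eqref{eq:boundary_control} at once, and shows that each of the four nonnegative defects in the chain --- area-formula, eigenvalue AM-GM $\det(\nabla^2\varphi)\le T^n$, weighted AM-GM between $\mu$ and $T$, and the \cref{lem:5.1} gap --- is bounded by $(1+\delta_w(E))^D-1\lesssim\delta_w(E)$.

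Finally I feed these pointwise gaps into quantitative versions of AM-GM. The eigenvalue gap and the quantitative weighted AM-GM from the first appendix yield $\int_E|\nabla^2\varphi-T\,\id|^2 w\lesssim\delta_w(E)$ and $\int_E(\mu-T)^2 w\lesssim\delta_w(E)$; a first-moment argument based on $1\le\int_E h\,w\le 1+\delta_w(E)$ (using $w(E)=1$) then forces $\int_E(T-1)^2 w\lesssim\delta_w(E)$, so $\int_E|\nabla^2\varphi-\id|^2 w\lesssim\delta_w(E)$, and Cauchy--Schwarz upgrades this to \eqref{eq:hessian_control}. For \eqref{eq:weight_control} I rewrite the \cref{lem:5.1} gap as $\alpha w^{1-1/\alpha}(\nabla g(x)\cdot\nabla\varphi-g(\nabla\varphi))$ with $g\defeq w^{1/\alpha}$; on any compact $Q\compactsubset\Sigma$ the prefactor $w^{1-1/\alpha}$ is bounded below by a positive constant, so the gap controls the Bregman-type divergence of $g$ between $x$ and $\nabla\varphi$, and a Cauchy--Schwarz step leveraging the Hessian control converts this quadratic control into the linear $L^1$ estimate \eqref{eq:weight_control} at the $\delta_w(E)^{1/2}$ scale. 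The main difficulty I foresee is precisely this last conversion: the \cref{lem:5.1} gap is quadratic in $\nabla\varphi-x$ while $|g(\nabla\varphi)-g(x)|$ is only linear, so paying a square root is unavoidable, and the restriction $Q\compactsubset\Sigma$ is essential since near $\partial\Sigma$ both the weight and the quadratic control degenerate.
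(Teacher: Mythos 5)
Your construction of $\varphi$ through \cref{thm:weighted_coupling}, the derivation of \eqref{eq:boundary_control} from the divergence theorem, and the passage from the pointwise AM--GM gaps to the $L^2$ Hessian estimate all track the paper's argument in substance. You split the single application of \cref{lem:quantitative_amgm} into two pieces (eigenvalue AM--GM for $\det(\nabla^2\varphi)\le T^n$, then a weighted AM--GM between $\mu$ and $T$) and recover $\int_E(T-1)^2w\lesssim\delta_w(E)$ by a separate first-moment argument on $h$; this is a minor reorganization and it works, since the pointwise bound $0\le 1+\delta_w(E)-h\le 1+\delta_w(E)$ promotes the $L^1$ control of $1+\delta_w(E)-h$ to an $L^2$ control.

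The genuine gap is in your proof of \eqref{eq:weight_control}. With $g\defeq w^{1/\alpha}$ and $B(x)\defeq\nabla g(x)\cdot\nabla\varphi(x)-g(\nabla\varphi(x))\ge 0$, the \cref{lem:5.1} gap does give $\int_{E\cap Q}B\le C(Q)\,\delta_w(E)$, but $B$ does not control $\abs{g(\nabla\varphi)-g(x)}$. Euler's identity $g(x)=\nabla g(x)\cdot x$ gives
\begin{equation*}
g(\nabla\varphi)-g(x)=\nabla g(x)\cdot(\nabla\varphi-x)-B,
\end{equation*}
so after using the bound on $\int_{E\cap Q}B$ you are still left needing a bound on $\int_{E\cap Q}\abs{\nabla\varphi-x}$. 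That quantity is not available at this stage: the Hessian estimate \eqref{eq:hessian_control} controls only the \emph{derivative} of $\nabla\varphi-x$, and the $L^1$ estimate $\int_E\abs{\nabla\varphi-(x-x_0)}w\lesssim\delta_w(E)^{1/2}$ is obtained only later, in the proof of \cref{thm:main}, \emph{after} invoking the Poincar\'e inequality of \cref{lem:trace_poincare} (which in turn relies on this Proposition as input). Moreover, since $g$ is $1$-homogeneous one has $\nabla^2 g(x)\,x=0$, so $B$ is degenerate in the radial direction (and along every direction of constancy of $w$); the assertion that ``the \cref{lem:5.1} gap is quadratic in $\nabla\varphi-x$'' is therefore false, and there is no uniform pointwise bound of the form $\abs{g(\nabla\varphi)-g(x)}\lesssim B^{1/2}$ even on compact $Q\compactsubset\Sigma$.

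The fix is immediate and you already possess the ingredients. Your estimates $\int_E(\mu-T)^2w\lesssim\delta_w(E)$ and $\int_E(T-1)^2w\lesssim\delta_w(E)$ combine to give $\int_E(\mu-1)^2w\lesssim\delta_w(E)$. Then observe the identity $w(\nabla\varphi)^{1/\alpha}-w^{1/\alpha}=w^{1/\alpha}(\mu-1)$; on $Q\compactsubset\Sigma$ the factor $w^{1/\alpha}$ is bounded above and $w$ is bounded below, so $w^{1/\alpha}\le C(Q)\,w$ there, and a single Cauchy--Schwarz (using $w(E)=1$) yields
\begin{equation*}
\int_{E\cap Q}\abs{w(\nabla\varphi)^{1/\alpha}-w^{1/\alpha}}
\le C(Q)\int_{E\cap Q}\abs{\mu-1}\,w
\le C(Q)\Big(\int_E(\mu-1)^2 w\Big)^{1/2}
\le C(Q)\,\delta_w(E)^{1/2}.
\end{equation*}
This is precisely the paper's argument and bypasses the Bregman divergence entirely.
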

\begin{proof}
    First, assuming that $w\equiv 0$ on $\partial\Sigma$, we prove that the map built in \cref{thm:weighted_coupling} satisfies all the constraints. Then we remove the additional assumption by approximation.
    
    Let us assume $w(\partial\Sigma) = 0$ and let $\varphi:\R^n\to\R$ be the map built in \cref{thm:weighted_coupling}.
    
    Applying the area formula, the arithmetic-geometric mean inequality, and the properties of $\varphi$ described in \cref{thm:weighted_coupling}, we get
    \begin{align}\begin{split}\label{eq:local_weight_prop0}
        1&=w(B_1\cap\Sigma)=w(\nabla\varphi(E))
        \le \int_E \det(\nabla^2\varphi)w(\nabla\varphi)
        =\int_E \det(\nabla^2\varphi)\Big(\frac{w(\nabla\varphi)}{w}\Big)w \\
        &\le 
        \int_E \Big(\frac{\lapl \varphi}n\Big)^n\Big[\Big(\frac{w(\nabla\varphi)}{w}\Big)^{\frac1\alpha}\Big]^{\alpha}w
        \le
        \int_E \bigg(\frac{\lapl\varphi + \alpha \big(\frac{w(\nabla\varphi)}{w}\big)^{\frac1\alpha}}{D}\bigg)^D w 
        \le 
        \int_E \Big(\frac{\Per_w(E)}{D}\Big)^D w \\
        &= \big(1+\delta_w(E)\big)^D \comma
    \end{split}
    \end{align}
    where we used $s^nt^\alpha\le \big(\frac{n s + \alpha t}{n+\alpha}\big)^{n+\alpha}$.
    Thus
    \begin{equation}\label{eq:local_weight_prop1}
        \int_E \left(\Big(\frac{\Per_w(E)}{D}\Big)^D - \det(\nabla^2\varphi)\Big(\frac{w(\nabla\varphi)}{w}\Big) \right)w 
        \le \big(1+\delta_w(E)\big)^D-1 \lesssim \delta_w(E) \fullstop
    \end{equation}
    Applying \cref{lem:quantitative_amgm} with $\lambda=(\rlap{$\overbrace{\phantom{1,\dots, 1}}^n$}1,\dots,1,\alpha)$, $(x_1,\dots,x_n)$ equal to the eigenvalues of $\nabla^2\varphi$, \ $x_{n+1}=(\frac{w(\nabla\varphi)}{w})^{\frac1\alpha}$, and $c=\frac{\Per_w(E)}D$, we obtain
    \begin{equation}\label{eq:local_weight_prop2}
        \abs{\nabla^2 \varphi - \id}^2 + 
        \abs*{\Big(\frac{w(\nabla\varphi)}{w}\Big)^{\frac1\alpha}-1}^2
        \lesssim \Big(\frac{\Per_w(E)}{D}\Big)^D - \det(\nabla^2\varphi)\Big(\frac{w(\nabla\varphi)}{w}\Big)\fullstop
    \end{equation}
    Joining \cref{eq:local_weight_prop1,eq:local_weight_prop2} we obtain
    \begin{equation*}
        \int_E \abs{\nabla^2 \varphi - \id}^2 w 
        +
        \int_E \abs*{\Big(\frac{w(\nabla\varphi)}{w}\Big)^{\frac1\alpha}-1}^2 w
        \lesssim \delta_w(E) \fullstop
    \end{equation*}
    This implies (using Cauchy-Schwarz) \cref{eq:hessian_control} and 
    \begin{equation*}
        \int_E \abs*{\Big(\frac{w(\nabla\varphi)}{w}\Big)^{\frac1\alpha}-1}\,w \lesssim \delta_w(E)^{\frac12} \fullstop
    \end{equation*}
    The weight $w$ is bounded in $Q$ from below and above by constants that depend only on $n,\alpha,\Sigma,Q$ because $w^{\frac1\alpha}$ is concave and $w(B_1\cap\Sigma)=1$. Thus, the last estimate implies \cref{eq:weight_control}.
    
    Proceeding as in \cref{eq:local_weight_prop0}, applying \cref{lem:5.1} and the divergence theorem, we obtain
    \begin{align*}
        1 
        &\le 
        \int_E \bigg(\frac{\lapl\varphi + \alpha \big(\frac{w(\nabla\varphi)}{w}\big)^{\frac1\alpha}}{D}\bigg)^D w
        \le \frac{\Per_w(E)^{D-1}}{D^D}\int_E \bigg(\lapl\varphi + \alpha \Big(\frac{w(\nabla\varphi)}{w}\Big)^{\frac1\alpha}\bigg)w \\
        &\le \frac{\Per_w(E)^{D-1}}{D^D}\int_E \div(w\nabla\varphi)
        =
        \frac{\Per_w(E)^{D-1}}{D^D}\int_{\bou E} (\nabla\varphi\cdot\nu_{\bou E})w\de\Haus^{n-1}  \\
        &\le 
        \frac{\Per_w(E)^D}{D^D}-\frac{\Per_w(E)^{D-1}}{D^D}\int_{\bou E} (1-\abs{\nabla\varphi})w\de\Haus^{n-1}\comma
    \end{align*}
    where in the last step we applied $\nabla\varphi\cdot\nu \le \abs{\nabla\varphi}\le 1$ (recall that $\nabla\varphi\in B_1\cap\Sigma$).
    The estimate \cref{eq:boundary_control} follows rearranging the terms.
    
    It remains to drop the assumption $w(\partial\Sigma)=0$.
    Thanks to \cref{lem:weight_approximation_zero_trace} we can find a sequence $(w_i)_{i\in\N}$ of $\alpha$-homogeneous weights such that $w_i\equiv 0$ on $\partial\Sigma$, and $w_i\to w$ uniformly on compact subsets of $\Sigma$. Let $\varphi_i:\R^n\to\R$ be the map built in \cref{thm:weighted_coupling} when the weight is $w_i$.

    Notice that $\delta_{w_i}\to\delta_w$ as $i\to\infty$ and therefore the family $(\varphi_i)_{i\in\N}$ is bounded in $C^{1,1}$. Up to subsequence, we can assume that $\varphi_i\to\varphi$ locally in $C^1$, where $\varphi:\R^n\to\R$ is a $C^{1,1}$-function. 
    Following the method adopted in the proof of \cref{thm:weighted_coupling}, we can establish $\nabla\varphi(\R^n)=\nabla\varphi(E)=\Sigma\cap B_1$. The properties \cref{eq:hessian_control,eq:boundary_control} follow directly from the convergence of weights and maps.
    On the other hand, passing to the limit in~\cref{eq:weight_control}, we need to avoid $\nabla\varphi\in\partial\Sigma$ because in that case $w_i(\nabla\varphi_i)$ may not converge to $w(\nabla\varphi)$. Hence, we obtain
    \begin{equation}\label{eq:tmp6721}
        \int_{E\cap Q\cap \{\nabla\varphi\not\in\partial\Sigma\}}
        \abs*{w(\nabla\varphi)^{\frac1\alpha}-w^{\frac1\alpha}} \le C \delta_w(E)^{\frac12} \fullstop
    \end{equation}
    The area formula for Lipschitz functions implies $\nabla^2\varphi$ is singular almost everywhere in $\{\nabla\varphi\in\partial\Sigma\}$, in particular $\abs{\nabla^2\varphi-\id}\gtrsim 1$. Therefore \cref{eq:hessian_control} implies 
    \begin{equation*}
        w\big(Q\cap\{\nabla\varphi\in\partial\Sigma\}\big) \lesssim \delta_w(E)^{\frac12}\comma
    \end{equation*}
    and this is sufficient to deduce \cref{eq:weight_control} from \cref{eq:tmp6721}.
\end{proof}

\section{Ruling out translational freedom}\label{sec:ruling_out_translations}

Recall that we assume $\Sigma=\R^k\times\widetilde\Sigma$ where $\widetilde\Sigma\subseteq\R^{n-k}$ is an open convex cone containing no lines.
Moreover, we can split $\R^{n-k}$ as $\R^h\times\R^{n-k-h}$, where $\R^h$ identifies the directions along which $w$ is constant, namely, the vectors $\xi$ such that $w(x+t\xi)=w(x)$ whenever both sides make sense ($x\in\Sigma$, $t\in\R$).

Let 
\begin{equation}\label{eq:LCE}
\begin{split}\mathcal L &\defeq \R^k\times \{0_{\R^{n-k}}\}\,\textrm{ be the subspace of $\mathcal{L}$ines contained in $\Sigma$,}\\
\mathcal C & \defeq\{0_{\R^k}\}\times\R^h\times\{0_{\R^{n-k-h}}\}\,\textrm{ be the subspace of direction of $\mathcal C$onstancy for $w$,}\\
\mathcal E &\defeq \{0_{\R^{h+k}}\}\times\R^{n-k-h}\,\textrm{ be everything $\mathcal E$lse.}
\end{split}
\end{equation}
Notice that if $\mathcal C\neq \{0_{\R^n}\}$ then $w\not\equiv 0$ on $\partial\Sigma$.

\Cref{prop:magic_compact_set} corresponds to the strategy \cref{it:intro_nonconstant} described in the introduction to prove $\abs{\widetilde x_0} \le C \delta_w(E)^{\frac12}$.
\Cref{lem:linear_growth_ball} implements the strategy \cref{it:intro_ball_growth}. Finally, \cref{it:intro_ball_growth} and \cref{it:intro_constant_hard} are condensed in \cref{prop:controlling_constant_directions}.

We will need the following two technical lemmas.
\begin{lemma}\label{lem:elementary_boring_result}
    Let $\eta:\R\to\R$ be any function. Then for any $a<b$ and $\eps>0$ it holds
    \begin{equation*}
        \int_a^b\abs{\eta(t+\eps)-\eta(t)} \de t\ge
        \eps \Big(\inf_{\abs{t-b}\le\eps}\eta(t) - \sup_{\abs{t-a}\le \eps}\eta(t) \Big) \fullstop
    \end{equation*}
\end{lemma}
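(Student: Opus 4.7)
The plan is to reduce to a short computation after a trivial case split.

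First, if $b-a<\eps$, then the intervals $[a-\eps,a+\eps]$ and $[b-\eps,b+\eps]$ overlap (indeed $b-\eps<a<a+\eps$), so $\inf_{\abs{t-b}\le\eps}\eta(t)\le\sup_{\abs{t-a}\le\eps}\eta(t)$. Consequently the right-hand side of the claim is non-positive while the left-hand side is non-negative, and the inequality holds trivially.

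Assume then $b-a\ge\eps$. The starting point is the pointwise inequality $\abs{\eta(t+\eps)-\eta(t)}\ge\eta(t+\eps)-\eta(t)$. Integrating on $[a,b]$ and translating in the first term,
\begin{equation*}
    \int_a^b \abs{\eta(t+\eps)-\eta(t)}\de t
    \ge \int_{a+\eps}^{b+\eps}\eta(t)\de t - \int_a^b \eta(t)\de t
    = \int_b^{b+\eps}\eta(t)\de t - \int_a^{a+\eps}\eta(t)\de t \comma
\end{equation*}
where the cancellation of the common subinterval $[a+\eps,b]$ uses precisely $a+\eps\le b$. Bounding the first integral below by $\eps\inf_{[b,b+\eps]}\eta$ and the second above by $\eps\sup_{[a,a+\eps]}\eta$, then enlarging the domains of the $\inf$ and $\sup$ to $[b-\eps,b+\eps]$ and $[a-\eps,a+\eps]$ respectively (which only weakens the bound), yields the stated inequality.

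There is no hard step here: the lemma is purely computational, and the only delicate point is bookkeeping the case $b-a<\eps$, which turns out to be automatic because the two neighborhoods appearing on the right-hand side overlap.
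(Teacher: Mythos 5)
Your argument is correct in content and takes a genuinely different route from the paper's. The paper proves the lemma by a telescoping estimate: setting $m=\lfloor(b-a)/\eps\rfloor$, it cuts $[a,a+m\eps]$ into $m$ intervals of length $\eps$, applies the triangle inequality to replace the sum of $m$ consecutive differences by the single difference $\abs{\eta(a+t+m\eps)-\eta(a+t)}$, and then bounds this quantity pointwise before integrating over $t\in[0,\eps]$. You instead drop the absolute value, change variables in the first term, and cancel the common subinterval $[a+\eps,b]$, which isolates the two boundary contributions directly. Your route is shorter, handles the degenerate case $b-a<\eps$ separately (the paper's is automatic, since then $m=0$), and even yields a marginally stronger conclusion, with the $\inf$ over $[b,b+\eps]$ and the $\sup$ over $[a,a+\eps]$ rather than over the doubled windows.

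There is one technical caveat worth flagging. The identity
\begin{equation*}
    \int_a^b\big(\eta(t+\eps)-\eta(t)\big)\de t=\int_{a+\eps}^{b+\eps}\eta(t)\de t-\int_a^b\eta(t)\de t
\end{equation*}
tacitly assumes $\eta$ is integrable on $[a,b+\eps]$: for a general (possibly non-integrable or non-measurable) $\eta$, the two integrals on the right need not be individually defined even when the left-hand side and the statement's left-hand side are. The paper's telescoping proof never integrates $\eta$ on its own, so it applies literally to ``any function'' as the lemma claims. Your argument is easily repaired---for instance, apply it to the truncations $\eta_M\defeq\min(M,\max(-M,\eta))$, observe that truncation only decreases the left-hand side while the right-hand side converges as $M\to\infty$; or note that once the left-hand side is finite and the $\inf$/$\sup$ are finite, local integrability of $\eta$ follows---and in the actual application (where $\eta$ is the restriction of the continuous function $w^{1/\alpha}$ to a segment) the issue is vacuous. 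But as written, the proof has a slightly narrower scope than the lemma's stated hypotheses.
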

\begin{proof}
    Let $m=\lfloor\frac{\abs{a-b}}{\eps}\rfloor$, where $\lfloor x\rfloor$ denotes the integer part of $x$. 
    The triangle inequality implies
    \begin{align*}
        \int_a^b \abs{\eta(t+\eps)-\eta(t)} \de t
        \ge\int_0^\eps \sum_{i=1}^m \abs{\eta(a+t+i\eps)-\eta(a+t+(i-1)\eps)} \de t
        \ge \int_0^\eps \abs{\eta(a+t+m\eps)-\eta(a+t)}\de t\comma
    \end{align*}
    and the desired inequality follows since $\abs{a+t+m\eps-b}\le \eps$ for any $0\le t\le \eps$.
\end{proof}

The second one reads as follows.

\begin{lemma}\label{lem:derivative_volume}
    Given $f\in BV_{loc}(\R^n)$ and $\eta\in L^{\infty}(\R^n)$ with compact support, the function $\psi:\R^n\to\R$ defined as
    \begin{equation*}
        \psi(\xi) \defeq \int_{\R^n} \eta(x) f(x+\xi)\de x
    \end{equation*}
    is locally Lipschitz continuous, hence differentiable almost everywhere, and its gradient satisfies (almost everywhere)
    \begin{equation*}
        \nabla \psi(\xi) = \int_{\R^n} \eta(x-\xi) \de \diff f(x) \fullstop
    \end{equation*}
\end{lemma}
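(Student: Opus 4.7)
The plan is to prove local Lipschitz continuity via the $L^1$-continuity of translations of BV functions, and to derive the gradient formula by smoothing $\eta$ and applying the divergence identity \cref{eq:intro_div_theorem}. After the change of variables $y = x + \xi$, we have $\psi(\xi) = \int \eta(y - \xi) f(y) \de y$, so that
\[\psi(\xi + h) - \psi(\xi) = \int \eta(z - \xi)\bigl[f(z+h) - f(z)\bigr] \de z.\]
To get a Lipschitz bound on an arbitrary bounded open set $U \subseteq \R^n$, I would fix $\phi \in C_c^\infty(\R^n)$ with $\phi \equiv 1$ on $U + \operatorname{supp}\eta + B_1$. For $\xi \in U$ and $\abs{h} \le 1$, the integrand above is supported where $f \equiv \phi f$, and the standard BV estimate $\int \abs{g(\cdot + h) - g(\cdot)} \de z \le \abs{h} \cdot \abs{\diff g}(\R^n)$ applied to $g = \phi f$ yields
\[\abs{\psi(\xi + h) - \psi(\xi)} \le \norm{\eta}_\infty \abs{h} \cdot \abs{\diff(\phi f)}(\R^n).\]
Thus $\psi$ is locally Lipschitz, and by Rademacher's theorem it is differentiable almost everywhere.

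For the formula, I would approximate $\eta$ by $\eta_\epsilon \defeq \eta * \rho_\epsilon$, where $\rho_\epsilon$ is a standard family of mollifiers; the $\eta_\epsilon$ are smooth, uniformly bounded by $\norm{\eta}_\infty$, share a common compact support for small $\epsilon$, and converge to $\eta$ in $L^1(\R^n)$. Defining $\psi_\epsilon$ analogously, differentiation under the integral sign (justified for the smooth, compactly supported $\eta_\epsilon$) gives
\[\partial_{\xi_i} \psi_\epsilon(\xi) = -\int (\partial_i \eta_\epsilon)(y - \xi) f(y) \de y,\]
and applying \cref{eq:intro_div_theorem} with the test field $X(y) = \eta_\epsilon(y - \xi) e_i$ transforms this into
\[\nabla \psi_\epsilon(\xi) = \int \eta_\epsilon(y - \xi) \de \diff f(y).\]

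It remains to pass to the limit as $\epsilon \to 0$ in $L^1_{loc}(\R^n)$ on both sides. Setting $G(\xi) \defeq \int \eta(y - \xi) \de \diff f(y)$, a Fubini argument exploiting the common compact support of the $\eta_\epsilon$, the local finiteness of the Radon measure $\abs{\diff f}$, and the $L^1_{loc}$-integrability of $f$ gives $\psi_\epsilon \to \psi$ and $\nabla \psi_\epsilon \to G$ in $L^1_{loc}$. Consequently, the distributional gradient of $\psi$ equals $G$, and by Lipschitz regularity it coincides almost everywhere with the classical gradient. The main subtlety is precisely this last passage to the limit: it requires upgrading the $L^1$-convergence $\eta_\epsilon \to \eta$ to an $L^1_{loc}$-convergence of the corresponding integrals against $\diff f$, which is effected via the local finiteness of $\abs{\diff f}$ as a Radon measure.
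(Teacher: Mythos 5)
Your proof is correct, but it takes a genuinely different route from the paper's. The paper establishes both claims in one stroke by a duality argument: it tests $\psi$ against an arbitrary $\varphi \in C^\infty_c(\R^n)$, and applying Fubini together with the BV divergence identity shows that the distributional gradient of $\psi$ is $G(\xi) \defeq \int \eta(x-\xi)\de\diff f(x)$; since $G$ is visibly locally bounded (as $\eta$ is bounded with compact support and $\abs{\diff f}$ is locally finite), it then concludes that $\psi$ is locally Lipschitz and that $G$ is also its classical gradient a.e.\ — the Lipschitz regularity falls out \emph{a posteriori} from the bound on the weak gradient. You instead split the work: you prove local Lipschitz continuity \emph{first}, directly from the $L^1$ translation estimate for BV functions (with a clean cutoff argument to localize), and then derive the formula for $\nabla\psi$ by mollifying $\eta$ and passing to the $L^1_{loc}$ limit. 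The paper's approach is more compact and avoids any regularization of $\eta$; yours is more constructive and makes the a.e.\ identification of the weak and classical gradients completely transparent via Rademacher. Both routes rest on the same two ingredients (Fubini and the BV integration-by-parts formula), just deployed in a different order. One small remark: \cref{eq:intro_div_theorem} as printed in the paper is missing a minus sign relative to the standard convention $\int f \div X = -\int X\cdot\de\diff f$ (as is consistent with the paper's own formula $\diff\chi_E = -\nu_{\bou E}\Haus^{n-1}\restriction\bou E$); your final gradient formula carries the correct sign, so you have in effect used the corrected identity, which is the right thing to do.
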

\begin{proof}
    Let $\varphi\in C^{\infty}_c(\R^n)$ be a smooth function with compact support.
    From multiple applications of Fubini's theorem and the divergence theorem for BV functions \cref{eq:intro_div_theorem}, it follows
    \begin{equation*}
        \int_{\R^n}\nabla\varphi(\xi)\psi(\xi)\de \xi = 
        -\int_{\R^n}\varphi(\xi)\Big(\int_{\R^n}\eta(x-\xi)\de\widetilde\nabla f(x)\Big)\de\xi \fullstop
    \end{equation*}
    Since $\int_{\R^n}\eta(x-\xi)\de\widetilde\nabla f(x)$ is locally bounded, $\psi$ is locally Lipschitz continuous and $\int_{\R^n}\eta(x-\xi)\de\widetilde\nabla f(x)$ is its weak gradient (hence its classical gradient almost everywhere).
\end{proof}

Next, we prove the following.

\begin{proposition}\label{prop:magic_compact_set}
    Consider $n,\,\alpha,\,\Sigma,\,w$ satisfying \cref{eq:assumptions}, and let $\mathcal L$, $\mathcal C$, and $\mathcal E$ be as in \cref{eq:LCE}.
    There exists an $\hat\eps=\hat\eps(n,\alpha,\Sigma,w)>0$ and a compact set $\hat Q\subseteq B_{\frac12}\cap\Sigma$ (that depends on $n,\alpha,\Sigma, w$) such that
    \begin{enumerate}[label=(\arabic*)]
        \item \label{it:magic_compact_set_far} $d(\hat Q, \partial\Sigma) > 2\hat\eps$, 
        \item \label{it:magic_compact_set} For any $\xi\in B_{\hat\eps}$ it holds
        \begin{equation*}
            \int_{\hat Q} \abs{w^{\frac1\alpha}(x+\xi)-w^{\frac1\alpha}(x)}\de x \ge \hat\eps\abs{\pi_{\mathcal E}(\xi)} \comma
        \end{equation*}
        where $\pi_{\mathcal E}:\R^n\to\mathcal E$ is the orthogonal projection onto $\mathcal E$.
    \end{enumerate}
\end{proposition}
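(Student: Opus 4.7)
Take $\hat Q := \{x \in \Sigma \cap B_{1/2} : d(x,\partial\Sigma) \ge 3\hat\eps\}$ for a sufficiently small $\hat\eps > 0$ depending on $n,\alpha,\Sigma,w$, so that \ref{it:magic_compact_set_far} is immediate. For \ref{it:magic_compact_set}, first split $\xi = \xi' + \eta$ with $\xi' \in \mathcal L \oplus \mathcal C$ and $\eta = \pi_{\mathcal E}(\xi)$. The weight is $\mathcal C$-translation invariant by definition, and it is also $\mathcal L$-translation invariant, since for any $v \in \mathcal L$ the map $t \mapsto w^{1/\alpha}(x+tv)$ is concave and nonnegative on all of $\R$ and hence constant. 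Because $\abs{\xi} \le \hat\eps \le d(\hat Q,\partial\Sigma)/3$ keeps every translate involved inside $\Sigma$, one gets $w^{1/\alpha}(x+\xi) = w^{1/\alpha}(x+\eta)$ for all $x \in \hat Q$. The problem thus reduces to proving $\int_{\hat Q} \abs{w^{1/\alpha}(x + \rho e) - w^{1/\alpha}(x)}\,dx \ge \hat\eps \rho$ for every unit $e \in \mathcal E$ and every $\rho \in (0,\hat\eps]$.

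For unit $e \in \mathcal E$, define $J(e) := \int_{\hat Q} \abs{\nabla w^{1/\alpha}(x) \cdot e}\,dx$, noting that $\nabla w^{1/\alpha}$ exists a.e.\ (by concavity), is bounded on compact subsets of $\Sigma$, and is $0$-homogeneous. The key claim is: for $\hat\eps$ small enough, $J(e) \ge c_0 > 0$ uniformly on the unit sphere of $\mathcal E$ (a compact set since $\dim\mathcal E < \infty$). Arguing by contradiction, take $\hat\eps_j \downarrow 0$ and unit $e_j \in \mathcal E$ with $J_{\hat\eps_j}(e_j) \to 0$, and extract $e_j \to e_\infty$. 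Any compact $K \compactsubset \Sigma \cap B_{1/2}$ lies in $\hat Q_{\hat\eps_j}$ for $j$ large, so the triangle inequality forces $\norm{\nabla w^{1/\alpha} \cdot e_\infty}_{L^1(K)} = 0$. Hence $\nabla w^{1/\alpha} \cdot e_\infty = 0$ a.e.\ on $\Sigma \cap B_{1/2}$, and by $0$-homogeneity (via spherical coordinates) a.e.\ on $\Sigma$. The Lipschitz regularity of $w^{1/\alpha}$ inside $\Sigma$ together with its continuity on $\overline\Sigma$ then upgrade this to $w^{1/\alpha}(y + te_\infty) = w^{1/\alpha}(y)$ whenever both sides make sense, i.e.\ $e_\infty \in \mathcal L \oplus \mathcal C$, contradicting $e_\infty \in \mathcal E$ with $\abs{e_\infty}=1$.

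Setting $I(e,\rho) := \int_{\hat Q} \abs{w^{1/\alpha}(x+\rho e) - w^{1/\alpha}(x)}\,dx$, the integrand $\abs{w^{1/\alpha}(x+\rho e) - w^{1/\alpha}(x)}/\rho$ converges a.e.\ to $\abs{\nabla w^{1/\alpha}(x) \cdot e}$ as $\rho \to 0^+$ and is dominated by the Lipschitz constant of $w^{1/\alpha}$ near $\hat Q$. Dominated convergence combined with a compactness argument on the unit sphere of $\mathcal E$ yields $I(e,\rho)/\rho \to J(e)$ uniformly in $e$ as $\rho \to 0^+$. Shrinking $\hat\eps$ so that $I(e,\rho) \ge (c_0/2)\rho$ for all unit $e$ and $\rho \le \hat\eps$, and finally taking $\hat\eps \le c_0/2$, delivers the required
\[\int_{\hat Q} \abs{w^{1/\alpha}(x+\xi) - w^{1/\alpha}(x)}\,dx \ge \hat\eps \abs{\pi_{\mathcal E}(\xi)}.\]

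The delicate point is the positivity claim: $0$-homogeneity has to be used to propagate a property from $\hat Q$ (which necessarily misses a boundary layer of $\partial\Sigma$) to all of $\Sigma$, and then the a.e.\ vanishing of the directional derivative must be converted into genuine constancy of $w^{1/\alpha}$ along $e_\infty$, thereby contradicting $e_\infty \in \mathcal E$.
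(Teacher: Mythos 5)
Your proof is correct, but takes a genuinely different route from the paper. The paper avoids derivatives entirely: for each direction $\theta\in\partial B_1\cap\mathcal E$ it selects two points $p_\theta,p'_\theta$ along $\theta$ with $w(p_\theta)\neq w(p'_\theta)$, applies the one-dimensional \cref{lem:elementary_boring_result} to bound the finite-difference integral over a thin tube around $[p_\theta,p'_\theta]$, and then covers the compact set $\partial B_1\cap\mathcal E$ by finitely many such tubes, taking $\hat Q$ to be their union. You instead linearize: you introduce $J(e)=\int_{\hat Q}\abs{\nabla w^{1/\alpha}\cdot e}$, establish a uniform lower bound on the unit sphere of $\mathcal E$ by a compactness--contradiction argument (using a Fubini--Rademacher step to convert ``a.e.\ vanishing directional derivative'' into genuine constancy of $w^{1/\alpha}$ along $e_\infty$, which contradicts $e_\infty\in\mathcal E$), and then recover the finite-difference bound from the derivative bound. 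Both approaches exploit compactness of $\partial B_1\cap\mathcal E$, but at different stages; your $\hat Q$ is simpler and more canonical (a boundary layer of $\Sigma\cap B_{1/2}$), while the paper's argument is more elementary, never invoking differentiability of the merely concave $w^{1/\alpha}$. One phrase worth tightening: ``dominated convergence combined with a compactness argument yields uniform convergence of $I(e,\rho)/\rho$'' is too quick, since dominated convergence gives only pointwise convergence in $e$. To make it precise, note that for each $\rho>0$ the map $e\mapsto I(e,\rho)/\rho$ is Lipschitz on $\partial B_1\cap\mathcal E$ with a constant controlled by the Lipschitz constant of $w^{1/\alpha}$ on a neighborhood of $\hat Q$ times $\abs{\hat Q}$, so Arzel\`a--Ascoli upgrades pointwise to uniform convergence; alternatively, a Fatou-along-sequences argument directly yields the needed uniform-in-$e$ lower bound for small $\rho$.
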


\begin{proof}
    Given a compact set $Q\subseteq \Sigma$, an $\eps>0$ and a subset $U\subseteq \partial B_1\cap \mathcal E$ we say that the pair $(U,\eps)$ is compatible with the compact set $Q$ if $d(Q,\partial\Sigma) > 2\eps$ and
    \begin{equation*}
        \int_{Q} \abs{w^{\frac1\alpha}(x+\xi)-w^{\frac1\alpha}(x)}\de x \ge \eps\abs{\xi}
    \end{equation*}
    for any $\xi\in B_\eps$ such that $\xi/\abs{\xi}\in U$
    
    Given two points $p_1,p_2\in B_{\frac12}\cap\Sigma$ and $r>0$, let us define the compact set 
    \begin{equation*}
        Q(p_1, p_2, r) \defeq \big\{x\in \Sigma\cap \overline B_{\frac12}:\ d(x, \partial\Sigma)\ge r,\ d(x, [p_1,p_2])\le r\big\}\comma
    \end{equation*}
    where $[p_1,p_2]$ denotes the segment between $p_1$ and $p_2$. 
    
    Given a direction $\theta \in \partial B_1\cap \mathcal E$, take two points $p_\theta, p'_\theta \in B_{\frac12}\cap\Sigma$ such that $w(p_\theta)\not=w(p'_\theta)$ and $p_\theta-p'_\theta$ is a multiple of $\theta$ (the existence of the two points follows from $\theta\in\mathcal E$).
    Applying \cref{lem:elementary_boring_result}, we can find a small $r_\theta>0$, a small $\eps_\theta>0$ and a neighborhood $U_\theta\subseteq \partial B_1\cap \mathcal E$ of $\theta$ such that $(\eps_\theta, U_\theta)$ is compatible with $Q(p_\theta, p'_\theta, r_\theta)$. Since $\partial B_1\cap\mathcal E$ is compact, we can find $U_{\theta_1},\dots,U_{\theta_k}$, with $k\in\N$, that cover $\partial B_1\cap\mathcal E$. Hence, it is not hard to check that $\hat\eps\defeq \min(\eps_{\theta_1},\dots,\eps_{\theta_k})$ and
    \begin{equation*}
        \hat Q \defeq \bigcup_{i=1}^k Q(p_{\theta_i}, p'_{\theta_i}, r_{\theta_i})
    \end{equation*}
    satisfy \cref{it:magic_compact_set_far,it:magic_compact_set} with the additional constraint $\xi\in\mathcal E$.
    The whole statement follows because the left-hand side of \cref{it:magic_compact_set} does not change if we replace $\xi$ with $\pi_{\mathcal E}(\xi)$.
\end{proof}

We will also need the following.

\begin{lemma}\label{lem:linear_growth_ball}
    Consider $n,\,\alpha,\,\Sigma,\,w$ satisfying \cref{eq:assumptions}, and let $\mathcal L$, $\mathcal C$, and $\mathcal E$ be as in \cref{eq:LCE}.
    Then, there is a small constant $c_1=c_1(n,\alpha,\Sigma, w)>0$ such that
    \begin{equation*}
        w(B_1(\xi)\cap\Sigma) \ge w(B_1\cap\Sigma) + c_1\abs{\xi}\comma
    \end{equation*}
for all $\xi\in B_{c_1}\cap\mathcal C$ satisfying $d(\frac{\xi}{\abs{\xi}}, \Sigma) <c_1$.
\end{lemma}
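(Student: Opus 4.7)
The plan is to prove the linear growth by computing the derivative of $f(\xi) := w(B_1(\xi)\cap\Sigma)$ along constancy directions and exploiting that the bulk contribution vanishes when $e \in \mathcal{C}$, leaving only a boundary term which is strictly positive precisely when $e$ points into $\overline\Sigma$.

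First I would write $f(\xi) = \int_{B_1} g(y+\xi)\,\de y$ where $g := w\chi_\Sigma \in BV_{\mathrm{loc}}(\R^n)$ (local BV since $w$ is locally Lipschitz on $\overline\Sigma$, being the $\alpha$-th power of the locally Lipschitz concave function $w^{1/\alpha}$, and $\chi_\Sigma$ is BV). Its distributional gradient decomposes as $\diff g = \chi_\Sigma\nabla w\,\de\Leb^n - w\,\nu_\Sigma\,\de\Haus^{n-1}|_{\partial\Sigma}$, so \cref{lem:derivative_volume} yields that $f$ is locally Lipschitz with
\[
e\cdot\nabla f(\xi) = \int_{B_1(\xi)\cap\Sigma}e\cdot\nabla w\,\de x - \int_{B_1(\xi)\cap\partial\Sigma}w(e\cdot\nu_\Sigma)\,\de\Haus^{n-1}
\]
for every $e\in\R^n$ and a.e.\ $\xi\in\R^n$. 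For $e\in\mathcal C$ the first integral vanishes (since $w$ is constant along $e$, so $e\cdot\nabla w=0$ a.e.\ in $\Sigma$), giving $\Phi(e,\xi) := e\cdot\nabla f(\xi) = -\int_{B_1(\xi)\cap\partial\Sigma} w\,(e\cdot\nu_\Sigma)\,\de\Haus^{n-1}$.

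Next I would establish the uniform lower bound $\Phi(e,0)\ge c_0>0$ for all $e\in\partial B_1\cap\mathcal C$ with $d(e,\overline\Sigma)<c_1$. For $e\in\overline\Sigma\cap\mathcal C\cap\partial B_1$, the supporting-hyperplane property of the convex cone $\Sigma$ at its apex gives $e\cdot\nu_\Sigma\le 0$ $\Haus^{n-1}$-a.e.\ on $\partial\Sigma$, hence $\Phi(e,0)\ge 0$. Strict positivity $\Phi(e,0)>0$ is the heart of the matter: if it failed, the nonpositive integrand would vanish a.e., forcing $e\cdot\nu_\Sigma\equiv 0$ on $\{w>0\}\cap\partial\Sigma\cap B_1$, i.e.\ $e$ tangent to every face of $\partial\Sigma$ carrying positive $w$-mass; combined with the constancy of $w$ along $e$ and the no-lines property of $\widetilde\Sigma$, this would force $e\in\mathcal L$, contradicting $\mathcal C\perp\mathcal L$ and $|e|=1$. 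Continuity of $e\mapsto\Phi(e,0)$ together with compactness of $\overline\Sigma\cap\mathcal C\cap\partial B_1$ then yields $\Phi(e,0)\ge 2c_0$ on that compact set, and the bound $\Phi(e,0)\ge c_0$ extends by continuity to the neighborhood $\{e\in\partial B_1\cap\mathcal C : d(e,\overline\Sigma)<c_1\}$ for $c_1$ small enough.

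Finally, continuity of $\xi\mapsto\Phi(e,\xi)$ at $\xi=0$ (uniform in $e$) upgrades the bound to $\Phi(e,\xi)\ge c_0/2$ whenever $|\xi|<c_1$ (shrinking $c_1$ further if needed), and integrating along the segment from $0$ to $\xi=|\xi|e$ produces
\[
f(\xi)-f(0) = |\xi|\int_0^1 \Phi(e,t\xi)\,\de t \;\geq\; \tfrac{c_0}{2}\,|\xi|,
\]
which yields the claim after renaming constants. The main obstacle I expect is the strict positivity step: note that $\alpha$-homogeneity together with constancy of $w$ along $e$ forces $w(e)=0$, so every admissible $e$ automatically lies on $\partial\Sigma$ and is tangent to at least one face; the delicate check is that $e$ cannot be tangent to \emph{every} face of $\partial\Sigma$ carrying positive $w$-mass simultaneously, and this is exactly the point at which the no-lines assumption on $\widetilde\Sigma$ must intervene.
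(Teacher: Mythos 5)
Your strategy coincides with the paper's: differentiate the volume functional $\xi\mapsto w(B_1(\xi)\cap\Sigma)$ via \cref{lem:derivative_volume}, observe that along $e\in\mathcal C$ only a boundary term survives, prove strict positivity of that boundary integral on $\overline\Sigma\cap\mathcal C\cap\partial B_1$ via convexity and the no-lines property, and conclude by compactness, continuity, and integration along the segment (equivalently, the first-order Taylor expansion at $\xi=0$ that the paper uses). The strict-positivity step is correctly identified as the crux, and your sketch of it is sound; your observation that homogeneity plus constancy forces $w(e)=0$, so $\mathcal C\cap\Sigma=\emptyset$, is a nice point that streamlines the case analysis.

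The one genuine gap is in the very first step. You claim $g:=w\chi_\Sigma\in BV_{\mathrm{loc}}(\R^n)$ because $w$ is \emph{locally Lipschitz on $\overline\Sigma$}, being the $\alpha$-th power of the locally Lipschitz concave function $w^{1/\alpha}$. This is false: concave functions are locally Lipschitz only in the \emph{interior} of their domain, and a nonnegative $1$-homogeneous concave function on a cone typically has unbounded gradient near $\partial\Sigma$. A concrete counterexample is $\Sigma=\{x_1>0,\,x_2>0\}\subseteq\R^2$, $\alpha=1$, $w=w^{1/\alpha}=\sqrt{x_1x_2}$: here $|\nabla w|\sim(x_1x_2)^{-1/2}\to\infty$ along the axes, so $w$ is not locally Lipschitz on $\overline\Sigma$. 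Consequently the membership $g\in BV_{\mathrm{loc}}$ and the decomposition of $\diff g$ require a separate justification (one can verify $w\in W^{1,1}_{\mathrm{loc}}$ by homogeneity, but it is not free). The paper sidesteps the issue entirely: it applies \cref{lem:derivative_volume} with $f=\chi_\Sigma$ (trivially BV) and $\eta=w\chi_{B_1}$, where $w$ has first been \emph{extended} to $\Sigma+\mathcal C$ by constancy along $\mathcal C$. Since $\eta$ only needs to be $L^\infty$ with compact support, no regularity of $w$ beyond continuity is required, and the resulting $\psi(\xi)=\int\eta(x)\chi_\Sigma(x+\xi)\,\de x$ agrees with $w(B_1(\xi)\cap\Sigma)$ precisely for $\xi\in\mathcal C$, which is all that is needed. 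Swapping the roles of $\eta$ and the BV function in this way is the correct repair for your argument.
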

\begin{proof}
    For any $\xi\in\mathcal C$ and any $x\in\Sigma$ with $x+\xi\not\in\Sigma$, let us define $w(x+\xi)\defeq w(x)$ (we are extending the domain of $w$ exploiting its constancy in certain directions). The definition is consistent because of the condition $\xi\in\mathcal C$.
    
    Notice that, for any $\xi\in\mathcal C$, it holds
    \begin{equation*}
        w(B_1(\xi)\cap\Sigma) = \int_{B_1}\characteristic\Sigma(x+\xi)w(x+\xi)\de x
        = \int_{B_1}w(x)\characteristic{\Sigma}(x+\xi)\de x \fullstop
    \end{equation*}
    Applying \cref{lem:derivative_volume} with $f=\characteristic\Sigma$ and $\eta=w\,\chi_{B_1}$, we deduce that for $\xi\in\mathcal C$ it holds
    \begin{equation}\label{eq:tmp5461}
        w(B_1(\xi)\cap\Sigma) - w(B_1\cap\Sigma) \ge - \overline\nu\cdot\xi -\smallo(\abs{\xi}) \comma
    \end{equation}
    where 
    \begin{equation*}
        \overline\nu\defeq \int_{\partial\Sigma\cap B_1} \nu_{\partial\Sigma}\,w\de\Haus^{n-1}
    \end{equation*}
    and $\nu_{\partial\Sigma}$ is the \emph{outer} normal to $\partial\Sigma$.
    If $\xi\in\mathcal C\cap\Sigma$, it holds $\nu_\Sigma \cdot \xi<0$ at any point and therefore $\overline \nu\cdot\xi < 0$ (recall that if $\mathcal C\not=\{0\}$ then $w\not\equiv 0$ on $\partial\Sigma$). 
    The same strict inequality holds also if $\xi\in\mathcal C\cap\partial\Sigma\setminus\{0\}$ (because by definition $\mathcal C\cap\Sigma$ does not contain lines). 
    Hence, there is a small $c>0$ such that for any $\xi\in\mathcal C\cap\overline\Sigma$ it holds
    \begin{equation}\label{eq:tmp67992}
        \overline\nu\cdot\xi \le -c\abs{\xi} \fullstop
    \end{equation}
    The two estimates \cref{eq:tmp5461,eq:tmp67992}, together with a simple continuity argument, yield the desired result.
\end{proof}

Finally, we prove the following.
\begin{proposition}\label{prop:controlling_constant_directions}
    Consider $n,\,\alpha,\,\Sigma,\,w$ satisfying \cref{eq:assumptions}, and let $\mathcal L$, $\mathcal C$, and $\mathcal E$ be as in \cref{eq:LCE}.
    There is a small constant $c=c(n,\alpha,\Sigma, w)>0$ such that for any $\xi\in B_c\cap\mathcal C$ at least one of the following two statements is true:
    
\vspace{1mm}

    \begin{enumerate}[ref=(\arabic*)]
        \item It holds $\abs{w(\Sigma\cap B_1(\xi))-w(\Sigma\cap B_1)}\ge c\abs{\xi}$.\label{it:linear_growth_ball}
        
\vspace{2mm}        
        
        \item There exists a set $S\subseteq \langle \xi\rangle ^{\perp}$ such that, setting $t_0(s)$ and $t_1(s)$ so that $\Sigma_s \defeq \Sigma\cap\{s+t\xi:t\in\R\} = \big\{s+t\frac{\xi}{\abs{\xi}}:\ t_0(s) < t < t_1(s)\big\}$, the following hold.
        \label{it:good_slicing}
        \begin{enumerate}[ref=(\alph*)]
            \item For any $s\in S$, it holds $-\infty < t_0(s) < t_0(s) + c < t_1(s) < +\infty$.\label{it:loca}
            \item For any $s\in S$ and any $z\in\Sigma_s$, it holds $w(z)>c$ (notice that $w$ is constant on $\Sigma_s$). \label{it:locb}
            \item It holds $\Haus^{n-1}(S) > c$. \label{it:locc}
            \item For any $s\in S$ and any $t<t_0(S)$ it holds $d(s + t\frac{\xi}{\abs{\xi}}, \Sigma) > c (t_0-t)$. \label{it:locd}
            \item For any $s\in S$ and any $t>t_1(S)$ it holds $d(s + t\frac{\xi}{\abs{\xi}}, \Sigma) > c (t-t_1)$. \label{it:loce}
            \item For any $s\in S$, $\Sigma_s\subseteq B_{\frac12}\cap\Sigma$. \label{it:locf}
        \end{enumerate}
    \end{enumerate}
\end{proposition}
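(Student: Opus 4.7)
The plan is to establish the dichotomy by splitting on whether $\xi/\abs{\xi}$ lies close to $\pm\Sigma$ or is uniformly transverse to both. In the first situation statement~\cref{it:linear_growth_ball} follows from a sharpened form of \cref{lem:linear_growth_ball}; in the second, the cross-section $S$ of \cref{it:good_slicing} is built via a compactness argument over admissible unit directions, in the spirit of the covering used in \cref{prop:magic_compact_set}.

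\emph{Case A.} Suppose $d(\xi/\abs{\xi}, \Sigma) < c_1$ or $d(-\xi/\abs{\xi}, \Sigma) < c_1$, where $c_1$ is from \cref{lem:linear_growth_ball}. Through \cref{lem:derivative_volume}, the map $\xi\mapsto w(B_1(\xi)\cap\Sigma)$ is actually $C^1$ on $\mathcal C$ near $0$ with gradient $-\bar\nu$ at $0$, so the $\ge$-bound used to prove \cref{lem:linear_growth_ball} upgrades to the first-order Taylor expansion
\[
w(B_1(\xi)\cap\Sigma) - w(B_1\cap\Sigma) = -\bar\nu\cdot\xi + \smallo(\abs{\xi}) \quad\text{for } \xi\in\mathcal C.
\]
The linear lower bound $\bar\nu\cdot\xi' \le -c\abs{\xi'}$ proved there for $\xi'\in\mathcal C\cap\overline\Sigma$ (and, by continuity, on a $c_1$-neighborhood of it) applied to whichever of $\pm\xi$ lies in the relevant region gives $\abs{\bar\nu\cdot\xi}\ge c\abs{\xi}$; shrinking $c$ so that $\abs\xi<c$ absorbs the $\smallo(\abs\xi)$ term then yields~\cref{it:linear_growth_ball}.

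\emph{Case B.} Suppose $d(\pm\xi/\abs\xi, \Sigma) \ge c_1$. The set of admissible directions
\[
\mathcal D \defeq \{\theta\in\partial B_1\cap\mathcal C : d(\pm\theta,\Sigma)\ge c_1\}
\]
is compact, and for each $\theta\in\mathcal D$ neither $\theta$ nor $-\theta$ lies in the recession cone $\overline\Sigma$, so every translate of $\R\theta$ meets $\Sigma$ in a bounded segment. Fix $\theta\in\mathcal D$, pick an interior point $p_\theta\in\Sigma\cap B_\rho$ with $\rho$ small enough to ensure~\cref{it:locf}, and let $T_\theta$ be the orthogonal projection onto $\langle\theta\rangle^\perp$ of a small open ball $B_{r_\theta}(p_\theta)\compactsubset\Sigma$. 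For each $s\in T_\theta$ the line $s+\R\theta$ meets $\Sigma$ in a bounded segment crossing $B_{r_\theta}(p_\theta)$, giving~\cref{it:loca}, \cref{it:locb} (since $w$ is uniformly bounded below on compact subsets of $\Sigma$), and~\cref{it:locc} (since $\Haus^{n-1}(T_\theta)>0$). Properties~\cref{it:locd} and~\cref{it:loce} come from convexity: the function $t\mapsto d(s+t\theta,\Sigma)$ is convex, vanishes on $[t_0(s), t_1(s)]$, and has strictly positive one-sided slopes at the endpoints; since $\Sigma$ is a cone, its supporting hyperplanes at the exit points pass through the origin, and the angular separation $d(\pm\theta,\Sigma)\ge c_1$ translates into a uniform lower bound on $v\cdot\theta$ for the unit outer normals $v$ at these exits.

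Finally, cover $\mathcal D$ by finitely many neighborhoods $U_{\theta_1},\dots,U_{\theta_m}$ on each of which the construction above is continuous in the direction: given $\xi\in B_c\cap\mathcal C$ in Case B, $\xi/\abs{\xi}$ lies in some $U_{\theta_i}$ and one takes $S$ to be the reprojection of $T_{\theta_i}$ onto $\langle\xi\rangle^\perp$. Setting $c$ to the minimum of all constants (from Case A and the finitely many $U_{\theta_i}$) yields the required uniform dichotomy. The main technical obstacle is the uniform lower bound on the slopes in~\cref{it:locd}--\cref{it:loce}: one must quantify how $d(\pm\theta,\Sigma)\ge c_1$ controls $v\cdot\theta$ at exit points, using both the cone structure of $\Sigma$ (all supporting hyperplanes pass through $0$) and compactness in $(s,\theta)$; the remainder of the argument is essentially bookkeeping.
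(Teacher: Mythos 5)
Your proposal follows essentially the same route as the paper's proof: you split on whether $\xi/\abs{\xi}$ is within $c_1$ of $\pm\Sigma$ (handled via \cref{lem:linear_growth_ball}) or uniformly transverse to both, and in the second case you take $S$ to be the projection onto $\langle\xi\rangle^\perp$ of a small ball compactly contained in $\Sigma$ near the origin. Two remarks on the differences. First, the covering of $\mathcal D$ by neighborhoods $U_{\theta_i}$ is not needed: the paper fixes once and for all a single $v\in\Sigma$ and a small $l>0$ and projects $B_{l^2}(lv)$, and the resulting constants in \cref{it:loca}--\cref{it:locf} depend only on $v$, $l$ and $c_1$, uniformly in $\theta$, so the compactness step over directions can simply be dropped. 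Second, the ``main technical obstacle'' you flag for \cref{it:locd}--\cref{it:loce} deserves a word of caution: the bound $d(\pm\theta,\Sigma)\ge c_1$ \emph{alone} does not give a uniform lower bound on $\abs{\nu\cdot\theta}$ at an arbitrary exit point (e.g.\ for the circular cone $\Sigma=\{x>\sqrt{y^2+z^2}\}$ with $\theta=(0,0,1)$, taking $s$ close to $\partial\Sigma$ makes the exit nearly tangential even though $d(\pm\theta,\Sigma)$ is of order one). To close the argument one must combine the angular bound with the fact that, for $s\in S$, the segment $\Sigma_s$ passes through $B_{l^2}(lv)$ and hence stays a definite distance from $\partial\Sigma$ somewhere along its length; together with convexity of $t\mapsto d(s+t\theta,\Sigma)$ and the bound $\abs{t_1-t_0}\lesssim 1$ from \cref{it:locf}, this yields the slope bound. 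The paper, for what it is worth, asserts that \cref{it:locd}--\cref{it:loce} hold ``independently of the choice of $S$,'' which is a slight overclaim for exactly this reason, though it is correct for the $S$ both you and the paper construct.
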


\begin{proof}
    Let $c_1$ be the constant present in the statement of \cref{lem:linear_growth_ball}.
    If $d(\xi, \Sigma) < c_1\abs{\xi}$ or $d(\xi, -\Sigma) < c_1\abs{\xi}$, then \cref{lem:linear_growth_ball} implies that \cref{it:linear_growth_ball} holds (choosing $c$ smaller then $c_1$).
    
    We are going to show that if $d(\xi, \Sigma)>c_1\abs{\xi}$ and $d(\xi, -\Sigma)>c_1\abs{\xi}$, then we can find a set $S$ satisfying the requirements of \cref{it:good_slicing}.
    Given that $\xi$ is \emph{far} from $\Sigma$ and $-\Sigma$, the properties \cref{it:locd,it:loce} are satisfied independently of the choice of $S$.
    
    Fix an element $v\in\Sigma$ and a small real number $l>0$.
    Let $S$ be the image of the projection onto $\langle\xi\rangle^\perp$ of $B_{l^2}(lv)$.
    Up to choosing $c$ and $l$ sufficiently small, the properties \cref{it:locb,it:locc} are satisfied.
    
    The property \cref{it:locf} follows from the observation that for any $s\in S$ it holds $\Sigma_s\cap B_{2l}\not=\emptyset$. Similarly, the property \cref{it:loca} follows from the observation that for any $s\in S$ there is a point $z\in\Sigma_s$ such that $d(z, \partial\Sigma) > \eps_0l$, where $\eps_0>0$ is a small constant that depends only on the chosen vector $v$.
\end{proof}

\section{Spherical boundary implies ball}\label{sec:spherical_boundary_implies_ball}

The goal of this section is to prove the following.

\begin{proposition}\label{prop:close_to_translated_ball}
    Consider $n,\,\alpha,\,\Sigma,\,w$ satisfying \cref{eq:assumptions} and let $E\subseteq\Sigma$ be a set of finite $w$-perimeter with $w(E\cap B_{\frac12})\ge \frac12w(B_{\frac12}\cap \Sigma)$.
    Then, we have
    \begin{equation*}
        w\big(E\triangle (B_1(x_0)\cap\Sigma)\big) \lesssim \int_{\bou E\cap\Sigma} \abs{\abs{x-x_0}-1}w(x)\de\Haus^{n-1}(x)\comma
    \end{equation*}
    for any sufficiently small $x_0\in\R^n$.
\end{proposition}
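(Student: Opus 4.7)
The plan is to produce a radial vector field $V$ centered at $x_0$ with $\div(wV)=w\,\mathrm{sign}(\abs{x-x_0}-1)$ (up to an error term linear in $x_0$) and vanishing normal component on $\partial B_1(x_0)$, and to apply the divergence theorem to the two pieces of the symmetric difference. Explicitly, let $f\colon\R_{>0}\to\R$ solve $Df+rf'=\mathrm{sign}(r-1)$ with $f(1)=0$, giving $f(r)=(1-r^{-D})\mathrm{sign}(r-1)/D$, and set $V(x)\defeq(x-x_0)\,f(\abs{x-x_0})$. Euler's identity $\nabla w\cdot(x-x_0)=\alpha w-\nabla w\cdot x_0$ yields $\div(wV)=w\,\mathrm{sign}(\abs{x-x_0}-1)-f(\abs{x-x_0})\,x_0\cdot\nabla w$; the error term is $O(\abs{x_0})$ on the relevant domain and, for $\abs{x_0}$ sufficiently small, can be absorbed into the final constant. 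To keep the exposition clean we explain the argument for $x_0=0$. One checks $\abs{V(x)}\le\abs{x}-1$ for $\abs{x}\ge 1$ and $\abs{V(x)}\le C_D\,\abs{\abs{x}-1}$ for $\abs{x}\ge\tfrac12$, while $\abs{V}$ blows up like $\abs{x}^{1-D}$ near $0$. By the approximation argument of \cref{sec:coupling} we may assume $w\equiv 0$ on $\partial\Sigma$, eliminating all $\partial\Sigma$ contributions. Set $I\defeq\int_{\bou E\cap\Sigma}\abs{\abs{x}-1}\,w\,\de\Haus^{n-1}$.

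The outer part $w(E\setminus B_1)$ is immediate: the truncation $V\chi_{\{\abs{x}\ge 1\}}$ is Lipschitz (since $f(1)=0$), its weighted divergence equals $w\chi_{\{\abs{x}>1\}}$, and the divergence theorem on $E\setminus B_1$ combined with $\abs{V}\le\abs{x}-1$ yields $w(E\setminus B_1)\le I$. For the inner part $V$ is no longer integrable on $\bou E$ near the origin, so we truncate at $\abs{x}=\tfrac12$. Set $\tilde V\defeq V\chi_{\{\abs{x}\ge 1/2\}}$: its distributional divergence is $\chi_{\{\abs{x}>1/2\}}w\,\mathrm{sign}(\abs{x}-1)+c_0\,w\,\restricts{\Haus}{\partial B_{1/2}}^{n-1}$ with $c_0=\tfrac12 f(\tfrac12)>0$. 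Applying the divergence theorem on $(B_1\cap\Sigma)\setminus E$ (only the $\bou E$ and $\partial B_{1/2}$ contributions survive, since $V\cdot\omega=0$ on $\partial B_1$ and $w=0$ on $\partial\Sigma$) yields
\[
w\bigl(((B_1\cap\Sigma)\setminus E)\cap\{\abs{x}>\tfrac12\}\bigr)\le c_0\,\Haus_w^{n-1}\bigl(\partial B_{1/2}\cap\Sigma\setminus E\bigr)+C\,I.
\]

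It remains to bound $w(F)$ and $\Haus_w^{n-1}(\partial B_{1/2}\cap\Sigma\setminus E)$, where $F\defeq(B_{1/2}\cap\Sigma)\setminus E$, using the hypothesis. Since $\abs{\abs{x}-1}\ge\tfrac12$ inside $B_{1/2}$, we have $\Haus_w^{n-1}(\bou E\cap B_{1/2})\le 2I$, which equals the relative weighted perimeter of $F$ in $B_{1/2}\cap\Sigma$. The hypothesis $w(F)\le\tfrac12 w(B_{1/2}\cap\Sigma)$ places $F$ on the smaller side of the weighted relative Sobolev inequality in $B_{1/2}\cap\Sigma$ (a consequence of the weighted Poincar\'e inequality of \cref{sec:FMP}), producing $w(F)\lesssim I^{D/(D-1)}\lesssim I$ whenever $I\le 1$; the weighted trace inequality on the Lipschitz domain $B_{1/2}\cap\Sigma$ then yields $\Haus_w^{n-1}(\partial B_{1/2}\cap\Sigma\setminus E)\lesssim w(F)+\Haus_w^{n-1}(\bou E\cap B_{1/2})\lesssim I$. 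Assembling the three estimates gives $w(E\triangle(B_1\cap\Sigma))\lesssim I$ for $I\le 1$; the case $I\ge 1$ is trivial since the left-hand side is bounded by $w(B_1\cap\Sigma)+I$. The main obstacle is precisely the singularity of $V$ at $x_0$: the vector-field estimate is useless inside $B_{1/2}(x_0)$, and it is exactly the hypothesis that, via the relative Sobolev and trace inequalities, rules out the ``deep pit'' scenario in which $E$ would avoid most of $B_{1/2}(x_0)$ and the symmetric difference could not be controlled by the boundary integral.
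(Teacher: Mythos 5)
Your argument replaces the paper's polar-slicing scheme (a $1$-dimensional boundary-to-interior lemma applied slice-by-slice in polar coordinates, \cref{lem:one_dim_boundary_to_inside} plus \cref{lem:easy_polar_formulas}) with a calibration-type vector field $V(x)=(x-x_0)f(\abs{x-x_0})$ and the divergence theorem; both proofs then invoke the relative isoperimetric/trace machinery of \cref{sec:FMP} to control the part of the symmetric difference hiding inside $B_{1/2}$. This is a genuinely different route, and for $x_0=0$ it is clean and correct: the ODE $Df+rf'=\mathrm{sign}(r-1)$, the bound $\abs{V(x)}\le C_D\abs{\abs{x}-1}$ for $\abs{x}\ge\frac12$, the truncation at $\abs{x}=\frac12$, and the positivity of $c_0=\frac12 f(\frac12)$ all check out, and the final use of \cref{cor:rel_isop_ball} and \cref{eq:trace_ineq} on $B_{1/2}\cap\Sigma$ is exactly what the paper does (note the trace inequality already gives $\Haus^{n-1}_w(\partial B_{1/2}\cap\Sigma\setminus E)\lesssim\Haus^{n-1}_w(\bou E\cap B_{1/2}\cap\Sigma)$, so the extra $w(F)$ on the right-hand side of your last chain is unnecessary).

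The gap is the treatment of the $x_0$-error. You compute $\div(wV)=w\,\mathrm{sign}(\abs{x-x_0}-1)-f(\abs{x-x_0})\,x_0\cdot\nabla w$ and claim the last term ``is $O(\abs{x_0})$ on the relevant domain and ... can be absorbed into the final constant.'' This is not a legitimate absorption: the target inequality is $w(E\triangle(B_1(x_0)\cap\Sigma))\lesssim I$ with a constant independent of $x_0$, and an \emph{additive} $O(\abs{x_0})$ error cannot be swallowed because $I$ can be arbitrarily small compared with $\abs{x_0}$ (take $E=B_1(x_0)\cap\Sigma$, where $I=0$). What you actually need is an error of the form $C\abs{x_0}\cdot(\text{left-hand side})$, so that it can be reabsorbed multiplicatively for $\abs{x_0}$ small. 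This does seem achievable --- e.g.\ integrate the error by parts, writing $\int_\Omega f\,x_0\cdot\nabla w=\int_{\partial\Omega}fw\,x_0\cdot\nu-\int_\Omega w\,f'\,\tfrac{(x-x_0)\cdot x_0}{\abs{x-x_0}}$, bound the boundary term by $\abs{x_0}\,I$ using $\abs{f}\lesssim\abs{\abs{x-x_0}-1}$ and $f(1)=0$, and bound the volume term by $C_D\abs{x_0}\,w(\Omega)$ --- but this is a real additional step, not a free remark; moreover one must verify $\nabla w\in L^1_{loc}$ (or avoid it via the integration by parts, which you should do) since $\nabla w$ is typically unbounded near $\partial\Sigma$ and near the vertex. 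In addition, you truncate at the \emph{origin}-centered sphere $\partial B_{1/2}$ (to match the hypothesis on $E\cap B_{1/2}$) while $V$ is centered at $x_0$, so the surface contribution on $\partial B_{1/2}$ is not the constant $c_0 w$ you state but $(x-x_0)\cdot\tfrac{x}{\abs{x}}\,f(\abs{x-x_0})\,w$, which requires its own (small, $O(\abs{x_0})$) correction. The paper's polar slicing is centered at the origin throughout and absorbs $x_0$ into the radial endpoint $t(\theta)$ of each slice, which is precisely what makes these $x_0$-errors disappear there; your approach introduces them and must confront them explicitly. Finally, the generic-radius issue (possibly $\Haus^{n-1}(\partial^*E\cap\partial B_{1/2})>0$, making the trace of $\chi_F$ on $\partial B_{1/2}$ ambiguous) deserves a word, though it is easily handled since such a flat piece contributes $\ge\frac12$ of its $w$-measure to $I$.
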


The strategy is to obtain first a robust analogous result in $1$-dimension (that is \cref{lem:one_dim_boundary_to_inside}) and deduce the full statement through a polar slicing.

\begin{lemma}\label{lem:one_dim_boundary_to_inside}
    For any $\gamma\ge 0$, there is a constant $C_\gamma>0$ such that the following statement holds.
    
    Let $E\subseteq\co0\infty$ be a $1$-dimensional set of locally finite perimeter with $\abs{E} < \infty$. For any $1-\frac14\le l\le1+\frac14$ it holds
    \begin{equation}\label{eq:no_fantasy}
        \int_{E\triangle\cc0l} t^{\gamma}\de t 
        \le C_\gamma \left(
        \int_{\cc0{\frac12}\setminus E} t^{\gamma}\de t
        +\int_{\bou E} t^{\gamma}\abs{l-t}\de\Haus^0(t) \right)\fullstop
    \end{equation}
\end{lemma}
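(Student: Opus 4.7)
I would reduce to the case where $E$ is open (which can be done up to a Lebesgue null set, since $E$ has locally finite perimeter in dimension one) and write $[0,\infty)\setminus E$ as a countable disjoint union of maximal open intervals. The natural decomposition
\[
E\triangle[0,l]=\bigl(E\cap(l,\infty)\bigr)\sqcup\bigl([0,l]\setminus E\bigr)
\]
suggests treating these two pieces separately. In both pieces I would further decompose into the maximal open subintervals; endpoints of such subintervals are either boundary points of $E$ (and so contribute to the $\mathcal H^0$-integral on the right) or coincide with one of the ``distinguished'' points $0$, $1/2$, $l$. The assumption $l\in[3/4,5/4]$ makes all the relevant constants universal, depending only on $\gamma$.

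\textbf{Bounding $E\cap(l,\infty)$.} Each connected component is of the form $(a,b)$ with $l\le a<b<\infty$ and $b\in\partial^* E$ (since $|E|<\infty$ forces $b$ finite and it must be a jump point). Because $b-a\le b-l=|l-b|$, one has
\[
\int_a^b t^\gamma\,dt\le b^\gamma(b-a)\le b^\gamma|l-b|,
\]
and summing over components gives at once $\int_{E\cap(l,\infty)}t^\gamma\,dt\le\int_{\partial^* E\cap(l,\infty)}t^\gamma|l-t|\,d\mathcal H^0$, i.e.\ this part of the left-hand side is absorbed into the boundary integral.

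\textbf{Bounding $[0,l]\setminus E$.} I would classify the maximal open intervals $(a,b)$ of $[0,l]\setminus E$ according to their position relative to $1/2$:
\begin{itemize}
\item \emph{Type A:} $b\le 1/2$. Then $(a,b)\subseteq[0,1/2]\setminus E$, so these contribute directly to the first term on the right-hand side.
\item \emph{Type B:} $a\ge 1/2$. Then $a\in\partial^* E$ (since $a\ge 1/2>0$ and $(a,b)$ is maximal), and one estimates
\[
\int_a^b t^\gamma\,dt\le l^\gamma(b-a)\le l^\gamma(l-a)\le(l/a)^\gamma\,a^\gamma|l-a|\le(5/2)^\gamma\,a^\gamma|l-a|,
\]
so summing over type B intervals gives at most $(5/2)^\gamma$ times the boundary integral restricted to $(1/2,l)$.
\item \emph{Type C (straddling, at most one):} $a<1/2<b$. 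Split the contribution as $\int_a^{1/2}+\int_{1/2}^b$. The first piece lies inside $[0,1/2]\setminus E$ and thus is part of the first right-hand side term. The second piece is bounded by the absolute constant $M:=l^\gamma(l-1/2)\le(5/4)^\gamma\cdot 3/4$, and to absorb $M$ into the right-hand side I would split into subcases: if $a\le 1/4$, then $\int_a^{1/2}t^\gamma\,dt\ge\int_{1/4}^{1/2}t^\gamma\,dt=:c_\gamma>0$, so $M\le(M/c_\gamma)\cdot(\text{first term})$; if $1/4<a<1/2$, then $a\in\partial^* E$ and its boundary contribution satisfies $a^\gamma|l-a|\ge(1/4)^\gamma\cdot 1/4$, again giving $M\le C_\gamma\cdot(\text{boundary term at }a)$.
\end{itemize}

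\textbf{Main obstacle and conclusion.} Summing the three bounds yields the desired inequality with a constant $C_\gamma$ depending only on $\gamma$. The only genuinely delicate point is the straddling interval: its contribution cannot be bounded by a single one of the two right-hand side quantities, and one must argue by cases on the location of its left endpoint $a$, using either the first term (when $a$ is small enough that $[0,1/2]\setminus E$ is automatically large) or the boundary term at $a$ (when $a$ is close to $1/2$, so $a^\gamma$ is bounded below). Handling this bookkeeping carefully, while making sure the constants depend only on $\gamma$, is what I expect to be the main, though still elementary, difficulty.
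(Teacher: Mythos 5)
Your proof is correct, and it takes a slightly different route from the paper's. Where you decompose both $E\cap(l,\infty)$ and $[0,l]\setminus E$ into their maximal open intervals and sum the contribution of each component, the paper instead argues via extremal boundary points: it bounds $\int_{E\cap(1,\infty)}t^\gamma\,dt$ by $\int_1^{t_1}t^\gamma\,dt$ where $t_1=\sup(\bou E\cap[1,\infty))$, and for the second piece it runs a short case analysis on whether $\bou E\cap[1/4,3/4]$ is empty, ultimately locating a single boundary point $t_0=\inf(\bou E\cap[1/4,1])$ whose contribution dominates $\int_{[1/2,1]\setminus E}t^\gamma\,dt$. Both strategies are elementary; yours is more systematic (every component is charged to a boundary endpoint or to the first term) while the paper's is leaner (it charges everything to at most two distinguished boundary points and to the coarse a priori bound $\int_{[1/2,1]\setminus E}t^\gamma\,dt\le 1$). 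Your ``Type C'' subcase analysis for the single straddling interval mirrors, in spirit, the paper's dichotomy on whether $\bou E\cap[1/4,3/4]$ is empty. One small cosmetic point: in the Type C case with $a\le 1/4$ you implicitly use that $(a,1/2)\subseteq[0,1/2]\setminus E$ so that $c_\gamma\le\int_{[0,1/2]\setminus E}t^\gamma\,dt$; worth stating explicitly, but it is indeed true since $(a,b)$ is a component of $[0,l]\setminus E$. Also note that, exactly as in the paper, one should observe at the outset that if the right-hand side is infinite there is nothing to prove, so all the sums over components are finite and the rearrangements are legitimate.
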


\begin{proof}
    For simplicity, we prove it only for $l=1$; the proof works (up to slightly increasing the value of the constant $C_\gamma$) also for any other $1-\frac14\le l\le 1+\frac14$.
    It holds $E\triangle \cc01=\big(\oo1\infty\cap E\big)\cup\big(\cc01\setminus E\big)$.
    
    We will estimate independently the two terms
    \begin{equation*}
        \int_{\co1\infty\cap E} t^\gamma\de t
        \quad\text{and}\quad
        \int_{\cc{\frac12}{1}\setminus E} t^{\gamma} \de t \fullstop
    \end{equation*}
    Notice that if $\co1\infty\cap E=\emptyset$, then the first term is trivially estimated.
    Otherwise $\bou E\cap \co1\infty$ is nonempty and must have a supremum point, that we denote by $t_1$ (we may assume $t_1<\infty$, so that the right-hand side in \cref{eq:no_fantasy} is finite). It holds
    \begin{equation*}
        \int_{\co1\infty\cap E} t^\gamma\de t
        \le 
        \int_1^{t_1} t^\gamma\de t
        \le t_1^{\gamma}\abs{t_1-1}
        \le \int_{\bou E} t^{\gamma}\abs{1-t}\de\Haus^0(t) \comma
    \end{equation*}
    hence we have successfully controlled the first term.
    
    Let us now move our attention to the second term. First notice that its value is a priori bounded by~$1$. If $\bou E\cap\cc{\frac14}{\frac34}\not=\emptyset$, then the right-hand side of \cref{eq:no_fantasy} is at least $C_\gamma \left(\frac{1}{4}\right)^{\gamma+1}$ and therefore we have the desired estimate (choosing $C_\gamma$ appropriately). 
    
    Thus we can assume $\bou E\cap\cc{\frac14}{\frac34}=\emptyset$.
    If $\cc{\frac14}{\frac34}\setminus E\not = \emptyset$, then (since $E$ has no boundary in that interval) it follows $\cc{\frac14}{\frac34}\cap E = \emptyset$ and in particular the right-hand side of \cref{eq:no_fantasy} is larger than 
    \begin{equation*}
        C_\gamma\int_{\frac14}^{\frac34}t^\gamma\de t\comma
    \end{equation*}
which yields the desired estimate.
    
    Thus we can assume $\cc{\frac14}{\frac34}\subseteq E$. If $\bou E\cap\cc{\frac14}{1}=\emptyset$, then $\cc{\frac12}{1}\subseteq E$ and therefore there is nothing to prove. Otherwise, let $t_0$ be the infimum of $\bou E\cap\cc{\frac14}{1}$.
    It holds
    \begin{equation*}
        \int_{\cc{\frac12}{1}\setminus E} t^\gamma\de t
        \le
        \int_{t_0}^1 t^{\gamma}\de t
        \le 4^\gamma t_0^{\gamma}\abs{1-t_0}
        \le 4^\gamma \int_{\bou E} t^{\gamma}\abs{1-t}\de\Haus^0(t)\comma
    \end{equation*}
    and this concludes the proof.
\end{proof}

To perform the polar slicing we will need the following technical lemma.

\begin{lemma}\label{lem:easy_polar_formulas}
    Let $\Sigma$ be an open cone and let $E\subseteq\Sigma$ be a measurable set such that $\Per(E,\Omega)<\infty$ for any $\Omega\compactsubset\Sigma$.
    For any $\theta\in \S^{n-1}\cap\Sigma$, let us define
    \begin{equation*}
        E_{\theta}\defeq\{t\ge 0:\ t\theta\in E\} \fullstop
    \end{equation*}
    Then, for any $\eta\in L^1(E)$, we have
    \begin{equation*}
        \int_E \eta 
        = \int_{\S^{n-1}\cap\Sigma}\de\Haus^{n-1}(\theta)
        \int_{E_{\theta}} t^{n-1}\eta(t\theta)\de t \fullstop
    \end{equation*}

    Moreover, for $\Haus^{n-1}$-almost every $\theta\in\S^{n-1}\cap\Sigma$, $E_{\theta}\subseteq\R$ is a $1$-dimensional set of locally finite perimeter such that the Vol'pert property $\bou E_{\theta}\cap\{t>0\} = \{t>0:\ t\theta\in\bou E\}$ holds. 
    Furthermore, for any nonnegative function $\eta\in L^1(\bou E,\Haus^{n-1})$, we have
    \begin{equation*}
        \int_{\bou E\cap\Sigma} \eta 
        \ge \int_{\S^{n-1}\cap\Sigma}\de\Haus^{n-1}(\theta)
        \int_{\bou E_{\theta}} t^{n-1}\eta(t\theta)\de\Haus^0(t) \fullstop
    \end{equation*}
\end{lemma}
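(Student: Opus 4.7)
The plan is to derive all three claims from the coarea formula applied to the radial projection $h\colon\R^n\setminus\{0\}\to\S^{n-1}$, $h(x)\defeq x/\abs x$, whose level sets are precisely the open rays along which we are slicing. A direct computation using $dh(x)=\abs x^{-1}(\id-\theta\otimes\theta)$ with $\theta=h(x)$ shows that the classical Jacobian of $h$ equals $\abs x^{-(n-1)}$; applying the coarea formula to the integrand $\eta(x)\chi_E(x)\abs x^{n-1}$ then yields the polar integration formula at once.

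For the Vol'pert-type property I would transfer the question through the smooth diffeomorphism $\Phi\colon(0,\infty)\times\S^{n-1}\to\R^n\setminus\{0\}$ given by $\Phi(t,\theta)\defeq t\theta$. The pullback $F\defeq\Phi^{-1}(E)$ is a set of locally finite perimeter in the product, its horizontal slices agree with $E_\theta$, and the standard Vol'pert slicing theorem for $BV$ functions on product spaces applies. It ensures that for $\Haus^{n-1}$-a.e.\ $\theta\in\S^{n-1}$ the slice $E_\theta$ has locally finite $1$-dimensional perimeter and its reduced boundary is exactly the natural slice of $\bou F$; pushing this statement back through $\Phi$ gives the claimed identification $\bou E_\theta\cap\{t>0\}=\{t>0:t\theta\in\bou E\}$.

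The boundary inequality is the technical heart of the lemma. The plan is to apply the coarea formula on the rectifiable set $\bou E$ to the restriction of $h$ to $\bou E$. The key step is the computation of the tangential Jacobian $J^{\bou E}h$: by picking an orthonormal basis of the tangent hyperplane $T_x\bou E$ adapted to the radial direction $\theta$ and the unit normal $\nu\defeq\nu_{\bou E}(x)$, and using that $dh$ vanishes on the radial direction while acting as $\abs x^{-1}\id$ on $\theta^\perp$, one verifies
\[
    J^{\bou E}h(x)=\frac{\abs{\nu\cdot\theta}}{\abs x^{n-1}}\fullstop
\]
Inserting the integrand $\eta(x)\abs x^{n-1}$ into the coarea formula on $\bou E$ yields the identity
\[
    \int_{\bou E\cap\Sigma}\eta(x)\abs{\nu_{\bou E}\cdot\theta}\de\Haus^{n-1}(x)=\int_{\S^{n-1}\cap\Sigma}\de\Haus^{n-1}(\theta)\int_{(\bou E)_\theta}\eta(t\theta)t^{n-1}\de\Haus^0(t)\comma
\]
where $(\bou E)_\theta\defeq h^{-1}(\theta)\cap\bou E$. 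Since $\eta\ge 0$ and $\abs{\nu\cdot\theta}\le 1$, the left-hand side is bounded above by $\int_{\bou E\cap\Sigma}\eta\de\Haus^{n-1}$, and combining with the Vol'pert identification $(\bou E)_\theta=\bou E_\theta$ for $\Haus^{n-1}$-a.e.\ $\theta$ produced above converts the right-hand side into the integral appearing in the statement. The main obstacle in executing this plan is the tangential Jacobian computation together with ensuring that the Vol'pert identification transfers faithfully through the polar diffeomorphism $\Phi$; once both are in place, the conclusion is immediate.
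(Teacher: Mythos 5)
Your proposal is correct and follows essentially the same route as the paper's sketch. The paper's proof cites the coarea formula (Maggi, Thm.\ 13.1) for the polar volume identity, Vol'pert's one-dimensional slicing theorem (Ambrosio--Fusco--Pallara, Thm.\ 3.107) for the slice-boundary identification, and the area formula for rectifiable sets (Maggi, Thm.\ 11.6) for the boundary inequality; you use exactly these three ingredients, only making the two Jacobian computations explicit. One terminological note: in your third step, since $h$ maps the $(n-1)$-rectifiable set $\bou E$ to the $(n-1)$-dimensional sphere, what you invoke is really the \emph{area} formula (with $\Haus^0$-multiplicity on the fibers) rather than the coarea formula — the paper's citation of Maggi 11.6 reflects this — but in equal dimension the two coincide, and your tangential Jacobian $J^{\bou E}h=\abs{\nu\cdot\theta}/\abs{x}^{n-1}$ is correct. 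The only point deserving a bit more care in a full write-up is transferring the Vol'pert theorem, which in AFP is stated for splittings $\R\times\R^{n-1}$, through the diffeomorphism $\Phi(t,\theta)=t\theta$; this can be done by covering $\S^{n-1}$ by charts, and you correctly flag it as a technicality rather than a gap.
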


\begin{proof}
    The proof is standard and technical, we give only a sketch.
    The first part of the statement follows from the coarea formula (\cite[Theorem 13.1]{maggi2012}), whereas the second part can be shown applying \cite[Theorem 3.107]{ambrosio-fusco-pallara} and the area formula for rectifiable sets (\cite[Theorem 11.6]{maggi2012}).
\end{proof}

Using the previous two results, we can now give the:

\begin{proof}[Proof of \cref{prop:close_to_translated_ball}]
    The strategy of the proof is to perform a polar slicing of the set $E$ and apply \cref{lem:one_dim_boundary_to_inside} on each slice.
    
    Using \cref{lem:easy_polar_formulas}, we obtain
    \begin{align*}
        \int_{\bou E\cap\Sigma} \abs{\abs{x-x_0}-1}w(x)\de\Haus^{n-1}(x)
        \ge \int_{\S^{n-1}\cap\Sigma}\, w(\theta)\de\Haus^{n-1}(\theta)
        \int_{\bou E_{\theta}} t^{D-1}\abs{\abs{t\theta-x_0}-1}\de\Haus^0(t) \fullstop
    \end{align*}
    Keeping in mind that $x_0$ is a small vector, we have that the set $\{ t>0:\ \abs{t\theta-x_0}<1\}$ is an open segment $\oo0{t(\theta)}$, with $t(\theta)$ close to $1$, for every $\theta\in\S^{n-1}$. In addition, for any $\theta\in\S^{n-1}$ and $t>0$, we have $\abs{\abs{t\theta-x_0}-1}\gtrsim \abs{t-t(\theta)}$, where the hidden constant is purely geometric. 
    Hence we obtain
    \begin{equation}\label{eq:tmp098}
    \begin{split}
        \int_{\S^{n-1}\cap\Sigma}w(\theta)\de\Haus^{n-1}(\theta)
        \int_{\bou E_{\theta}} t^{D-1}\abs{t-t(\theta)}\de\Haus^0(t)
        \lesssim
        \int_{\bou E\cap\Sigma} \abs{\abs{x-x_0}-1}w(x)\de\Haus^{n-1}(x) \fullstop
    \end{split}
    \end{equation}
    
    Applying \cref{lem:easy_polar_formulas} and the relative isoperimetric inequality, \cref{cor:rel_isop_ball}, to the set $F\defeq\big(B_{\frac12}\cap\Sigma\big)\setminus E$, we deduce
    \begin{equation}
    \begin{split}\label{eq:tmp2324}
        \int_{\S^{n-1}\cap\Sigma} w(\theta)\de\Haus^{n-1}(\theta) & \int_{\cc0{\frac12}\setminus E_{\theta}} t^{D-1}\de t
        = 
        w\big(\big(B_\frac12\cap\Sigma\big)\setminus E\big) \\
        &\lesssim
        \int_{\bou E\cap B_{\frac12}\cap\Sigma} w\de\Haus^{n-1}
        \lesssim \int_{\bou E\cap\Sigma}\abs{\abs{x-x_0}-1}w(x)\de\Haus^{n-1}(x) \fullstop
    \end{split}
    \end{equation}
Here we used that $\abs{\abs{x-x_0}-1}\gtrsim 1$ in $B_{\frac12}$.
    
    Using \cref{lem:easy_polar_formulas}, the estimates \cref{eq:tmp098,eq:tmp2324}, and \cref{lem:one_dim_boundary_to_inside}, we conclude
    \begin{equation*}
        w\big(E\triangle (B_1(x_0)\cap\Sigma)\big)
        =
        \int_{\S^{n-1}\cap\Sigma}w(\theta)\de\Haus^{n-1}(\theta)
        \int_{E_{\theta}\triangle\cc{0}{t(\theta)}} t^{D-1}\de t
        \lesssim
        \int_{\bou E\cap\Sigma}\abs{\abs{x-x_0}-1}w(x)\de\Haus^{n-1}(x) \fullstop
    \end{equation*}
\end{proof}

\section{Weighted trace and Sobolev-Poincar\'e inequalities}\label{sec:FMP}

In this section we establish a trace inequality and a Sobolev--Poincar\'e inequality in the weighted setting and within a cone, as well as a uniform bound on the Cheeger constant for sets that have small weighted isoperimetric deficit.
Essentially, we repeat the arguments of \cite[Section 3]{FMP}, taking care of the presence of the weight $w$ and of the cone $\Sigma$. The only additional result contained here is the Sobolev-Poincar\'e inequality \cref{eq:poincare_ineq}. Both the statements and the proofs in this section are simple adaptations of the analogous ones in \cite{FMP}.
Unexpectedly, the constants in the statements of \cref{lem:removal} and \cref{thm:nice_subset_with_poincare} depend only on $D=n+\alpha$ and not on $\Sigma$ or $w$.

\begin{remark}
In this section, we never use directly the homogeneity or the concavity of the weight; all the results remain true for any weight such that the weighted isoperimetric inequality holds.
\end{remark}

Given a set of finite $w$-perimeter $E\subseteq\Sigma$ with $0<w(E)<\infty$, we define the Cheeger constant as
\begin{equation}\label{eq:cheeger_constant}
    \tau(E)\defeq \inf\left\{\frac{\Per_w(F)}{\Haus^{n-1}_w(\bou F\cap \bou E)}:\;F\subseteq E,\;0<w(F)\leq\frac{w(E)}{2}\right\}\,.
\end{equation}
It follows from the definition of $\tau(E)$ that, as long as $\tau(E)>1$, we have the following relative isoperimetric inequality
\begin{equation}\label{eq:rel_isop_general}
c_*w(F)^{\frac{D-1}D}\le \Per_w(F) \le \frac{\tau(E)}{\tau(E)-1}\,\Haus^{n-1}_w(\bou F\cap E)
\end{equation}
for any $F\subseteq E$ with $w(F)\leq \frac12 w(E)$, where $c_*\defeq\Per_w(B_1\cap \Sigma)/w(B_1\cap \Sigma)^{\frac{D-1}{D}}=Dw(B_1\cap\Sigma)^{\frac1D}$ is the isoperimetric constant that comes from \cref{isop-ineq}, and  $D\defeq n+\alpha$.

\begin{remark}
Notice that when $\tau(E)-1$ is very small, \cref{eq:rel_isop_general} is not very useful (as the constant in the right-hand side explodes). We will see that also the trace inequality \cref{eq:trace_ineq} and the Sobolev-Poincar\'e inequality \cref{eq:poincare_ineq} exhibit a similar behavior. Since we want to apply these inequalities on sets with small weighted isoperimetric deficit, it is crucial to show that, if $\delta_w(E)$ is sufficiently small, then (up to slightly modifying the set $E$) the value $\tau(E)-1$ is bounded away from $0$ by a constant that does not depend on $E$ (see \cite[Section 1.6]{FMP} for an explanation of why it is necessary to modify the set $E$). This is exactly the statement of \cref{thm:nice_subset_with_poincare}.
\end{remark}

The following trace inequality is the analogue of \cite[Lemma 3.1]{FMP}. 
In addition to the trace inequality, we prove also a Sobolev-Poincar\'e inequality.
See \cref{sec:notation} for the definition of $E^{(1)}$.

\begin{lemma}\label{lem:trace_poincare}
Consider $n,\,\alpha,\,\Sigma,\,w$ satisfying \cref{eq:assumptions}.
For every function $f\in BV_w(\R^n)\cap L^\infty(\R^n)$ and for every set of finite $w$-perimeter $E\subseteq\Sigma$ with $w(E)<\infty$, there is a constant\footnote{The constant $c$, as it is clear from the proof, can be chosen to be the \emph{median} value of $f$.} $c\in\R$ such that
\begin{equation}\label{eq:trace_ineq}
\int_{E^{(1)}} w(x)\de \abs{\diff f}(x) \geq (\tau(E)-1)\int_{\bou E\cap\Sigma} \tr_E(|f-c|)w(x)\de\Haus^{n-1}(x)\comma
\end{equation}
and also
\begin{equation}\label{eq:poincare_ineq}
    \int_{E^{(1)}} w(x)\de \abs{\diff f}(x) 
    \ge D \big(1-\tau(E)^{-1}\big) 
    \left(\int_E \abs{f-c}^{\frac{D}{D-1}}w(x)\de x\right)^{\frac{D-1}{D}} \fullstop
\end{equation}
\end{lemma}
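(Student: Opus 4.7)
The plan is to mimic the strategy of \cite[Lemma 3.1]{FMP}: slice the function $f$ by its level sets, apply the Cheeger bound \cref{eq:cheeger_constant} separately on super- and sub-level sets, and assemble via the coarea formula. For the Sobolev--Poincar\'e part, we additionally use the weighted isoperimetric inequality \cref{isop-ineq} and a Federer--Fleming argument.

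Let $c\in\R$ be a $w$-median of $f$ on $E$, so that $w(\{f>c\}\cap E)\le w(E)/2$ and $w(\{f<c\}\cap E)\le w(E)/2$. For $s>0$ set $A_s\defeq \{f>c+s\}\cap E$ and $B_s\defeq \{f<c-s\}\cap E$; both are subsets of $E$ with $w$-measure at most $w(E)/2$, hence eligible in the infimum defining $\tau(E)$. Decomposing (up to $\Haus^{n-1}$-negligible sets, using $A_s\subseteq E$)
\[
\Per_w(A_s) = \Haus^{n-1}_w\!\big(\bou A_s\cap E^{(1)}\big) + \Haus^{n-1}_w\!\big(\bou A_s\cap \bou E\big),
\]
the Cheeger bound $\tau(E)\,\Haus^{n-1}_w(\bou A_s\cap\bou E)\le \Per_w(A_s)$ rearranges to
\[
(\tau(E)-1)\,\Haus^{n-1}_w\!\big(\bou A_s\cap\bou E\big) \le \Haus^{n-1}_w\!\big(\bou A_s\cap E^{(1)}\big),
\]
and similarly for $B_s$.

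Integrating these two inequalities for $s\in(0,\infty)$ and summing, the coarea formula for BV functions identifies the right-hand side with $\int_{E^{(1)}} w\,d|\diff f|$ (since the level sets of $|f-c|$ on $E$ split as $A_s\cup B_s$ disjointly). The corresponding trace coarea formula on $\bou E\cap\Sigma$ identifies the left-hand side with $\int_{\bou E\cap\Sigma}\tr_E(|f-c|)\,w\,d\Haus^{n-1}$, yielding \cref{eq:trace_ineq}.

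For \cref{eq:poincare_ineq}, combine the preceding pointwise-in-$s$ estimate with the isoperimetric inequality \cref{isop-ineq}: since $\Haus^{n-1}_w(\bou A_s\cap E^{(1)})\ge (1-\tau(E)^{-1})\Per_w(A_s)\ge c_*(1-\tau(E)^{-1})\,w(A_s)^{(D-1)/D}$, and analogously for $B_s$, we obtain
\[
\int_{E^{(1)}} w\,d|\diff f| \ge c_*\bigl(1-\tau(E)^{-1}\bigr)\int_0^\infty\Bigl(w(A_s)^{\frac{D-1}{D}} + w(B_s)^{\frac{D-1}{D}}\Bigr)\,ds.
\]
Now apply Minkowski's inequality to the layer-cake identity $|f-c|(x)=\int_0^\infty \characteristic{|f-c|>s}(x)\,ds$ in $L^{D/(D-1)}(E,w)$, giving
\[
\Bigl(\int_E |f-c|^{\frac{D}{D-1}} w\Bigr)^{\frac{D-1}{D}} \le \int_0^\infty w\!\bigl(\{|f-c|>s\}\cap E\bigr)^{\frac{D-1}{D}}\,ds,
\]
and use the concavity bound $(a+b)^{(D-1)/D}\le a^{(D-1)/D}+b^{(D-1)/D}$ applied to $a=w(A_s)$, $b=w(B_s)$ to combine the two integrands into the previous display. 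Recalling the normalization $w(B_1\cap\Sigma)=1$ adopted in \cref{sec:notation}, we have $c_*=D$, which delivers the stated constant.

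The main subtlety is book-keeping: justifying the boundary decomposition $\Per_w(A_s)=\Haus^{n-1}_w(\bou A_s\cap E^{(1)})+\Haus^{n-1}_w(\bou A_s\cap\bou E)$ for a.e.\ $s$, and identifying $\int_0^\infty\Haus^{n-1}_w(\bou A_s\cap\bou E)\,ds$ with the trace integral of $|f-c|$ on $\bou E\cap\Sigma$. Both are standard consequences of the coarea formula and trace theory for BV functions (as in \cite{ambrosio-fusco-pallara,maggi2012}); aside from inserting the weight $w$ into every measure, the proof is a routine transcription of \cite{FMP}.
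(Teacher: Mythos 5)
Your argument is correct and follows essentially the same route as the paper: median $c$, Cheeger bound \cref{eq:cheeger_constant} applied slice-by-slice, coarea formula on $E^{(1)}$ for the gradient term, a boundary inclusion for the trace term, and then the weighted isoperimetric inequality plus a Federer--Fleming argument for \cref{eq:poincare_ineq}. The small differences in bookkeeping are in your favor. By slicing $f$ directly with $A_s=\{f>c+s\}\cap E$ and $B_s=\{f<c-s\}\cap E$ (rather than passing through $g=(f-c)_+$ and $(f-c)_-$), the coarea formula $\int_{E^{(1)}}w\,d|\diff f|=\int_0^\infty\bigl[\Haus^{n-1}_w(\bou A_s\cap E^{(1)})+\Haus^{n-1}_w(\bou B_s\cap E^{(1)})\bigr]\,ds$ is exact, and you sidestep the approximation argument the paper needs to show $\int w\,d|\diff f|\ge\int w\,d|\diff(f-c)_+|+\int w\,d|\diff(f-c)_-|$. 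You also spell out the layer-cake/Minkowski step that the paper delegates to \cite[Theorem~A.25]{ambrosio-carlotto-massacesi2018}. One terminological slip: the trace step is \emph{not} an identification. What one has (as in \cite[Lemma~3.1]{FMP}) is the inclusion $\bou E\cap\{\tr_E(|f-c|)>s\}\subseteq\bou E\cap\bou(A_s\cup B_s)$ up to $\Haus^{n-1}$-null sets, which via Fubini yields $\int_{\bou E\cap\Sigma}\tr_E(|f-c|)\,w\,d\Haus^{n-1}\le\int_0^\infty\bigl[\Haus^{n-1}_w(\bou A_s\cap\bou E)+\Haus^{n-1}_w(\bou B_s\cap\bou E)\bigr]\,ds$ — an inequality in the right direction, but not an equality, so you should write ``bounds from above'' rather than ``identifies.'' With that fix the proof is complete.
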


\begin{proof}
For every $t\in \R$ we consider the set $F_t\defeq E\cap \{f>t\}$. There exists $c\in \R$ such that $w(F_t)\leq w(E)/2$ for every $t\geq c$, and $w(E\setminus F_t)\leq w(E)/2$ for every $t<c$. We set moreover $g\defeq (f-c)_+$, where $(A)_+$ denotes the positive part of $A$, and $G_s\defeq E\cap \{g>s\}.$

Using the coarea formula \cite[Theorem 13.1]{maggi2012}, we have that
\begin{equation}\label{ineq-0}
\int_{E^{(1)}} w(x)\de \abs{\diff g}(x)=\int_0^{\infty}ds\int_{E^{(1)}\cap\bou\{g>s\}}w(x)\de\Haus^{n-1}(x) \fullstop
\end{equation}
Moreover, by the definition of $\tau(E)$, we have
\begin{equation}\label{ineq-1}
\begin{split}
\int_{E^{(1)}\cap \bou\{g>s\}}w(x)\de\Haus^{n-1}(x)&=\int_{E^{(1)}\cap \bou G_s} w(x)\de\Haus^{n-1}(x)\\
&=\int_{\bou G_s\cap\Sigma} w(x)\de\Haus^{n-1}(x)-\int_{\bou E\cap \bou G_s\cap\Sigma} w(x)\de\Haus^{n-1}(x)\\
&\geq (\tau(E)-1)\int_{\bou E\cap \bou G_s\cap\Sigma} w(x)\de\Haus^{n-1}(x) \fullstop
\end{split}
\end{equation}
Now, by Fubini, we have
$$\int_{\bou E\cap\Sigma}\tr_E(g)w(x)\de\Haus^{n-1}(x)=\int_0^{\infty}\de s\int_{\bou E\cap \{\tr_E(g)>s\}\cap\Sigma}w(x)\de\Haus^{n-1}(x).$$
Using that, up to $\Haus^{n-1}$-null sets, $\bou E\cap\{\tr_E(g)>s\}\subseteq \bou E\cap \bou G_s$ (the proof of this fact is contained in the proof of \cite[Lemma 3.1]{FMP}), we deduce that
\begin{equation}\label{ineq-2}
\int_{\bou E\cap\Sigma}\tr_E(g)w(x)\de\Haus^{n-1}(x)\le \int_0^{\infty}\de s\int_{\bou E\cap \bou G_s\cap\Sigma}w(x)\de\Haus^{n-1}(x) \fullstop
\end{equation}
Combining together \cref{ineq-1} and \cref{ineq-2} into \cref{ineq-0}, we get
\begin{equation*}
\begin{split}
\int_{E^{(1)}} w(x)\de\abs{\diff g}(x) 
&\geq (\tau(E)-1)\int_0^{\infty}\de s\int_{\bou E\cap \bou G_s\cap\Sigma} w(x) \de\Haus^{n-1}(x)\\
&\geq (\tau(E)-1)\int_{\bou E\cap\Sigma}\tr_E(g)\,w(x)\de \Haus^{n-1}(x) \fullstop
\end{split}
\end{equation*}
Now, we repeat the above argument with $(f-c)_-$ in place of $(f-c)_+$ and, using the linearity of the trace operator and the fact that $(f-c)_+ + (f-c)_-=|f-c|$, we deduce that
\begin{align*}
&\int_{E^{(1)}} w(x) \de \abs{\diff((f-c)_+)}(x) + \int_{E^{(1)}} w(x) \de \abs{\diff((f-c)_-)}(x)\\
&\quad\quad\quad\quad
\ge (\tau(E)-1)\int_{\bou E\cap\Sigma}\tr_E(|f-c|)w(x)\de\Haus^{n-1}(x) \fullstop
\end{align*}
To conclude the proof of \cref{eq:trace_ineq} it is enough to show that, for any open set $\Omega$ in $\R^n$
\begin{equation*}
\int_{\Omega} w(x) \de \abs{\diff f}(x)
\ge
\int_{\Omega} w(x) \de \abs{\diff((f-c)_+)}(x) + \int_{\Omega} w(x) \de \abs{\diff((f-c)_-)}(x) \fullstop
\end{equation*}
This fact can be seen exactly as in \cite{FMP}, by approximating $f$ with smooth functions in the weighted BV-norm, and using the lower semicontinuity of the weighted total variation.

Let us now move our attention to the Sobolev-Poincar\'e inequality. Repeating the argument of \cref{ineq-0,ineq-1} and applying the weighted isoperimetric inequality, we can show
\begin{align*}
    \int_{E^{(1)}} w(x)\de\abs{\diff g}(x) 
    \ge (1-\tau(E)^{-1}) \int_0^{\infty}\de s\int_{\bou G_s\cap\Sigma} w(x)\de\Haus^{n-1}(x) \ge D(1-\tau(E)^{-1}) \int_0^{\infty}w(G_s)^{\frac{D-1}{D}} \de s 
    \fullstop
\end{align*}
The estimate \cref{eq:poincare_ineq} follows from the last one as described in \cite[Theorem A.25]{ambrosio-carlotto-massacesi2018}.
\end{proof}

We define now the family of sets (that depends on a set $E$ that will always be clear from the context)
\begin{equation*}
    \Gamma_{\lambda} \defeq \left\{F\subseteq E:\:0<w(F)\leq \frac{w(E)}{2},\;\Per_w(F)\leq \lambda \Haus_w^{n-1}(\bou F\cap \bou E)\right\} \fullstop
\end{equation*}
The following lemma is the analogue \cite[Lemma 3.2]{FMP}.
\begin{lemma}\label{maximal}
Consider $n,\,\alpha,\,\Sigma,\,w$ satisfying \cref{eq:assumptions}.
Let $E\subseteq\Sigma$ be a set of finite $w$-perimeter with $0<w(E)<\infty$, and let $\lambda>1$. If the family $\Gamma_\lambda$ is not empty, then it admits a maximal element with respect to the order relation given by set inclusion up to sets of measure zero.
\end{lemma}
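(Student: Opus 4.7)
My plan is to apply Zorn's lemma to $\Gamma_\lambda$ endowed with the partial order of inclusion up to $\Leb^n$-null sets. It therefore suffices to show that every chain $\mathcal{C}\subseteq\Gamma_\lambda$ admits an upper bound lying in $\Gamma_\lambda$.

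Fix such a chain and set $s\defeq\sup\{w(F):F\in\mathcal{C}\}\le\tfrac12 w(E)$. Since $\mathcal{C}$ is totally ordered, I can extract from it an increasing sequence $F_1\subseteq F_2\subseteq\cdots$ with $w(F_j)\nearrow s$, and define $F\defeq\bigcup_j F_j$. A short dichotomy using the total order shows that $F$ is an upper bound for the whole chain modulo null sets: given $G\in\mathcal{C}$, either $G\subseteq F_j$ for some $j$ (hence $G\subseteq F$), or $F_j\subseteq G$ for every $j$, in which case $w(G)\ge s$ forces $w(G)=s$ and consequently $F=G$ up to null sets.

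To conclude I must verify that $F\in\Gamma_\lambda$. The volume bound $0<w(F)=s\le\tfrac12 w(E)$ follows from monotone convergence. The lower semicontinuity of the weighted perimeter under $L^1_{\mathrm{loc}}$ convergence yields
\[
\Per_w(F)\le\liminf_j\Per_w(F_j)\le\lambda\,\liminf_j\Haus^{n-1}_w(\bou F_j\cap\bou E),
\]
so the required perimeter inequality for $F$ reduces to the one-sided estimate
\[
\liminf_j\Haus^{n-1}_w(\bou F_j\cap\bou E)\le\Haus^{n-1}_w(\bou F\cap\bou E).
\]

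This last inequality is the main obstacle and the only non-routine point. The guiding heuristic is that, for an increasing sequence $F_j\uparrow F$ with $F_j\subseteq E$, a point $x\in\bou F_j\cap\bou E$ has density $\tfrac12$ in both $E$ and $F_j$; since $F_j\subseteq F_k\subseteq E$ for $k\ge j$, the set $F_k$ must also have density $\tfrac12$ at $x$, whence $x\in\bou F_k$ up to $\Haus^{n-1}$-null sets. Consequently, the family $(\bou F_j\cap\bou E)_{j\in\N}$ is increasing modulo $\Haus^{n-1}$-null sets, and a symmetric density argument at points of $\bou F\cap\bou E$ identifies the union of this family with $\bou F\cap\bou E$ up to such null sets. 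Once this is made rigorous via the density characterization of the reduced boundary (see \cite{maggi2012}), monotone convergence for the measure $\Haus^{n-1}_w$ yields the desired estimate, and Zorn's lemma produces the claimed maximal element.
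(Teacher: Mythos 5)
Your proof is correct, but it takes a genuinely different route from the paper's. You invoke Zorn's lemma and reduce the problem to showing that every chain in $\Gamma_\lambda$ has an upper bound, which you do by extracting a countable cofinal increasing sequence (possible because the supremum $s$ of weighted volumes along the chain is a countable extremal quantity) and taking its union. The paper avoids Zorn entirely: it constructs a specific increasing sequence $(F_i)$ by repeatedly choosing $F_{i+1}\supseteq F_i$ with nearly maximal volume among all elements of $\Gamma_\lambda$ containing $F_i$, and then shows directly that the limit $F_\infty$ is maximal, because the gap $s_i-w(F_i)$ halves at each step, so any disjoint $H$ with $F_\infty\cup H\in\Gamma_\lambda$ must have $w(H)=0$. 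The technical core is identical in both proofs: an increasing sequence in $\Gamma_\lambda$ has its union in $\Gamma_\lambda$, by lower semicontinuity of the weighted perimeter together with the observation that $(\bou F_j\cap\bou E)_j$ is increasing (up to $\Haus^{n-1}$-null sets) and sits inside $\bou F_\infty\cap\bou E$; the paper states this monotonicity without proof, whereas you sketch a density argument (which is right, though to go from ``density $1/2$ in $F_k$'' to ``$x\in\bou F_k$ up to null sets'' you are implicitly using Federer's theorem). Your approach is shorter and more familiar but uses the full axiom of choice; the paper's is more hands-on, only needs countable (dependent) choice, and produces the maximal element constructively. Both are valid. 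One small cosmetic caveat: since the order is inclusion modulo null sets, when you extract the increasing sequence you should adjust representatives so that $F_j\subseteq F_{j+1}$ holds as a genuine set inclusion before forming the union $F=\bigcup_j F_j$; also, for the perimeter estimate you only need the one-sided inclusion $\bou F_j\cap\bou E\subseteq\bou F\cap\bou E$ (up to null sets), not the full identification of the union with $\bou F\cap\bou E$ that you mention.
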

\begin{proof}
We define the increasing sequence $(F_i)_{i\in\N}$ of sets in $\Gamma_\lambda$ in the following way. Let $F_1$ be any element of $\Gamma_\lambda$ and, once $F_i$ has been defined, we consider
$$\Gamma_\lambda(i)=\{F\in \Gamma_\lambda:\,F_i \subseteq F\}.$$
Now, let $F_{i+1}$ be an element of $\Gamma_\lambda(i)$ which satisfies
$$w(F_{i+1})\geq \frac{w(F_i)+s_i}{2},\quad \mbox{where}\;s_i=\sup_{F\in \Gamma_\lambda(i)} w(F).
$$
Since $F_i$ is an increasing sequence of sets it admits a limit, we call it $F_\infty$. In what follows we show that $F_\infty \in \Gamma_\lambda$ and $F_\infty$ is a maximal element in $\Gamma_\lambda$.

First, we observe that $w(F_\infty)=\sup_{i\in \N}w(F_i)\leq w(E)/2$. Moreover, by lower semicontinuity of the weighted perimeter, we have
$$ \Per_w(F_\infty)\leq \liminf_{i\to\infty} \Per_w(F_i)\leq \lambda \liminf_{i\to\infty}\H_w^{n-1}(\bou F_i \cap \bou E).$$
Since $F_i \subseteq F_{i+1}\subseteq F_\infty\subseteq E$, we have (up to $\Haus^{n-1}$-negligible sets)
\begin{equation*}
    (\bou F_i \cap \bou E)\subseteq (\bou F_{i+1} \cap \bou E)\subseteq (\bou F_\infty \cap \bou E) \comma
\end{equation*}
therefore $F_\infty \in \Gamma_\lambda$. In order to show the maximality of $F_\infty$, we consider another subset $H\subseteq E$ such that $H\cap F_\infty=\emptyset$ and $H\cup F_\infty \in \Gamma_\lambda$. By construction $F_\infty \cup H \in \Gamma_\lambda(i)$, so that for every $i\in\N$
$$s_i\geq w(F_\infty \cup H)\geq w(F_{i+1})+w(H)\geq \frac{w(F_i)+s_i}{2}+w(H),$$
that is, $w(H)\leq (s_i-w(F_i))/2$. Since $s_i-w(F_i)\leq 2w(F_{i+1}\setminus F_i)\rightarrow 0$ as $i\rightarrow \infty$, we have deduced that $w(H)=0$, which gives the maximality of $F_\infty$.
\end{proof}

Recall that we denote the isoperimetric deficit by
$$\delta_w(E)\defeq\frac{\Per_w(E)}{D w(E)^{\frac{D-1}{D}}}-1,$$
where $D=n+\alpha$ is the isoperimetric constant (since we are assuming $w(B_1\cap\Sigma)=1$).

We want to show that if $E$ is almost optimal, then any subset $F$ of $E$ which makes $\tau(E)$ small enough has small volume. In order to do that, following \cite{FMP}, we introduce the strictly concave function $\Psi:[0,1]\rightarrow [0, 2^{1/D}-1]$ given by
$$\Psi(t)\defeq t^{\frac{D-1}{D}}+(1-t)^{\frac{D-1}{D}}-1.$$
We observe that
\begin{equation}\label{psi}
\Psi(t)\geq (2-2^{\frac{D-1}{D}})t^{\frac{D-1}{D}},\quad \mbox{for}\quad t\in \cc0{\tfrac{1}{2}}
\end{equation}
and we set
\begin{equation}\label{kD}
k(D)\defeq \frac{2-2^{\frac{D-1}{D}}}{3},
\end{equation}
so that $\Psi(t)\geq 3 k(D)t^{\frac{D-1}{D}}$ for $t\in\cc0{\frac12}$.

The following lemma is the analogue of \cite[Lemma 3.3]{FMP}.
\begin{lemma}\label{lem:removal}
Consider $n,\,\alpha,\,\Sigma,\,w$ satisfying \cref{eq:assumptions}.
Let $E, F$ be two sets of finite $w$-perimeter, with $F\subseteq E\subseteq \Sigma$ such that
\begin{equation}\label{hp-removal}
0<w(F)<\frac{w(E)}{2}<\infty \quad \mbox{and}\quad \Per_w(F)\leq(1+k(D))\mathcal H_w^{n-1}(\bou E\cap \bou F) \fullstop
\end{equation}
Then we have:
\begin{enumerate}[label=(\roman*)] \setlength\itemsep{0.6em}
\item \label{it:removal_volume} $\displaystyle w(F)\leq \left(\frac{\delta_w(E)}{k(D)}\right)^{\frac{D}{D-1}}w(E)$;
\item  \label{it:removal_perimeter} $\displaystyle \Per_w(E\setminus F)\leq \Per_w(E)$;
\item  \label{it:removal_deficit} If $\delta_w(E)\leq k(D)$, then $\displaystyle\delta_w(E\setminus F)\leq \frac{3}{k(D)}\delta_w(E)$.
\end{enumerate}
\end{lemma}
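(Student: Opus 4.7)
The plan is to adapt the FMP scheme to the weighted setting. Everything hinges on the identity for the weighted perimeter of a difference of sets, together with the (weighted) isoperimetric inequality of \cref{isop-thm}. I organize the proof in the order $(ii) \Rightarrow (i) \Rightarrow (iii)$, since each step relies on the previous one.

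The starting point is the decomposition, valid up to $\Haus^{n-1}$-negligible sets,
\begin{equation*}
    \Per_w(E) + \Per_w(F) = \Per_w(E\setminus F) + 2\Haus^{n-1}_w(\bou E\cap\bou F) \fullstop
\end{equation*}
For \emph{(ii)}, I rearrange this identity and plug in the hypothesis $\Per_w(F)\le (1+k(D))\Haus^{n-1}_w(\bou E\cap\bou F)$, which gives $\Haus^{n-1}_w(\bou E\cap\bou F)\ge \Per_w(F)/(1+k(D))$. A short manipulation then yields
\begin{equation*}
    \Per_w(E\setminus F) \le \Per_w(E) - \frac{1-k(D)}{1+k(D)}\,\Per_w(F),
\end{equation*}
which proves \emph{(ii)} because $k(D)<1$ by \cref{kD}.

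For \emph{(i)}, I apply the weighted isoperimetric inequality \cref{isop-ineq} to both $F$ and $E\setminus F$, combine with the sharpened bound from \emph{(ii)}, divide through by $D\,w(E)^{(D-1)/D}$, and set $t\defeq w(F)/w(E)\in(0,1/2]$. This produces the one-variable inequality
\begin{equation*}
    (1-t)^{(D-1)/D} + \frac{1-k(D)}{1+k(D)}\,t^{(D-1)/D} \le 1+\delta_w(E),
\end{equation*}
which rearranges to $\Psi(t)\le \delta_w(E) + \frac{2k(D)}{1+k(D)}\,t^{(D-1)/D}$. Plugging in the lower bound $\Psi(t)\ge 3k(D)\,t^{(D-1)/D}$ from \cref{psi,kD} and absorbing $\frac{2k(D)}{1+k(D)}\le 2k(D)$ gives $k(D)\,t^{(D-1)/D}\le\delta_w(E)$, which is exactly \emph{(i)}.

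For \emph{(iii)}, I use \emph{(ii)} together with $w(E\setminus F)=(1-t)w(E)$ to obtain
\begin{equation*}
    1+\delta_w(E\setminus F) \le \frac{\Per_w(E)}{D\,((1-t)w(E))^{(D-1)/D}} = \frac{1+\delta_w(E)}{(1-t)^{(D-1)/D}}.
\end{equation*}
The elementary inequality $(1-t)^{\beta}\ge 1-t^{\beta}$ for $\beta=(D-1)/D\in(0,1)$ and $t\in[0,1]$ (which follows since $t\mapsto (1-t)^{\beta}+t^{\beta}$ attains its minimum at the endpoints), combined with the estimate $t^{(D-1)/D}\le\delta_w(E)/k(D)\le 1$ coming from \emph{(i)} and the assumption $\delta_w(E)\le k(D)$, bounds the denominator from below by $1-\delta_w(E)/k(D)$. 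A final direct computation yields $\delta_w(E\setminus F)\le 3\delta_w(E)/k(D)$. The main obstacle is purely bookkeeping: tracking the constants through \emph{(iii)} so that the clean factor $3/k(D)$ survives; if necessary one absorbs the extra slack by using $\delta_w(E)/k(D)\le 1$ and $t\le 1/2$ together, which pins down the denominator away from zero.
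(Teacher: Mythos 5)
Your proof is correct and follows essentially the same route as the paper: the same perimeter decomposition for $F\subseteq E$, the same application of \cref{isop-ineq} to $F$ and $E\setminus F$, and the same $\Psi$/$k(D)$ arithmetic (you get the mildly sharper constant $\tfrac{1-k(D)}{1+k(D)}$ in place of the paper's $1-2k(D)$, but this is immaterial). The one place to be careful is step \emph{(iii)}: the lower bound $1-\delta_w(E)/k(D)$ on $(1-t)^{(D-1)/D}$ is not by itself enough (it can be $0$ when $\delta_w(E)=k(D)$), but as you note the bound $t\le\tfrac12$ gives $(1-t)^{(D-1)/D}\ge 2^{-(D-1)/D}\ge\tfrac12$, and combining this with $1-(1-t)^{(D-1)/D}\le t^{(D-1)/D}\le\delta_w(E)/k(D)$ yields $(1-t)^{-(D-1)/D}-1\le 2\delta_w(E)/k(D)$, from which the final bound follows once one observes that $k(D)=\tfrac{2-2^{(D-1)/D}}{3}<\tfrac13$ for $D>1$; the paper instead uses $(1-t)^{-(D-1)/D}\le 1+2t$, which is an equally short path to the same conclusion.
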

\begin{proof}
Using the second inequality in \cref{hp-removal}, we have that
\begin{align}
\Per_w(E) 
&= \Per_w(E\setminus F)+\Per_w(F)-2\Haus^{n-1}_w(\bou F\cap E^{(1)})\nonumber \\
&=\Per_w(E\setminus F)+\Per_w(F)-2\left(\Per_w(F)-\Haus^{n-1}_w(\bou F\cap \bou E)\right)\nonumber \\
&\geq \Per_w(E\setminus F)+\Per_w(F)-2k(D)\Haus^{n-1}_w(\bou F\cap \bou E)\nonumber \\
&\geq \Per_w(E\setminus F)+\Per_w(F)\left(1-2k(D)\right) \label{1-removal}\\
&\geq D\, w(E\setminus F)^{\frac{D-1}{D}} + \left(1-2k(D)\right) D\, w(F)^{\frac{D-1}{D}}
\comma \label{2-removal}
\end{align}
where in the last estimate we have applied the weighted isoperimetric inequality to the sets $E\setminus F$ and $F$.

We set now $t\defeq w(F)/w(E)$, so that $w(E\setminus F)/w(E)=1-t$ and, by the first assumption in \cref{hp-removal} we have $t\leq 1/2$. Dividing by $D\,w(E)^{\frac{D-1}{D}}$ in \cref{2-removal}, we get
\begin{equation*}
\delta_w(E)=\frac{\Per_w(E)}{D\,w(E)^{\frac{D-1}{D}}}-1\geq(1-t)^{\frac{D-1}{D}} + \left(1-2k(D)\right) t^{\frac{D-1}{D}} -1.
\end{equation*}
Now we use the definitions of $\Psi$ and $k(D)$ and inequality \cref{psi}, to deduce
\begin{equation}\label{3-removal}
\delta_w(E)\geq \Psi(t)-2k(D) t^{\frac{D-1}{D}}\geq k(D) t^{\frac{D-1}{D}} =
k(D)\Big(\frac{w(F)}{w(E)}\Big)^{\frac{D-1}{D}}\comma
\end{equation}
that is equivalent to \cref{it:removal_volume}.

The estimate \cref{it:removal_perimeter} follows from \cref{1-removal}, using that $1-2k(D)\geq 0$.

It remains to show \cref{it:removal_deficit}.
First we observe that \cref{3-removal} and the assumption $\delta_w(E)\leq k(D)$ imply that
$$t\leq \left(\frac{\delta_w(E)}{k(D)}\right)^{\frac{D}{D-1}}\leq \frac{\delta_w(E)}{k(D)}.$$
Therefore we have that
\begin{equation*}
\begin{split}
\delta_w(E\setminus F)&=\frac{\Per_w(E\setminus F)}{D\, w(E\setminus F)^{\frac{D-1}{D}}}-1
=\frac{\Per_w(E\setminus F)}{D\ (1-t)^{\frac{D-1}{D}}w(E)^{\frac{D-1}{D}}}-1\\
&\leq \frac{\Per_w(E\setminus F)}{D\, w(E)^{\frac{D-1}{D}}}(1+2t)-1
=\delta_w(E)+2t\left(\delta_w(E)+1\right)\leq \frac{3}{k(D)}\delta_w(E) \comma
\end{split}
\end{equation*}
as wanted.
\end{proof}

Finally, the following theorem is the analogue of \cite[Theorem 3.4]{FMP}. It states that if $E$ has small isoperimetric deficit, then there exists a subset $G$ of $E$ which also has small deficit and, more importantly, such that $\tau(G)-1$ is bounded below away from zero. The idea of the proof consists in cutting away from $E$ the maximal critical set (whose existence is established in \cref{maximal}) and using the estimates of \cref{lem:removal}.
\begin{theorem}\label{thm:nice_subset_with_poincare}
Consider $n,\,\alpha,\,\Sigma,\,w$ satisfying \cref{eq:assumptions}.
Let $E\subseteq\Sigma$ be a set of finite weighted perimeter and suppose that $\delta_w(E)\leq k^2(D)/8$, with $k(D)$ given by \cref{kD}.

Then, there exists a set $G\subseteq E$ with finite $w$-perimeter which satisfies the following estimates:
\begin{enumerate}[label=(\alph*)] \setlength\itemsep{0.6em}
\item \label{it:fmp_volume} $\displaystyle w(E\setminus G)\leq \frac{\delta_w(E)}{k(D)}w(E)$;
\item \label{it:fmp_deficit} $\displaystyle \delta_w(G)\leq \frac{3}{k(D)}\delta_w(E)$;
\item \label{it:fmp_poincare} $\displaystyle \tau(G)\geq 1+k(D)$.
\end{enumerate}
\end{theorem}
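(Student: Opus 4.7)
The plan is to adapt the strategy of \cite[Theorem 3.4]{FMP} to the weighted setting, essentially reading off $G$ from a maximal ``bad'' subset of $E$. Set $\lambda\defeq 1+k(D)$ and split according to whether the family $\Gamma_{\lambda}$ associated to $E$ is empty.

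First I would dispatch the trivial case: if $\Gamma_{\lambda}=\emptyset$, take $G=E$. Then \cref{it:fmp_volume} and \cref{it:fmp_deficit} hold with zero on the left-hand side, and \cref{it:fmp_poincare} is exactly the statement that $\Gamma_\lambda=\emptyset$ read through the definition \cref{eq:cheeger_constant} of $\tau$.

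If instead $\Gamma_{\lambda}\neq\emptyset$, I would invoke \cref{maximal} to pick a maximal element $F\in\Gamma_\lambda$ and define $G\defeq E\setminus F$. The membership $F\in\Gamma_\lambda$ is precisely the hypothesis $\Per_w(F)\le (1+k(D))\mathcal H_w^{n-1}(\bou E\cap\bou F)$ of \cref{lem:removal}, so parts (i) and (iii) of that lemma deliver \cref{it:fmp_volume} and \cref{it:fmp_deficit} respectively, after using $\delta_w(E)/k(D)\le 1$ to simplify the exponent $D/(D-1)$ in (i) and $\delta_w(E)\le k(D)$ to activate (iii).

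For \cref{it:fmp_poincare} I would argue by contradiction: suppose there exists $F'\subseteq G$ with $0<w(F')\le w(G)/2$ and $\Per_w(F')\le (1+k(D))\,\Haus^{n-1}_w(\bou F'\cap\bou G)$. The key algebraic identity is the decomposition $\bou F'\cap\bou G=(\bou F'\cap\bou E)\sqcup(\bou F'\cap\bou F)$ together with the disjoint-union formula
\[
\Per_w(F\cup F')=\Per_w(F)+\Per_w(F')-2\,\Haus^{n-1}_w(\bou F\cap\bou F').
\]
Plugging in the perimeter inequality for $F$ (from $F\in\Gamma_\lambda$) and the hypothetical one for $F'$, and then using $k(D)\le 1$ to absorb the term $(k(D)-1)\Haus^{n-1}_w(\bou F\cap\bou F')$, one obtains $\Per_w(F\cup F')\le (1+k(D))\Haus^{n-1}_w(\bou(F\cup F')\cap\bou E)$, i.e.\ $F\cup F'$ satisfies the perimeter requirement for membership in $\Gamma_{\lambda}$, contradicting the maximality of $F$.

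The main obstacle is verifying the volume constraint $w(F\cup F')\le w(E)/2$ needed to place $F\cup F'$ in $\Gamma_\lambda$. From \cref{it:fmp_volume} and $\delta_w(E)\le k^2(D)/8$, the mass $w(F)$ is very small relative to $w(E)$, and the bound $w(F')\le w(G)/2=w(E)/2-w(F)/2$ already gives $w(F\cup F')\le w(E)/2+w(F)/2$, which exceeds $w(E)/2$ only by a small amount. When the inequality $w(F\cup F')\le w(E)/2$ holds, the contradiction is immediate; in the exceptional regime I would, following the standard coarea trick, replace $F'$ by a level set $F'\cap\{h>t^*\}$ of a suitable function whose super-level sets exhaust $F'$, choosing $t^*$ by continuity so that $w(F\cup(F'\cap\{h>t^*\}))=w(E)/2$ exactly, and check via the coarea formula that for a suitable $t^*$ the truncated set still satisfies the perimeter bound (some sub-level set of the ratio $\Per_w/\Haus^{n-1}_w(\bou\cdot\cap\bou G)$ must realize a value at most $1+k(D)$). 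This truncation step is the delicate one, but it is purely technical and parallels the standard argument in \cite{FMP}.
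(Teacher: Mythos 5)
Your overall architecture matches the paper's: the trivial case $\Gamma_\lambda=\emptyset$, the maximal set $F$ from \cref{maximal} with $G=E\setminus F$, parts \cref{it:fmp_volume} and \cref{it:fmp_deficit} from \cref{lem:removal} applied to $F\subseteq E$, and a contradiction argument for \cref{it:fmp_poincare}. Your algebraic handling of the perimeter constraint for $F\cup F'$ (via the decomposition $\bou F'\cap\bou G=(\bou F'\cap\bou E)\sqcup(\bou F'\cap\bou F)$ and absorbing the negative term $(k(D)-1)\Haus^{n-1}_w(\bou F\cap\bou F')$) is sound and, if anything, a little cleaner than the paper's bookkeeping.

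There is, however, a genuine gap in your treatment of the volume constraint $w(F\cup F')\le w(E)/2$, which you rightly flag as the main obstacle but then mishandle. The bound $w(F')\le w(G)/2$ is indeed too weak, and the ``coarea truncation'' you propose is both unspecified (which function $h$?) and unjustified (it is not clear that truncation preserves the ratio $\Per_w/\Haus^{n-1}_w(\bou\,\cdot\,\cap\bou G)$; super-level sets of a generic function need not respect the competitor structure). The correct move is much simpler: apply \cref{lem:removal}~\cref{it:removal_volume} a \emph{second} time, now to the pair $F'\subseteq G$. Indeed, the contradiction hypothesis says precisely that $F'$ satisfies the perimeter condition \cref{hp-removal} relative to $G$, and part \cref{it:fmp_deficit} of the theorem (already established) together with $\delta_w(E)\le k^2(D)/8$ gives $\delta_w(G)\le 3\delta_w(E)/k(D)\le 3k(D)/8\le k(D)$, so \cref{lem:removal} yields
\begin{equation*}
w(F')\le \frac{\delta_w(G)}{k(D)}\,w(G)\le \frac{3\,\delta_w(E)}{k^2(D)}\,w(E)\,,
\end{equation*}
a quantity far below $w(G)/2$. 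Summing with $w(F)\le \tfrac{\delta_w(E)}{k(D)}w(E)$ and using $\delta_w(E)\le k^2(D)/8$ gives $w(F\cup F')\le \tfrac{4\delta_w(E)}{k^2(D)}w(E)\le w(E)/2$. With this replacement your contradiction argument closes cleanly, and no truncation is needed.
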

\begin{proof}
If $\tau(E)\ge 1 + k(D)$, then we can choose $G\defeq E$ and there is nothing to prove.
Otherwise, let $F_\infty$ be the maximal critical set given in \cref{maximal} with $\lambda=1+k(D)$ (notice that $\Gamma_\lambda\not=\emptyset$ because $\tau(E)<1+k(D)$). We define the set $G$ as $G\defeq E\setminus F_\infty$. Since $F_\infty\in \Gamma_\lambda$, then we can apply \cref{lem:removal} to $F=F_\infty$ and we deduce the estimates \cref{it:fmp_volume,it:fmp_deficit}.

It remains to prove \cref{it:fmp_poincare}. In order to do that, we argue by contradiction, that is we assume that $\tau(G)<1+k(D)=\lambda$. By definition of $\tau(G)$ we have that there exists a set $H\subseteq G$ with $0\leq w(H)\leq w(G)/2$ and such that
$$\Per_w(H)<\lambda \Haus^{n-1}_w(\bou H\cap \bou G).$$
We aim to show that $F_\infty\cup H\in\Gamma_\lambda$, which gives a contradiction to the maximality of $F_\infty$. Thus, we will have completed the proof once we have checked that
\begin{equation}\label{contra}
0\leq w(F_\infty \cup H)\leq \frac{w(E)}{2} \quad \mbox{and}\quad \Per_w(F_\infty \cup H)\leq \lambda\Haus^{n-1}_w(\bou(F_\infty\cup H)\cap \bou E).
\end{equation}
Applying \cref{it:fmp_deficit}, the first estimate follows from \cref{lem:removal}. Indeed applying \cref{lem:removal} to $H\subseteq G$, we have that
\begin{equation*}
    w(H) \le 
    \frac{\delta_w(G)}{k(D)}w(G)\leq 3\frac{\delta_w(E)}{k^2(D)}w(E) \fullstop
\end{equation*}
Moreover, using that $F_\infty$ and $H$ are disjoint and applying \cref{lem:removal} to $F_\infty \subseteq E$, we deduce
\begin{equation*}
\begin{split}
w(F_\infty \cup H) = w(F_\infty)+w(H)\leq \frac{\delta_w(E)}{k(D)}w(E)+3\frac{\delta_w(E)}{k^2(D)}w(E)
\leq 4\frac{\delta_w(E)}{k^2(D)}w(E)\leq \frac{w(E)}{2} \comma
\end{split}
\end{equation*}
where in the last inequality we have used the assumption $\delta_w(E)\leq k^2(D)/8$.

It remains to prove the second estimate of \cref{contra}. We start by observing that
$$\Per_w(F_\infty \cup H)=\Haus^{n-1}_w(\bou(F_\infty \cup H)\cap E^{(1)})+
\Haus^{n-1}_w(\bou(F_\infty \cup H)\cap \bou E).$$
Since $\lambda=k(D)+1$, in order to conclude we have to show that
\begin{equation}\label{final}
\Haus^{n-1}_w(\bou(F_\infty \cup H)\cap E^{(1)})\leq k(D)
\Haus^{n-1}_w(\bou(F_\infty \cup H)\cap \bou E).
\end{equation}
First, we write
\begin{equation}\label{6}
\Haus^{n-1}_w(\bou(F_\infty \cup H)\cap E^{(1)})=\Haus^{n-1}_w((\bou F_\infty \setminus \bou H))\cap E^{(1)})+\Haus^{n-1}_w((\bou H\setminus \bou F_\infty)\cap E^{(1)})
\end{equation}
We now estimate the second term on the right-hand side of \cref{6}. Since (up to $\Haus^{n-1}$-negligible sets) it holds $(\bou H\setminus \bou F_\infty)\cap E^{(1)}\subseteq G^{(1)}$, we have
\begin{equation}\label{7}
\begin{split}
\Haus^{n-1}_w((\bou H\setminus \bou F_\infty)\cap E^{(1)})&\leq \Haus^{n-1}_w(\bou H\cap G^{(1)}) \\
&=\Per_w(H)-\Haus^{n-1}_w(\bou H\cap \bou G)\\
&\leq k(D) \Haus^{n-1}_w(\bou H\cap \bou G),
\end{split}
\end{equation}
where in the last estimate we have used $\Per_w(H)\leq \lambda \Haus^{n-1}_w(\bou H\cap \bou G)$.

Combining together \cref{6} and \cref{7}, and using that $F_\infty \in \Gamma_\lambda$, we deduce that
\begin{equation*}
\begin{split}
\Haus^{n-1}_w(\bou(F_\infty \cup H)\cap E^{(1)})&\leq \Haus^{n-1}_w((\bou F_\infty \setminus \bou H)\cap E^{(1)})+k(D)\Haus^{n-1}_w(\bou H\cap \bou G)\\
&=\Haus^{n-1}_w((\bou F_\infty \setminus \bou H)\cap E^{(1)})\\
&\hspace{1em}+k(D)\Big[\Haus^{n-1}_w((\bou H\cap \bou G)\cap E^{(1)})
+\Haus^{n-1}_w((\bou H\cap \bou G)\cap \bou E)\Big] \\
&\leq \Haus^{n-1}_w(\bou F_\infty \cap E^{(1)}) + k(D) \Haus^{n-1}_w((\bou H\cap \bou G)\cap \bou E)\\
&\leq k(D) \Big[\Haus^{n-1}_w(\bou F_\infty \cap \bou E) + \Haus^{n-1}_w((\bou H\cap \bou G)\cap \bou E)\Big]\\
&\leq k(D) \Haus^{n-1}_w(\bou (F_\infty \cup H)\cap \bou E) \fullstop
\end{split}
\end{equation*}
We have established \cref{final} and therefore the proof is concluded.
\end{proof}

\begin{corollary}[Relative isoperimetric inequality in the ball]\label{cor:rel_isop_ball}
    For any $F\subseteq B_1\cap \Sigma$ such that $w(F)\leq \frac12 w(B_1\cap \Sigma)$, it holds
    $$
    w(F)\lesssim \Haus^{n-1}_w(\bou F\cap B_1\cap \Sigma) \,.
    $$
\end{corollary}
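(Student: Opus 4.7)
The plan is to apply \cref{thm:nice_subset_with_poincare} directly to the set $E\defeq B_1\cap\Sigma$. This is the natural candidate because $E$ is a minimizer of the weighted isoperimetric inequality, hence $\delta_w(E)=0$, and in particular the hypothesis $\delta_w(E)\le k^2(D)/8$ of the theorem is trivially satisfied. The theorem then furnishes a set $G\subseteq E$ with $w(E\setminus G)\le \delta_w(E) w(E)/k(D) = 0$, so that $G=E$ up to a $w$-negligible set. Conclusion \cref{it:fmp_poincare} of the theorem yields $\tau(E)=\tau(G)\ge 1+k(D)$.

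Once the Cheeger-type bound $\tau(E)\ge 1+k(D)>1$ is in hand, the statement is immediate from \cref{eq:rel_isop_general}. Namely, for any $F\subseteq E = B_1\cap\Sigma$ with $w(F)\le \tfrac{1}{2}w(B_1\cap\Sigma)$, \cref{eq:rel_isop_general} yields
\begin{equation*}
    c_*\,w(F)^{\frac{D-1}{D}}\le \Per_w(F)\le \frac{\tau(E)}{\tau(E)-1}\,\Haus^{n-1}_w(\bou F\cap \bou E) \le \frac{1+k(D)}{k(D)}\,\Haus^{n-1}_w(\bou F\cap B_1\cap\Sigma),
\end{equation*}
where we used that, for $F\subseteq B_1\cap\Sigma$, $\bou F\cap\bou E$ agrees (up to $\Haus^{n-1}$-negligible sets) with $\bou F\cap B_1\cap\Sigma$ when viewed as the part of the reduced boundary lying in the interior of the cone.

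Finally, since $w(F)\le \tfrac{1}{2}w(B_1\cap\Sigma)=\tfrac{1}{2}$ is bounded above by a quantity depending only on $n,\alpha,\Sigma,w$, we may absorb the loss of exponent via $w(F)\lesssim w(F)^{(D-1)/D}$, obtaining the desired linear estimate $w(F)\lesssim \Haus^{n-1}_w(\bou F\cap B_1\cap\Sigma)$. There is essentially no obstacle here: the whole argument is a direct consequence of \cref{thm:nice_subset_with_poincare} applied to the minimizer, together with \cref{eq:rel_isop_general}. The only point worth double-checking is that $\tau$ is unaffected by modifications on $w$-null sets (since both the numerator and denominator in its definition are defined through the reduced boundary and $w$-perimeter), which justifies replacing $G$ by $E$.
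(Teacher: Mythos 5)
Your proof is correct and takes essentially the same route as the paper: invoke \cref{thm:nice_subset_with_poincare} with $E=G=B_1\cap\Sigma$ (since $\delta_w(B_1\cap\Sigma)=0$) to get $\tau(B_1\cap\Sigma)\ge 1+k(D)$, then conclude via \cref{eq:rel_isop_general} and absorb the exponent using $w(F)\le 1/2$. One minor slip: in your display you wrote $\Haus^{n-1}_w(\bou F\cap \bou E)$ where \cref{eq:rel_isop_general} has $\Haus^{n-1}_w(\bou F\cap E)$ (i.e., the part of the reduced boundary in the \emph{interior} of $E$, which for $E=B_1\cap\Sigma$ is $\bou F\cap B_1\cap\Sigma$); your ensuing explanation makes clear this is just a typo and the intended argument is the correct one.
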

\begin{proof}
    Since $\delta_w(B_1\cap\Sigma) = 0$, \cref{thm:nice_subset_with_poincare} (with $E=G=B_1\cap\Sigma$) tells us that $\tau(B_1\cap\Sigma)\ge 1 + k(D)$. Then, the statement follows from \cref{eq:rel_isop_general}.
\end{proof}

\section{Proof of the main result}\label{sec:main_proof}

We prove three increasingly stronger statements: the characterization of optimal sets, \cref{prop:uniqueness}, a nonquantitative stability for almost-optimal sets, \cref{lem:nonquantitative}, and finally the quantitative weighted isoperimetric inequality, \cref{thm:main}. 
We prove them separately because we use the characterization of optimal sets in the proof of the nonquantitative stability and we apply the nonquantitative stability in the proof of the quantitative weighted isoperimetric inequality.

\begin{proof}[Proof of \cref{prop:uniqueness}]
    Let $E\subseteq\Sigma$ be an optimal set for the weighted isoperimetric inequality. Without loss of generality we can assume $w(E)=w(B_1\cap\Sigma)=1$.
    Thanks to \cref{thm:nice_subset_with_poincare} and \cref{lem:trace_poincare}, we know that $E$ satisfies a weighted Poincar\'e inequality (since it must hold $G=E$ in the statement of \cref{thm:nice_subset_with_poincare} because $\delta_w(E)=0$).
    Let $\varphi:\R^n\to\R^n$ be the convex map described in \cref{prop:weighted_coupling_properties} (notice that $E$ is $w$-indecomposable because $\tau(E)>1$).
    From \cref{eq:hessian_control}, it follows that there exists $x_0\in\R^n$ such that $\nabla\varphi(x)-x=-x_0$ for any $x\in E$. Hence, since $\nabla\varphi(E)=B_1\cap\Sigma$, it holds $E = x_0 + B_1\cap\Sigma$.
    To conclude, notice that $\Per_w(E)=\Per_w(B_1\cap\Sigma)$ implies $x_0+\partial\Sigma\subseteq\partial\Sigma$ and therefore $x_0\in\R^k\times\{0_{\R^{n-k}}\}$.
\end{proof}

Using the characterization of optimal sets, we next show the following.

\begin{lemma}[Nonquantitative stability]\label{lem:nonquantitative}
    Consider $n,\,\alpha,\,\Sigma,\,w$ satisfying \cref{eq:assumptions} and let $(E_i)_{i\in\N}\subseteq\Sigma$ be a sequence of sets of finite $w$-perimeter such that $E_i\subseteq\Sigma$, $w(E_i)=1$, and $\delta_w(E_i)\to 0$.
    Then $w(E_i\triangle (B_1(x_i)\cap\Sigma))\to 0$ for an appropriate choice of $x_i\in\R^k\times\{0_{\R^{n-k}}\}$.
\end{lemma}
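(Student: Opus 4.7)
The strategy is to combine the qualitative machinery developed above with a compactness argument, closing the loop through the rigidity of minimizers from \cref{prop:uniqueness}. By a standard subsequence argument it suffices to show that every subsequence of $(E_i)$ admits a further subsequence for which the conclusion holds.

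Apply \cref{thm:nice_subset_with_poincare} to each $E_i$ to produce $G_i \subseteq E_i$ with $w(E_i \setminus G_i) \to 0$, $\delta_w(G_i) \to 0$, and $\tau(G_i) \ge 1 + k(D)$; in particular each $G_i$ is $w$-indecomposable, so (after the harmless rescaling that normalizes its $w$-volume to $1$) \cref{prop:weighted_coupling_properties} supplies convex $C^{1,1}$ couplings $\varphi_i \colon \R^n \to \R$ with $\nabla \varphi_i(G_i) = B_1 \cap \Sigma$ up to negligible sets, $|\nabla\varphi_i|\le 1$ a.e., and $\int_{G_i} |\nabla^2 \varphi_i - \id|\, w \to 0$. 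Applying the $L^1$ weighted Poincar\'e inequality (which follows from the Sobolev--Poincar\'e part of \cref{lem:trace_poincare} via H\"older, since $w(G_i)\le 1$) componentwise to $x \mapsto \nabla\varphi_i(x) - x$, I obtain vectors $\hat x_i \in \R^n$ such that $\int_{G_i} |\nabla\varphi_i - (x - \hat x_i)|\, w \to 0$. Since $w^{1/\alpha}$ is nonnegative and concave on $\R^k$-lines contained in $\Sigma$, it must be constant in the $\R^k$ direction, so $w$ is $\R^k$-translation-invariant; I translate $E_i$, $G_i$ and $\varphi_i$ in the $\R^k$ direction so that $\hat x_i \in \{0_{\R^k}\} \times \R^{n-k}$, without altering the $w$-volume, $w$-perimeter, or deficit.

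Next I show that $\{\hat x_i\}$ is bounded. From $|\nabla\varphi_i| \le 1$ and the Poincar\'e estimate above, $\int_{G_i}(|x - \hat x_i| - 1)_+\, w \to 0$, hence $w(G_i \cap B_{3/2}(\hat x_i)) \to 1$. Suppose for contradiction that $|\hat x_i| \to \infty$ along some subsequence with $\hat x_i/|\hat x_i| \to v$. Using the $\alpha$-homogeneity of $w$, one checks that either $w(B_{3/2}(\hat x_i) \cap \Sigma) \to +\infty$ (when $v \in \overline\Sigma$ with $w(v) > 0$, since then $w \gtrsim |\hat x_i|^\alpha$ on a fixed portion of the ball) or $w(B_{3/2}(\hat x_i) \cap \Sigma) \to 0$ (when the ball eventually escapes $\Sigma$); both contradict $w(G_i \cap B_{3/2}(\hat x_i)) \to 1$. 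The remaining boundary case $v \in \partial\Sigma$ with $w(v) = 0$ is handled similarly, by comparing the order of vanishing of $w$ near $v$ with the homogeneity-driven growth $|\hat x_i|^\alpha$. Hence, up to a further subsequence, $\hat x_i \to x_\infty \in \R^n$.

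The $G_i$ are then essentially contained in the fixed ball $B_2(x_\infty)$, on which $w$ is integrable. BV compactness applied on an exhaustion of $\Sigma$ by compact subsets where $w$ is bounded away from zero yields, up to another subsequence, a limit $G \subseteq \Sigma$ with $\chi_{G_i} \to \chi_G$ in $L^1_w$. Combined with $w(G_i) \to 1$ this gives $w(G) = 1$, while lower semicontinuity of $\Per_w$ yields $\Per_w(G) \le \liminf \Per_w(G_i) = D$, so $G$ attains equality in the weighted isoperimetric inequality. \cref{prop:uniqueness} then forces $G = B_1(x_*) \cap \Sigma$ for some $x_* \in \R^k \times \{0\}$. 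Combining with $w(E_i \setminus G_i) \to 0$ and reversing the $\R^k$-translations, we conclude $w(E_i \triangle (B_1(x_i) \cap \Sigma)) \to 0$ along the chosen subsequence, for suitable $x_i \in \R^k \times \{0\}$, as required. The main technical obstacle is the boundedness of $\{\hat x_i\}$, which requires using the $\alpha$-homogeneity of $w$ in an essential way.
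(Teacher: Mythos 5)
Your overall strategy -- apply \cref{thm:nice_subset_with_poincare}, extract the coupling, translate to center it, pass to a limit by BV compactness, and invoke \cref{prop:uniqueness} for the rigidity -- is the same as the paper's. The genuine divergence is in how one rules out $|\hat x_i|\to\infty$, and here your argument has a gap.

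First, the claim that $w(B_{3/2}(\hat x_i)\cap\Sigma)\to\infty$ by itself contradicts $w(G_i\cap B_{3/2}(\hat x_i))\to 1$ is not correct: $G_i$ is a subset of that ball, so its weighted measure can perfectly well equal $1$ while the ball's weighted measure blows up. What saves the situation (and what the paper uses, via the area formula and the uniform $C^{1,1}$ bound $\|\nabla^2\varphi_i\|_\infty\lesssim 1$) is the lower bound $\inf_i|G_i|\gtrsim |B_1\cap\Sigma|>0$ on the \emph{Lebesgue} measure: if $w\sim|\hat x_i|^\alpha$ throughout $B_{3/2}(\hat x_i)$ and $G_i$ is essentially contained in that ball, then $|G_i|\lesssim |\hat x_i|^{-\alpha}\to 0$, a contradiction. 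You never establish this Lebesgue lower bound, so this case is not closed.

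Second, and more seriously, the boundary case $v\in\partial\Sigma$ with $w(v)=0$ is not merely "handled similarly". In that regime it can happen that $w(B_{3/2}(\hat x_i)\cap\Sigma)$ remains bounded and bounded away from zero, so neither of your two alternatives ("$\to\infty$" or "$\to 0$") occurs and no contradiction arises from weight comparisons. Concretely, take $n=2$, $\Sigma=\{y>|x|\}$, $w(x,y)=y-x$ (so $\alpha=1$, $w^{1/\alpha}$ linear hence concave), and $\hat x_i=i(1,1)/\sqrt2$: the ball $B_{3/2}(\hat x_i)\cap\Sigma$ stays essentially a fixed-size half-disc near the facet $\{y=x\}$, on which $w$ is of order $1$, so $w(B_{3/2}(\hat x_i)\cap\Sigma)$ is bounded. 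A set $G_i$ with $w(G_i)=1$ sitting in this sliding half-disc is perfectly compatible with your estimates; the obstruction is topological, not quantitative. This is exactly the configuration the paper's proof is designed to kill: after translating by $x_i$, the cones $\Sigma-x_i$ converge to a \emph{strictly larger} cone $\Sigma'$ containing a line, and the estimate combining the boundary control \cref{eq:boundary_control} with $w(x)\gtrsim d(x,\partial\Sigma)^\alpha$ forces $\bou E_\infty\subseteq(\partial B_1\cap\Sigma)\cup\partial\Sigma'$; since the complement of this set has only unbounded components, the nonnegligible limit set $E_\infty\subseteq B_3$ would have to be unbounded, which is absurd. Without an argument of this geometric/connectivity flavor (or some other replacement), your proof of boundedness of $\{\hat x_i\}$ does not go through.
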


\begin{proof}
We assume that $\Sigma$ contains no lines, that is $k=0$. It is easy to adapt the proof to handle the case $k>0$.
We call a \emph{minor perturbation} of the sequence, any replacement of $E_i$ with $E'_i$, with $w(E_i\triangle E'_i)\to 0$ and $\delta_w(E_i')\to 0$. We will not change the naming of the sets when performing minor perturbations. 

Applying \cref{thm:nice_subset_with_poincare}, up to a minor perturbation, we can assume $\tau(E_i) \ge 1 + k(D)$. Hence, thanks to \cref{lem:trace_poincare}, the sets $E_i$ enjoy \emph{nontrivial} Poincar\'e and trace inequalities. Let us denote with $\varphi_i:\R^n\to\R$ the convex function described in \cref{prop:weighted_coupling_properties} relative to $E_i$ (notice that $E_i$ is $w$-indecomposable because $\tau(E_i)>1$).
From the estimate \cref{eq:hessian_control}, thanks to the trace and Poincar\'e inequalities, it follows that there exists a sequence of points $(x_i)_{i\in\N}\subseteq\R^n$ such that
\begin{align}
    &\int_{\bou E_i\cap\Sigma} \abs{\nabla\varphi_i - (x-x_i)}\, w(x)\de\Haus^{n-1}(x) 
    \longrightarrow 0 \quad\text{as $i\to\infty$}\comma \label{eq:main_proof_boundary_nabla_i}\\
    &\int_{E_i} \abs{\nabla\varphi_i-(x-x_i)}\, w(x)\de x 
    \longrightarrow 0 \quad\text{as $i\to\infty$} \label{eq:main_proof_inside_nabla_i}\fullstop
\end{align}
From \cref{eq:main_proof_inside_nabla_i} we deduce $w(E_i\setminus B_2(x_i))\to 0$, and therefore there is $2<r<3$ such that $E_i\to E_i\cap B_r(x_i)$ is a minor perturbation. Hence we can assume $E_i\subseteq B_3(x_i)$. Repeating the argument that led to \cref{eq:main_proof_boundary_nabla_i,eq:main_proof_inside_nabla_i} for the original $E_i$, we can also assume that \cref{eq:main_proof_boundary_nabla_i,eq:main_proof_inside_nabla_i} hold.

Since $\nabla\varphi_i(E_i)=B_1\cap\Sigma$ and $\norm{\nabla^2\varphi_i}_{\infty}$ is uniformly bounded, the area formula implies that $\inf_i\abs{E_i} > 0$.
Combining \cref{eq:main_proof_boundary_nabla_i} with \cref{eq:boundary_control} we deduce
\begin{equation*}
    \int_{\bou E_i\cap\Sigma}d\big(x-x_i, \partial B_1\cap\Sigma\big)\,w(x)\de\Haus^{n-1}(x) \longrightarrow 0\fullstop
\end{equation*}
Because of the concavity of $w^{\frac1\alpha}$, it holds $\inf_{x\in\Sigma} \frac{w(x)}{d(x,\partial\Sigma)^\alpha}>0$ and therefore the last inequality implies
\begin{equation}\label{eq:main_proof_compactness_boundary}
    \int_{\bou (E_i-x_i)}d\big(x, \partial B_1\cap\Sigma\big)\,d(x,\Sigma-x_i)^\alpha\de\Haus^{n-1}(x) \longrightarrow 0\fullstop
\end{equation}
In the next two paragraphs we will use repeatedly the compactness of sets of finite perimeter with bounded perimeter (\cite[Theorem 12.26]{maggi2012}).

If $\abs{x_i}$ stays bounded, then (up to subsequence) the sequence $E_i$ converges to a set $E_\infty$ with $w(E_{\infty})=1$ and $\delta_w(E_{\infty})=0$. Then, since $B_1\cap\Sigma$ is the unique minimizer of the weighted isoperimetric inequality (see \cref{prop:uniqueness}), we deduce $E_\infty = B_1\cap\Sigma$.

On the other hand, let us show that $\abs{x_i}\to\infty$ yields a contradiction.
Notice that $\Sigma-x_i$ subconverge (locally) to a convex cone $\Sigma'$ that contains a line (here we use $\abs{x_i}\to\infty$) and such that $\Sigma\subseteq\Sigma'$. Since $\Sigma'$ contains a line, and $\Sigma$ does not, the cone $\Sigma'$ is \emph{strictly} larger than $\Sigma$. 
Moreover the sets $E_i-x_i$ subconverge, locally in $\Sigma'$, to a nonnegligible set of locally finite\footnote{Here \emph{locally finite} has to be understood in the sense of $\Sigma'$, that is for any $\Omega\compactsubset\Sigma'$ it holds $\Per(E,\Omega)<\infty$.} perimeter $E_\infty\subseteq B_3$ with $\abs{E_\infty}>0$. Finally, thanks to the limit~\cref{eq:main_proof_compactness_boundary}, we deduce
\begin{equation}\label{eq:tmp6713}
    \bou E_{\infty} \subseteq (\partial B_1\cap\Sigma)\cup \partial\Sigma' \fullstop
\end{equation}
Since $\R^n\setminus((\partial B_1\cap\Sigma)\cup \partial\Sigma')$ has two unbounded connected components (because $\Sigma'$ is strictly larger than $\Sigma$), it follows from \cref{eq:tmp6713} that the set $E_{\infty}$ is either empty or unbounded, thus we have found a contradiction.
\end{proof}

We can finally give the:

\begin{proof}[Proof of \cref{thm:main}]
In this proof we will denote with $C$ any constant that depends on $n,\alpha, \Sigma, w$. The value of the constant can change from line to line.

Without loss of generality, we may assume that $\delta_w(E)$ is small, and that $w(E)=w(B_1\cap\Sigma)=1$.
Thanks to \cref{thm:nice_subset_with_poincare} we can also assume that the Cheeger constant $\tau(E)$ defined in \cref{eq:cheeger_constant} 
satisfies $\tau(E)-1\gtrsim 1$ (up to replacing $E$ with the set $G$ described in the statement of \cref{thm:nice_subset_with_poincare}). Let $\varphi:\R^n\to\R$ be the convex function associated to $E$ described in \cref{prop:weighted_coupling_properties} (notice that $E$ is $w$-indecomposable because $\tau(E)>1$).

Applying the trace inequality \cref{eq:trace_ineq} and the Poincar\'e inequality \cref{eq:poincare_ineq} together with \cref{eq:hessian_control}, we deduce
\begin{align}
    &\int_{\bou E\cap\Sigma} \abs{\nabla\varphi - (x - x_0)}\,w(x)\de\Haus^{n-1}(x) \lesssim \delta_w(E)^{\frac12}\comma \label{eq:main_proof_boundary_nabla}\\
    &\int_{E} \abs{\nabla\varphi - (x-x_0)}\,w(x)\de x \lesssim
    \delta_w(E)^{\frac12}\comma \label{eq:main_proof_inside_nabla}
\end{align}
for a suitable $x_0\in\R^n$. Without loss of generality we can assume that $x_0=(0_{\R^k}, \widetilde x_0)$ with $\widetilde x_0\in\R^{n-k}$. Combining \cref{eq:main_proof_boundary_nabla} with \cref{eq:boundary_control}, it follows
\begin{equation}\label{eq:tmp87123}
    \int_{\bou E\cap\Sigma} \abs{\abs{x-x_0}-1}\, w(x) \de\Haus^{n-1}(x) \lesssim \delta_w(E)^{\frac12} \fullstop
\end{equation}
Since $\delta_w(E)$ is assumed to be small, \cref{lem:nonquantitative} implies the validity of the hypotheses needed by \cref{prop:close_to_translated_ball}. Hence, recalling \cref{eq:tmp87123}, we deduce
\begin{equation}\label{eq:close_to_translated_ball}
    w\big((B_1(x_0)\cap\Sigma)\triangle E\big) \lesssim \delta_w(E)^{\frac12} \fullstop
\end{equation}

It remains only to establish that $\abs{x_0}$ is controlled by $\delta_w(E)^{\frac12}$ (as this allows to replace $x_0$ with $0_{\R^n}$ in the last inequality).
First, we prove that $\pi_{\mathcal E}(x_0)\lesssim C\delta_w(E)^{\frac12}$ (applying \cref{prop:magic_compact_set}) and then we conclude that also the component of $x_0$ along $\mathcal C$ is controlled by $\delta_w(E)^{\frac12}$ (applying \cref{prop:controlling_constant_directions}).
Let us recall the notation introduced in \cref{eq:LCE}: $\mathcal L$ is the subspace of lines contained in $\Sigma$ (that is $\R^k\times\{0_{\R^{n-k}}\}$), $\mathcal C$ is the subspace orthogonal to $\mathcal L$ such that $w$ is constant moving along $\mathcal C$, $\mathcal E$ is the orthogonal subspace to $\mathcal L\times\mathcal C$.

Notice that \cref{eq:close_to_translated_ball} implies
\begin{equation}\label{eq:contains_smaller_ball}
    w\big((B_{\frac12}\cap\Sigma)\setminus E\big) \lesssim \delta_w(E)^{\frac12} \fullstop
\end{equation}
Let $\hat\eps, \hat Q$ be the small value and the compact set described in the statement of \cref{prop:magic_compact_set}. Thanks to \cref{eq:weight_control}, we know
\begin{equation}\label{eq:tmp117}
    \int_{\hat Q\cap E} \abs{w^{\frac1\alpha}(\nabla\varphi)-w^{\frac1\alpha}(x)} \de x \le C\delta_w(E)^{\frac12}\fullstop
\end{equation}
To proceed let us assume that $\abs{x_0}<\hat\eps$ (notice that $\hat\eps$ does not depend on $E$). Recall that, thanks to \cref{lem:nonquantitative}, we can assume that $\abs{x_0}$ is arbitrarily small (and our goal is to show that it is controlled by $\delta_w(E)^{\frac12}$).
For any $x\in \hat Q$ and any $y\in B_1\cap\Sigma$, it holds
\begin{equation*}
    \abs{w^{\frac1\alpha}(x-x_0)-w^{\frac1\alpha}(y)} \le C \abs{y-(x-x_0)} \comma
\end{equation*}
where $C$ is a constant that depends on the Lipschitz constant of $w^{\frac1\alpha}$ in an $\hat\eps$-neighborhood of $\hat Q$ and on $\hat\eps$ itself. Therefore it holds
\begin{equation}\label{eq:tmp432}
    \int_{\hat Q\cap E}\abs{w^{\frac1\alpha}(\nabla\varphi)-w^{\frac1\alpha}(x-x_0)}w(x)\de x
    \le C\int_{\hat Q\cap E}\abs{\nabla\varphi - (x-x_0)}w(x) \de x
    \lesssim C \delta_w(E)^{\frac12} \comma
\end{equation}
where in the second step we used \cref{eq:main_proof_inside_nabla}.

Since $w\ge C$ on $\hat Q$, the inequalities \cref{eq:tmp432,eq:tmp117} imply
\begin{equation}\label{eq:tmp581}
    \int_{\hat Q\cap E} \abs{w^{\frac1\alpha}(x)-w^{\frac1\alpha}(x-x_0)} \de x 
    \le C\delta_w(E)^{\frac12} \fullstop
\end{equation}
Finally, notice that thanks to \cref{eq:contains_smaller_ball} and $\hat Q\subseteq B_{\frac12}$, we can replace $\hat Q\cap E$ with $\hat Q$ in \cref{eq:tmp581}.

Hence we can apply \cref{prop:magic_compact_set} and deduce the fundamental bound
\begin{equation*}
    \abs{\pi_{\mathcal E}(x_0)} \le C\delta_w(E)^{\frac12} \comma
\end{equation*}
where $\mathcal E$ is the subspace of directions orthogonal to the constancy directions of $w$.

Thanks to the latter control on $\pi_{\mathcal E}(x_0)$, changing slightly the value of~$x_0$, we can assume that $x_0\in\mathcal C$.

Applying \cref{prop:controlling_constant_directions} with $\xi=x_0$, we know that either \cref{prop:controlling_constant_directions}-\cref{it:linear_growth_ball} or \cref{prop:controlling_constant_directions}-\cref{it:good_slicing} holds.
If \cref{prop:controlling_constant_directions}-\cref{it:linear_growth_ball} holds, since $w(B_1\cap\Sigma) = w(E)$, then \cref{eq:close_to_translated_ball} implies $\abs{x_0} \lesssim C \delta_w(E)^{\frac12}$, that is exactly the desired estimate.
Let us assume that \cref{prop:controlling_constant_directions}-\cref{it:good_slicing} holds. 
For the ease of the reader, let us state again \cref{eq:hessian_control} and \cref{eq:main_proof_inside_nabla}:
\begin{align*}
    &\int_E \abs{\nabla^2\varphi-\id}w \lesssim \delta_w(E)^{\frac12} \comma \\
    &\int_E \abs{\nabla\varphi-(x-x_0)}w(x)\de x \lesssim \delta_w(E)^{\frac12} \fullstop
\end{align*}
From  \cref{eq:contains_smaller_ball} and \cref{prop:controlling_constant_directions}  \cref{it:good_slicing}-\cref{it:locb} and \cref{it:good_slicing}-\cref{it:locf}, we deduce 
\begin{align*}
    &\int_S \de\Haus^{n-1}(s) \int_{\Sigma_s} \abs{\nabla^2\varphi-\id}\de\Haus^1
    \le C\delta_w(E)^{\frac12} \comma \\
    &\int_S \de\Haus^{n-1}(s) \int_{\Sigma_s} \abs{\nabla\varphi - (x-x_0)}\de\Haus^1(x) 
    \le C\delta_w(E)^{\frac12} \fullstop
\end{align*}
Thanks to \cref{it:good_slicing}-\cref{it:locc}, there exists $\overline s\in S$ such that
\begin{align*}
    &\int_{\Sigma_{\overline s}} \abs{\nabla^2\varphi-\id}\de\Haus^1
    \le C\delta_w(E)^{\frac12} \comma \\
    &\int_{\Sigma_{\overline s}} \abs{\nabla\varphi - (x-x_0)}\de\Haus^1(x) 
    \le C\delta_w(E)^{\frac12} \fullstop
\end{align*}
From the latter two inequalities, together with \cref{it:good_slicing}-\cref{it:loca}, it follows that (here it is crucial that $\Sigma_{\overline s}$ is $1$-dimensional)
\begin{equation*}
    \norm{\nabla\varphi - (x-x_0)}_{L^{\infty}(\Sigma_{\overline s})} 
    \le C \delta_w(E)^{\frac12} \fullstop
\end{equation*}
In particular, denoting $z\defeq \overline s + t_0(\overline s)\frac{x_0}{\abs{x_0}}$, it holds
\begin{equation*}
    \abs{\nabla\varphi(z) - (z-x_0)} \le C \delta_w(E)^{\frac12} \fullstop
\end{equation*}
On the other hand, \cref{it:good_slicing}-\cref{it:locd} implies
\begin{equation*}
    \abs{\nabla\varphi(z) - (z-x_0)} \ge
    d\Big(\Sigma, \overline s + (t_0-\abs{x_0})\frac{x_0}{\abs{x_0}}\Big)
    \ge C \abs{x_0} \fullstop
\end{equation*}
The last two estimates together conclude the proof.
\end{proof}

\begin{remark}\label{rem:sharpness_exponent}
    The statement of \cref{thm:main} is sharp with respect to the exponent, i.e., the exponent $\frac12$ present in the right-hand side of \cref{eq:quant-ineq} cannot be increased.
    Let us prove it when $\Sigma$ does not contain lines; the method can be easily adapted to handle the general case.
    
    Given a smooth positive function $r:\partial B_1\cap\Sigma\to\oo0\infty$, let
    \begin{equation*}
        E_{(r)}\defeq \big\{t\theta:\ \theta\in \partial B_1\cap\Sigma,\ 0<t<r(\theta)\big\} \fullstop
    \end{equation*}
    With some standard computations (see also \cref{lem:easy_polar_formulas}) we obtain
    \begin{align*}
        w(E_{(r)}) 
        &= \frac1D\int_{\partial B_1\cap\Sigma}r(\theta)^D\,w(\theta)\de\Haus^{n-1}(\theta) \comma \\
        \Per_w(E_{(r)})
        &= \int_{\partial B_1\cap\Sigma} r(\theta)^{D-1}\sqrt{1+\frac{\abs{\nabla r}^2}{r^2}}\,w(\theta)\de\Haus^{n-1}(\theta) \fullstop
    \end{align*}
    Fix a smooth function $\eta:\partial B_1\cap\Sigma\to\R$ and, for any $\eps>0$, define $E_\eps\defeq E_{(1+\eps\eta)}$.
    If $\int_{\partial B_1\cap\Sigma} \eta\,w\de\Haus^{n-1}=0$, then the previous formulas imply
    \begin{align*}
        w(E_\eps) &= w(B_1\cap\Sigma) + \bigo(\eps^2) \comma \\
        \Per_w(E_\eps) &= \Per_w(B_1\cap\Sigma) + \bigo(\eps^2) \fullstop
    \end{align*}
    Since it holds $A_w(E_\eps)\ge C(\Sigma, w, \eta)\eps$ (with $C>0$ provided $\eta\not\equiv 0$), the family $(E_\eps)_{\eps>0}$ shows the sharpness of the exponent $\frac12$.
\end{remark}

\appendix

\section{Quantitative weighted mean inequality}

The aim of this appendix is to show the following.

\begin{lemma}\label{lem:quantitative_amgm}
    Let $(\lambda_i)_{i=1,\dots, m}$ be positive real numbers with $s\defeq \lambda_1+\cdots+\lambda_m\ge 1$ and let $(x_i)_{i=1,\dots,m}$ be nonnegative real numbers.
    If $\sum \lambda_i x_i \le c s$ for some $c>0$, then it holds
    \begin{equation*}
        \sum_{i=1}^m \lambda_i (x_i-c)^2 \le \frac83\frac{c^{2-s}s^3}{\min\limits_{i=1,\dots,m}\lambda_i^2}\left(c^s-x_1^{\lambda_1}\cdots x_m^{\lambda_m}\right) \fullstop
    \end{equation*}
\end{lemma}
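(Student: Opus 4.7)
The plan is to normalize $c=1$ (by setting $y_i \defeq x_i/c$) and then combine a crude $L^\infty$ bound with a pointwise entropy-type inequality via a case split on $\rho \defeq \prod_i y_i^{\lambda_i}$. After normalization the statement is equivalent to
\[
\sum_{i=1}^m \lambda_i (y_i-1)^2 \le \frac{8 s^3}{3 \min_i \lambda_i^2}\,(1 - \rho)
\]
under $\sum_i \lambda_i y_i \le s$ and $y_i \ge 0$. The constraint forces $y_i \le s/\lambda_i \le M \defeq s/\min_i \lambda_i$, and since $M \ge 1$ (as $s \ge \min_i\lambda_i$) this gives the \emph{uniform bound} $\sum_i \lambda_i(y_i-1)^2 \le s M^2 = s^3/\min_i\lambda_i^2$. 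This already handles the case $\rho \le 5/8$: there $1-\rho \ge 3/8$, and the factor $8/3$ in the statement is exactly $(3/8)^{-1}$.

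For the remaining case $\rho \ge 5/8$ I would use the pointwise inequality
\[
y - \log y - 1 \ge \frac{(y-1)^2}{2 \max(y, 1)} \qquad (y > 0),
\]
which is routine by monotonicity: on $[1,\infty)$ the derivative of the difference LHS$-(y-1)^2/(2y)$ simplifies to $(y-1)^2/(2y^2) \ge 0$, and on $(0, 1]$ the derivative of LHS$-(y-1)^2/2$ simplifies to $-(1-y)^2/y \le 0$, while both differences vanish at $y=1$. Multiplying by $\lambda_i$, summing, and using $\sum_i\lambda_i y_i\le s$ together with $\max(y_i,1)\le M$, I obtain the \emph{entropy bound}
\[
-\log\rho \ge \frac{1}{2M} \sum_i \lambda_i (y_i-1)^2 = \frac{\min_i\lambda_i}{2 s}\sum_i \lambda_i (y_i-1)^2.
\]

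To convert this bound into one in terms of $1-\rho$ on $[5/8,1]$ I would verify the elementary estimate $-\log\rho \le \tfrac{4}{3}(1-\rho)$ there: the function $h(\rho) \defeq \tfrac{4}{3}(1-\rho) + \log\rho$ has its unique critical point at $\rho = 3/4$, satisfies $h(1)=0$, and $h(5/8) = 1/2 - \log(8/5) > 0$ (since $e^{1/2} > 8/5$), so $h \ge 0$ on $[5/8, 1]$. Combining this with the entropy bound and using $\min_i\lambda_i \le s \le s^2$ (which follows from $s \ge 1$) produces
\[
\sum_i \lambda_i(y_i-1)^2 \le \frac{8 s}{3\min_i\lambda_i}(1-\rho) \le \frac{8 s^3}{3\min_i\lambda_i^2}(1-\rho),
\]
closing this second case.

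The main obstacle is the constant-tracking: the split threshold $5/8$ is forced by the uniform regime (since $1-5/8=3/8=(8/3)^{-1}$), and it happens to be exactly the range on which the auxiliary inequality $-\log\rho\le\tfrac{4}{3}(1-\rho)$ holds, with only a slim margin $h(5/8)\approx 0.03$. This alignment is what makes both regimes deliver the same constant $8/3$; once it is found, the rest of the argument is routine.
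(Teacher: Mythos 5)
Your proof is correct, and all the constants check out: the uniform bound $\sum_i\lambda_i(y_i-1)^2\le sM^2=s^3/\min_i\lambda_i^2$ (since $M\ge 1$), the pointwise bound $y-\log y-1\ge \tfrac{(y-1)^2}{2\max(y,1)}$, and the elementary estimate $-\log\rho\le\tfrac43(1-\rho)$ on $[5/8,1]$ (which indeed reduces to $e^{1/2}>8/5$ at the left endpoint) are all verified correctly, and the two regimes fit together with the stated constant $\tfrac83$. Your argument shares the essential skeleton with the paper's: both normalize $c=1$, both multiply an entropy-type pointwise inequality by $\lambda_i$ and sum (your version uses $\max(y,1)$ rather than the paper's $\max(y,1)^2$ in the denominator, which is marginally sharper, though you discard the gain when passing from $s/\min\lambda_i$ to $s^3/\min\lambda_i^2$), and both ultimately rely on the a priori bound $\sum_i\lambda_i(y_i-1)^2\le s^3/\min_i\lambda_i^2$. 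Where you genuinely diverge is the final conversion from $\log\rho$ to $1-\rho$: the paper exponentiates the summed inequality $\log\rho\le -A$ using $e^{-st}\le 1-\tfrac34 t$ for $t\in[0,\tfrac12]$, with the uniform bound playing only the implicit role of guaranteeing that the exponent $A/s$ stays in $[0,\tfrac12]$; you instead case-split on $\rho\lessgtr 5/8$, using the uniform bound alone in the first regime and the tangent-line estimate $-\log\rho\le\tfrac43(1-\rho)$ in the second. Your route is slightly more elementary (no exponentiation of a sum with a side bound on its size), at the cost of an extra case; the paper's is more compressed but requires the reader to check a slightly less transparent side condition.
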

\begin{proof}
    We follow the proof of \cite[Lemma 2.5]{FMP}.
    Without loss of generality we can assume that $c=1$. For any $t>0$ it holds
    \begin{equation*}
        \log(t) \le t - 1 - \frac{(t-1)^2}{2\max(1, t)^2} \fullstop
    \end{equation*}
    Notice that $\max(1, x_i) \le s\lambda^{-1}$, where $\lambda$ is the minimum among $\lambda_1,\dots,\lambda_m$. Therefore it holds
    \begin{align}\begin{split}\label{eq:local_am_gm}
        \log\left(x_1^{\lambda_1}\cdots x_m^{\lambda_m}\right) = 
        \sum_{i=1}^m \lambda_i \log(x_i)
        \le 
        \sum_{i=1}^m \lambda_i\Big(x_i-1-\frac{\lambda^2(x_i-1)^2}{2s^2}\Big)
        \le 
        - \frac{\lambda^2}{2s^2} \sum_{i=1}^m \lambda_i (x_i-1)^2 \fullstop
    \end{split}\end{align}
    Since $s\ge 1$, for any $0\le t \le \frac12$ we have $e^{-st}\le e^{-t}\le 1-\frac34 t$. Moreover, the right-hand side of \cref{eq:local_am_gm} has absolute value below $\frac s2$, thus taking the exponential of both sides we deduce
    \begin{equation*}
        x_1^{\lambda_1}\cdots x_m^{\lambda_m} \le 1-\frac{3\lambda^2}{8s^3}\sum_{i=1}^m\lambda_i(x_i-1)^2 \comma
    \end{equation*}
    that is exactly the desired estimate.
\end{proof}

\section{Concave \texorpdfstring{$1$}{1}-homogeneous functions}

In this appendix we collect some basic facts about concave $1$-homogeneous functions on a cone, as well as a couple of approximation results.

\begin{remark}\label{rem:concave_hom}
    Let $\Sigma\subseteq\R^n$ be a convex cone. Then:
    \begin{itemize}[itemsep=4pt]
        \item If $u, v:\Sigma\to\R$ are $1$-homogeneous concave functions, then so is $\min(u, v)$.
        \item If $T:\R^n\to\R^n$ is a linear isometry and $u:\Sigma\to\R$ is $1$-homogeneous and concave, then so is $u\circ T:T^{-1}(\Sigma)\to\R$.
        \item A function $u:\Sigma\to\R$ is $1$-homogeneous and concave if and only if for every $x\in\Sigma$ there is $\xi_x\in\R^n$ such that $u(x) = \xi_x\cdot x$ and for any $y\in\R^n$ it holds $u(y)\le \xi_x\cdot y$.
    \end{itemize}
\end{remark}

We first prove the following.

\begin{lemma}\label{lem:smoothing_weight}
    Let $\Sigma', \Sigma\subseteq\R^n$ be two open convex cones such that $\Sigma'\cap \partial B_1 \compactsubset \Sigma\cap\partial B_1$. For any concave $1$-homogeneous function $v:\Sigma\to\R$ and any $\eps>0$, there is a concave $1$-homogeneous function $\widetilde v:\Sigma\to\R$ such that $\widetilde v\ge v$ in $\Sigma$, $\widetilde v = v$ on $\partial\Sigma$, $\norm{\widetilde v-v}_{L^\infty(\partial B_1\cap\Sigma')} < \eps$, and $\widetilde v$ is smooth in $\Sigma'$.
\end{lemma}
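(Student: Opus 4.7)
The plan is to approximate $v$ from above by a smooth concave $1$-homogeneous function on $\Sigma'$ and then glue it back to $v$ near $\partial\Sigma$ via a minimum. I will build the smooth approximation by exploiting the supporting-hyperplane representation from \cref{rem:concave_hom}: I pick many subgradients of $v$, take the minimum of the associated linear functions (concave, $1$-homogeneous, and $\ge v$), and replace this minimum by a smooth power mean.

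First, I would reduce to the case $v\ge c_0>0$ on $\overline{\Sigma'\cap \partial B_1}$ by adding a linear $L(y)=\xi_0\cdot y$ to $v$ and subtracting it at the end. Since $L$ is linear, this reduction preserves concavity, $1$-homogeneity, and the boundary equality on $\partial\Sigma$ (because $\widetilde{v+L}-L=v$ on $\partial\Sigma$ when $\widetilde{v+L}=v+L$ there). After the shift, using \cref{rem:concave_hom}, at each point $y_j$ of a sufficiently fine $\delta$-net $\{y_j\}_{j=1}^N\subset \overline{\Sigma'\cap \partial B_1}$ I select a subgradient $\xi_j$ with $\xi_j\cdot y\ge v(y)$ on $\R^n$ and $\xi_j\cdot y_j=v(y_j)$; then $v_N(y):=\min_j \xi_j\cdot y$ is concave and $1$-homogeneous, $v\le v_N$, and $v_N\le v+\epsilon/3$ on $\overline{\Sigma'\cap \partial B_1}$ for $\delta$ small enough.

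For the smoothing I use the power mean with negative exponent: for $p<0$, define
\begin{equation*}
\tilde v_s(y):=\Bigl(\tfrac{1}{N}\sum_{j=1}^N (\xi_j\cdot y)^p\Bigr)^{1/p}.
\end{equation*}
The function $M_p:\R^N_{>0}\to \R$ is classically known to be concave, $1$-homogeneous, monotone in each variable, smooth, and to converge uniformly to the minimum on compact subsets of $\R^N_{>0}$ as $p\to -\infty$. Composing with the linear map $y\mapsto (\xi_1\cdot y,\dots,\xi_N\cdot y)$, I obtain that $\tilde v_s$ is smooth, concave, and $1$-homogeneous on the region where every $\xi_j\cdot y>0$, which includes $\Sigma'$ thanks to the initial shift. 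By the power-mean inequality, $\tilde v_s\ge v_N\ge v$, and for $p$ sufficiently negative I can force $\tilde v_s\le v_N+\epsilon/3\le v+2\epsilon/3$ on $\overline{\Sigma'\cap \partial B_1}$.

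Finally, to arrange the boundary condition, I introduce $\phi(y):=(\epsilon/\delta_0)\,d(y,\partial\Sigma)$, where $\delta_0:=d(\overline{\Sigma'\cap \partial B_1},\,\partial\Sigma\cap \partial B_1)>0$, and set
\begin{equation*}
\widetilde v(y):=\min\bigl(\tilde v_s(y),\; v(y)+\phi(y)\bigr).
\end{equation*}
The distance to the boundary of a convex cone is concave and $1$-homogeneous, vanishes on $\partial\Sigma$, and is $\ge \epsilon$ on $\overline{\Sigma'\cap \partial B_1}$. Hence $\widetilde v$ is concave and $1$-homogeneous, $\widetilde v\ge v$ everywhere, $\widetilde v=v$ on $\partial\Sigma$ (since $\phi=0$ and $\tilde v_s\ge v$ there), while on $\Sigma'$ the strict inequality $v+\phi\ge v+\epsilon>v+2\epsilon/3\ge \tilde v_s$ forces $\widetilde v=\tilde v_s$, giving both the $\epsilon$-closeness on $\partial B_1\cap \Sigma'$ and the required smoothness. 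The main obstacle is the power-mean step: one must verify simultaneously that $M_p$ is concave, $1$-homogeneous, and smooth on $\R^N_{>0}$ (a classical but not entirely trivial fact), and set up the initial shift so that every $\xi_j\cdot y$ stays strictly positive on $\Sigma'$; once these are in place, the remainder is routine verification.
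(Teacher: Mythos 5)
Your approach is genuinely different from the paper's: you smooth by a negative power mean of finitely many supporting linear functionals, whereas the paper smooths by convolving $v$ on $SO(n)$ with a kernel and then glues with an infimum over the whole admissible family $\{h\ \text{concave, $1$-hom},\ h\ge v\ \text{on}\ \Sigma,\ h\ge(\text{smooth piece})\ \text{on}\ \Sigma'\}$. The power-mean idea is appealing because $\tilde v_s\ge v_N\ge v$ comes for free (each $\xi_j\cdot y$ already dominates $v$ on $\Sigma$), something the $SO(n)$-convolution does not give. However, there are two gaps.

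The essential one is the gluing. The function $\tilde v_s=M_p(\xi_1\cdot y,\dots,\xi_N\cdot y)$ is concave, $1$-homogeneous, and smooth only on the subcone $U\defeq\{y:\ \xi_j\cdot y>0\ \forall j\}$, and in general $U\not\supseteq\Sigma$: the $\xi_j$ only satisfy $\xi_j\cdot y\ge v(y)$, and $v$ may well be negative away from $\Sigma'$, even after the linear shift (which only makes $v$ positive on $\overline{\Sigma'\cap\partial B_1}$, not on all of $\Sigma$). On $\Sigma\setminus U$ the expression $\min(\tilde v_s,v+\phi)$ is not defined, and the natural convention $\widetilde v=v+\phi$ there does not yield a concave function on $\Sigma$: a supporting hyperplane of $\tilde v_s$ at a point of $\Sigma'$ need not lie above $v+\phi$ on $\Sigma\setminus U$ (indeed $\nabla\tilde v_s$ is a conical, not convex, combination of the $\xi_j$, with total weight $\ge 1$, so $\nabla\tilde v_s\cdot y\ge v(y)$ fails where $v<0$). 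The paper sidesteps this precisely by defining $\widetilde v$ as the pointwise infimum over all concave $1$-homogeneous $h\ge v$ on $\Sigma$ with $h\ge(\text{smooth piece})$ on $\Sigma'$, which is automatically concave and well-defined on all of $\Sigma$. Your assertion that ``$\tilde v_s\ge v$ there'' on $\partial\Sigma$ is also unjustified, since $\tilde v_s$ may simply not be defined there.

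A secondary gap is in the reduction. The existence of a linear $L$ with $v+L>c_0$ on $K\defeq\overline{\Sigma'\cap\partial B_1}$ requires $K$ to contain no antipodal pair $\pm e$, since concavity and $1$-homogeneity force $v(e)+v(-e)\le 0$. This is equivalent to $\Sigma'$ containing no line, which the hypothesis $\Sigma'\cap\partial B_1\compactsubset\Sigma\cap\partial B_1$ does not guarantee in general (think $\Sigma=\R^n$). It does hold for the applications in this paper, where $\overline\Sigma$ has a pointed section, but the lemma is stated without that restriction.

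Note that in the setting in which the lemma is actually invoked, $v=w^{1/\alpha}\ge 0$ and $v>0$ in $\Sigma$, so $\xi_j\cdot y\ge v(y)>0$ on $\Sigma$, whence $\Sigma\subseteq U$; in that case no reduction is needed and $\min(\tilde v_s,v+\phi)$ is a bona fide minimum of two concave functions on $\Sigma$, so your construction goes through. For the lemma as stated, though, you should replace the final minimum by the infimum over the admissible family (as the paper does), and verify that the smooth piece is attained on $\Sigma'$; and you should either restrict to the pointed case or handle possible lines in $\Sigma'$ separately.
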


\begin{proof}
    Fix an open convex cone $\Sigma''\subseteq\R^n$ such that $\Sigma'\cap\partial B_1 \compactsubset \Sigma''\cap\partial B_1 \compactsubset \Sigma\cap\partial B_1$. During the proof we will require some further properties on the cone $\Sigma''$.

    To regularize a concave $1$-homogeneous function we exploit the convolution with respect to the Haar measure (see \cite[2.7]{Federer69}) on $SO(n)$. Let $\mu\in\prob(SO(n))$ be the Haar measure and let $\rho:SO(n)\to\R$ be a smooth kernel, that is $\int \rho\de\mu = 1$, $\rho\ge 0$ and $\rho$ is supported in a small neighborhood of the identity. 
    Let us define
    \begin{equation*}
        v''(x) \defeq \int_{SO(n)} v(T(x)) \rho(T) \de\mu(T) \fullstop
    \end{equation*}
    If $\rho$ is supported in a sufficiently small region, then $v''$ is well-defined in $\Sigma''$ and smooth in it. Moreover it is concave and $1$-homogeneous thanks to \cref{rem:concave_hom}. Choosing appropriately the kernel $\rho$, it is also true that $\norm{v''-v}_{L^\infty(\partial B_1\cap\Sigma'')} < \eps/8$.
    
    As observed in \cref{rem:concave_hom}, for any $x\in\Sigma''$ there is $\xi_x\in\R^n$ such that $v''(x) = \xi_x\cdot x$ and $v''(y)\le \xi_x\cdot y$ for any $y\in\Sigma''$. Let us define
    \begin{equation*}
        v'(y) = \min_{x\in\partial B_1\cap\Sigma'} \xi_x\cdot y \fullstop
    \end{equation*}
    It holds $v'=v''$ in $\Sigma'$ and clearly $v'$ is concave and $1$-homogeneous in $\Sigma$ (but could, a priori, take the value $-\infty$). Let us show that $v'$ is \emph{almost} above $v$ in $\Sigma$. It holds $v'\ge v''$ in $\Sigma''$ and $v''(x)\ge v(x)-\frac{\eps}2\abs{x}$ for any $x\in\Sigma''$, thus $v'(x)\ge v(x) - \frac{\eps}2\abs{x}$ in $\Sigma''$.
    Fix $x\in\partial B_1\cap\Sigma'$ and $z\in\Sigma\setminus\Sigma''$ with $\abs{z}=1$. Up to choosing $\Sigma''$ appropriately, we can assume that there is $0<\lambda<\frac12$ such that $y\defeq \lambda x + (1-\lambda) z\in\Sigma''$. From the properties of $\xi_x$ and the concavity of $v$, it follows
    \begin{align*}
        v(z) &\le \frac{v(y)-\lambda v(x)}{1-\lambda} \le \frac{v''(y) + \frac{\eps}8\abs{y}-\lambda v''(x) + \lambda\frac{\eps}8\abs{x}}{1-\lambda} \le 
        \frac{\xi_x\cdot y + \frac{\eps}8\abs{y} -\lambda\xi_x\cdot x + \lambda\frac{\eps}8\abs{x}}{1-\lambda} \\
        &=
        \xi_x\cdot z + \frac{\eps}8 \frac{\abs{y} + \lambda\abs{x}}{1-\lambda}
        \le \xi_x\cdot z + \frac{\eps}2 \fullstop
    \end{align*}
    Hence, by definition of $v'$, it holds $v(z)\le v'(z) + \frac{\eps}2\abs{z}$ for any $z\in \Sigma\setminus\Sigma''$ and, since we have already established the same inequality in $\Sigma''$, we deduce $v(x)\le v'(x) + \frac{\eps}2\abs{x}$ for any $x\in\Sigma$.
    
    The function $x\mapsto v'(x) + \frac{\eps}2\abs{x}$ satisfies all the requirements of the statement apart from $\widetilde v = v$ on $\partial\Sigma$. To conclude let $\widetilde v:\Sigma\to\R$ be the minimum of all $1$-homogeneous and concave functions $h:\Sigma\to\R$ such that $h\ge v$ in $\Sigma$ and $h\ge v'(x) + \frac{\eps}2\abs{x}$ in $\Sigma'$. With this final step we obtain $\widetilde v = v$ on $\partial\Sigma$ and we do not lose any of the other properties.
\end{proof}

We will also need the following.

\begin{lemma}\label{lem:weight_approximation_zero_trace}
    Let $\Sigma', \Sigma\subseteq\R^n$ be two convex cones such that $\Sigma'\cap \partial B_1 \compactsubset \Sigma\cap\partial B_1$. For any nonnegative concave $1$-homogeneous function $v:\Sigma\to\co0\infty$ and any $\eps>0$, there is a concave $1$-homogeneous function $\widetilde v:\Sigma\to\R$ such that $\widetilde v = 0$ on $\partial\Sigma$ and $\widetilde v = v$ in $\Sigma'$.
\end{lemma}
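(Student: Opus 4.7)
The plan is to construct $\widetilde v$ on $\overline{\Sigma}$ as the pointwise minimum of two concave $1$-homogeneous functions: an upper envelope of $v$ built from supporting hyperplanes at points of $\Sigma'$, and a large enough multiple of the distance to $\partial\Sigma$. The minimum of two concave $1$-homogeneous functions is concave and $1$-homogeneous by \cref{rem:concave_hom}, so the output is automatically admissible.

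First I would build the linear upper envelope. By the third item of \cref{rem:concave_hom}, for every $x \in \Sigma' \cap \partial B_1$ there exists $\xi_x \in \R^n$ such that $v(x) = \xi_x \cdot x$ and $v(y) \le \xi_x \cdot y$ for every $y \in \Sigma$. Since concave functions are locally Lipschitz in the interior of their domain and $\Sigma' \cap \partial B_1 \compactsubset \Sigma$, we obtain a uniform bound $|\xi_x| \le L$. Setting
\begin{equation*}
    v^\star(y) \defeq \inf_{x \in \Sigma' \cap \partial B_1} \xi_x \cdot y,
\end{equation*}
we get a concave, $1$-homogeneous, everywhere finite function on $\R^n$ (since $\xi_x\cdot y \ge -L|y|$), with $v^\star \ge v$ on $\Sigma$ by construction. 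Conversely, for $y \in \Sigma'$ the test point $x = y/|y|$ gives $\xi_{y/|y|}\cdot y = |y|\,v(y/|y|) = v(y)$ by $1$-homogeneity, so $v^\star(y)\le v(y)$; combined with the reverse inequality, $v^\star = v$ on $\Sigma'$. In particular $v^\star \ge 0$ on $\overline{\Sigma}$.

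Next I would introduce the distance function $d(y) \defeq \operatorname{dist}(y, \R^n \setminus \Sigma)$ for $y \in \overline{\Sigma}$: as the distance to the complement of a convex set it is concave on $\overline{\Sigma}$, and it is clearly $1$-homogeneous, nonnegative, and vanishes exactly on $\partial\Sigma$. The assumption $\Sigma' \cap \partial B_1 \compactsubset \Sigma \cap \partial B_1$ furnishes a constant $c_0 > 0$ with $d(y) \ge c_0\,|y|$ for every $y \in \Sigma'$. Choose $K \ge c_0^{-1}\sup\{v^\star(x) : x \in \Sigma' \cap \partial B_1\}$ and set
\begin{equation*}
    \widetilde v(y) \defeq \min\bigl(v^\star(y),\, K\,d(y)\bigr), \qquad y \in \overline{\Sigma}.
\end{equation*}
On $\Sigma'$ the choice of $K$ ensures $K\,d(y) \ge v^\star(y)$, so $\widetilde v = v^\star = v$ there. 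On $\partial\Sigma$ we have $d = 0$ while $v^\star \ge 0$, hence $\widetilde v = \min(v^\star, 0) = 0$.

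The only (mild) obstacle is establishing finiteness of $v^\star$ on all of $\R^n$; this reduces to the uniform Lipschitz bound on $v$ near the compact set $\Sigma' \cap \partial B_1$, which is classical for concave functions on open convex sets. Note that the parameter $\eps$ appearing in the hypothesis plays no role in the conclusion: the construction above is exact and requires no approximation step.
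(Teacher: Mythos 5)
Your proof is correct, and it takes a genuinely different and more constructive route than the paper's. The paper simply defines $\widetilde v$ as the pointwise infimum of \emph{all} concave $1$-homogeneous functions $h:\Sigma\to\R$ with $h\ge 0$ in $\Sigma$ and $h\ge v$ in $\Sigma'$, invokes \cref{rem:concave_hom} for concavity and homogeneity of the infimum, and then asserts (without detail) that this $\widetilde v$ agrees with $v$ on $\Sigma'$ and vanishes on $\partial\Sigma$. Your argument instead exhibits an explicit admissible competitor, $\min(v^\star, K\,d(\cdot,\R^n\setminus\Sigma))$, where $v^\star$ is the envelope of supporting linear functionals at points of $\Sigma'\cap\partial B_1$ and $K$ is a large constant fixed by the compact inclusion hypothesis. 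This is arguably more transparent: in particular, the vanishing on $\partial\Sigma$ and the fact that the infimum in the paper's definition is finite (which requires producing at least one admissible $h$ that vanishes on $\partial\Sigma$) both become evident, whereas the paper leaves these checks to the reader. Your observation that the parameter $\eps$ in the statement is vacuous is also correct; it is likely a leftover from the statement of \cref{lem:smoothing_weight}.
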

\begin{proof}
    Let $\widetilde v:\Sigma\to\co0\infty$ be the infimum of all the concave $1$-homogeneous functions $h:\Sigma\to\R$ such that $h\ge 0$ in $\Sigma$ and $h\ge \widetilde v$ in $\Sigma'$. Thanks to the observation of \cref{rem:concave_hom}, the function $\widetilde v$ is concave and $1$-homogeneous. It is straightforward to check that $\widetilde v = v$ in $\Sigma'$ and $\widetilde v = 0$ on the boundary of $\Sigma$.
\end{proof}

For completeness, let us conclude observing that a $1$-homogeneous function $v$ is concave if and only if its restriction on the sphere is \emph{pseudo-concave} (either in the pointwise or differential sense).

\begin{lemma}
    Let $\Sigma\subseteq\R^n$ be a convex cone and let $v:\Sigma\to\R$ be a $1$-homogeneous function. The following statements are equivalent.
    \begin{enumerate}[ref=(\arabic*)]
        \item \label{it:appb_concavity} The function $v$ is concave.
        \item \label{it:appb_pointwise_inequality} For any unit-speed geodesic on the sphere $\gamma:\cc{-s}{t}\to\partial B_1\cap\Sigma$ (with $s,t\ge 0$), it holds
        \begin{equation*}
            v(\gamma(0)) \ge \frac{\sin(t)}{\sin(s+t)}v(\gamma(-s)) + \frac{\sin(s)}{\sin(s+t)}v(\gamma(t)) \fullstop
        \end{equation*}
        \item \label{it:appb_differential_inequality} The function $v$ is twice-differentiable almost everywhere and, for any unit-speed geodesic on the sphere $\gamma:\oo{-\eps}{\eps}\to\partial B_1\cap\Sigma$, if $v$ is twice differentiable at $\gamma(0)$ then it holds
        \begin{equation*}
            \frac{\de^2}{\de t^2}v(\gamma(0)) + v(\gamma(0)) \le 0 \fullstop
        \end{equation*}
    \end{enumerate}
\end{lemma}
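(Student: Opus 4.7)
The plan is to prove the implications $(1) \Leftrightarrow (2)$, $(1) \Rightarrow (3)$, and $(3) \Rightarrow (2)$, from which the three-way equivalence follows. The unifying observation is that a $1$-homogeneous function on a cone is fully determined by its restriction to the unit sphere, so concavity on the cone reduces to an intrinsic condition on $v|_{\partial B_1 \cap \Sigma}$.

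\textbf{Step 1: $(1) \Leftrightarrow (2)$.} The first key ingredient is that a $1$-homogeneous concave function $v$ is automatically super-additive in the weighted sense: $v(\alpha x + \beta y) \ge \alpha v(x) + \beta v(y)$ for every $\alpha, \beta \ge 0$ and $x, y \in \Sigma$. Indeed, setting $\lambda = \alpha/(\alpha+\beta)$, one has $v(\alpha x + \beta y) = (\alpha + \beta)\, v(\lambda x + (1-\lambda) y) \ge (\alpha + \beta)(\lambda v(x) + (1-\lambda) v(y))$. The second ingredient is an elementary trigonometric identity: for a unit-speed spherical geodesic $\gamma:[-s,t]\to\partial B_1 \cap \Sigma$, working in the $2$-plane spanned by $\gamma(-s), \gamma(t)$ one obtains
\[
\gamma(0) = \frac{\sin t}{\sin(s+t)}\, \gamma(-s) + \frac{\sin s}{\sin(s+t)}\, \gamma(t),
\]
with both coefficients nonnegative. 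Combining these gives $(1) \Rightarrow (2)$. Conversely, given $x, y \in \Sigma$ and $\lambda \in [0,1]$, set $z = \lambda x + (1-\lambda) y$; then $z/|z|$ lies on the spherical geodesic joining $x/|x|$ and $y/|y|$, and choosing $s, t$ so that $\gamma(0) = z/|z|$ gives, via the same identity and the $1$-homogeneity of $v$, the concavity inequality $v(z) \ge \lambda v(x) + (1-\lambda) v(y)$.

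\textbf{Step 2: $(1) \Rightarrow (3)$.} A concave function on the open cone is locally Lipschitz in its interior, so Alexandrov's theorem yields twice-differentiability a.e., with $\nabla^2 v(p) \le 0$ at such a point $p = \gamma(0)$. Since $\gamma$ is a great circle, $\gamma''(0) = -p$, so $\gamma(t) - p = \gamma'(0) t - \tfrac{1}{2} p\, t^2 + o(t^2)$. Taylor-expanding and using the Euler identity $\nabla v(p) \cdot p = v(p)$ (valid at any point of differentiability of a $1$-homogeneous function) gives
\[
v(\gamma(t)) = v(p) + \nabla v(p)\cdot \gamma'(0)\, t + \tfrac{1}{2}\bigl(-v(p) + \gamma'(0)^T \nabla^2 v(p)\, \gamma'(0)\bigr) t^2 + o(t^2),
\]
whence $\tfrac{d^2}{dt^2} v(\gamma(0)) + v(p) = \gamma'(0)^T \nabla^2 v(p)\, \gamma'(0) \le 0$.

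\textbf{Step 3: $(3) \Rightarrow (2)$.} Given a geodesic $\gamma:[-s,t]\to\partial B_1 \cap \Sigma$, set $f(\tau) \defeq v(\gamma(\tau))$, so (3) asserts $f'' + f \le 0$ at a.e.\ $\tau$. Let $u$ be the solution of $u'' + u = 0$ on $[-s, t]$ with $u(-s) = f(-s)$, $u(t) = f(t)$; an explicit computation gives $u(0) = \frac{\sin t}{\sin(s+t)} f(-s) + \frac{\sin s}{\sin(s+t)} f(t)$. Since the interval length $s+t$ is strictly less than $\pi$, the Green's function for $D^2 + 1$ with Dirichlet boundary conditions is nonnegative, so $(f-u)(0) \ge 0$, which is precisely (2).

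\textbf{Main obstacle.} The delicate point is Step 3: the inequality $f'' + f \le 0$ is given only pointwise a.e., while the Green's function argument needs it distributionally (or in the viscosity sense). The cleanest way to bypass this is to strengthen Step 2 to a direct proof of $(3) \Rightarrow (1)$: at a.e.\ point $p \in \partial B_1 \cap \Sigma$ where $v$ is twice differentiable, the bilinear form $\xi \mapsto \xi^T \nabla^2 v(p) \xi$ vanishes on the radial direction (by differentiating Euler's identity) and is $\le 0$ on every tangent direction to the sphere (by (3) applied to all geodesics through $p$), hence $\nabla^2 v(p) \le 0$. Combined with the implicit continuity of $v$ (which one interprets as part of the regularity in (3), as twice a.e.\ differentiability only makes pointwise sense for functions that are at least locally Lipschitz), a standard mollification yields the global concavity of $v$.
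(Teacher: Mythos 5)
Your treatment of $(1) \Leftrightarrow (2)$ is correct and takes a slightly different route from the paper. You go through super-additivity of $1$-homogeneous concave functions and the spherical identity $\gamma(0)=\tfrac{\sin t}{\sin(s+t)}\gamma(-s)+\tfrac{\sin s}{\sin(s+t)}\gamma(t)$, both for $(1)\Rightarrow(2)$ and the converse. The paper instead proves $(1)\Rightarrow(2)$ by observing that the class of functions satisfying $(2)$ is stable under $\min$ and contains linear functions, and that a $1$-homogeneous concave function is the infimum of its supporting linear functions; for $(2)\Rightarrow(1)$ it compares $v$ against a linear interpolant $L$ through $p,q$. Both routes are legitimate and essentially the same amount of work; the two arguments are morally dual to each other. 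Your $(1)\Rightarrow(3)$ matches the paper's computation using $\ddot\gamma=-\gamma$ and Euler's identity, and is correct.

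The weak point is exactly where you flag it, and unfortunately your proposed repair does not close the gap. For $(3)\Rightarrow(2)$, the Green's function comparison needs $f''+f\le 0$ in the distributional (or viscosity) sense, not just at a.e.\ pointwise Peano point. For your alternative $(3)\Rightarrow(1)$ route, the step ``$\nabla^2 v\le 0$ a.e.\ + mollification $\Rightarrow$ concave'' is simply false without extra regularity: take $u(t)=\int_0^t C(s)\,\de s - t$ where $C$ is the Cantor--Lebesgue function; then $u''=0$ a.e.\ (so the a.e.\ pointwise inequality is satisfied since one also has $u\le 0$), $u$ is twice differentiable a.e., yet $u$ is convex and non-affine, so building a $1$-homogeneous extension of $u$ along an arc violates $(2)$. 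Thus statement $(3)$, read literally as a pointwise a.e.\ condition, is strictly weaker than $(1)$ and $(2)$. To make the equivalence hold one must either restrict to $v\in C^{1,1}_{\mathrm{loc}}$ (or more generally to functions whose distributional Hessian is a measure with absolutely continuous negative part), or interpret the inequality in $(3)$ in the viscosity sense. To be fair, the paper's own sketch has the same blind spot: it only verifies the pointwise identity relating $\tfrac{\de^2}{\de t^2}v(\gamma)$ to $\nabla^2 v[\dot\gamma,\dot\gamma]$ and declares $(1)$ and $(3)$ equivalent, without addressing the converse direction or the pointwise-vs-distributional issue at all.
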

\begin{proof}
    Since we do not use this result and the proof is standard, we give only a sketch of the proof.
    
    The statements \cref{it:appb_concavity} and \cref{it:appb_differential_inequality} are equivalent because of the following identity
    \begin{equation*}
        \frac{\de^2}{\de t^2}v(\gamma(t)) = \nabla^2 v(\gamma)[\dot\gamma,\dot\gamma] + \nabla v(\gamma)\cdot\ddot\gamma = 
        \nabla^2 v(\gamma)[\dot\gamma,\dot\gamma] - v(\gamma) \comma
    \end{equation*}
    where in the last step we used that $v$ is $1$-homogeneous.
    
    If $v_1$ and $v_2$ satisfy \cref{it:appb_pointwise_inequality}, then also $\min(v_1, v_2)$ does. Therefore, since linear functions satisfy \cref{it:appb_pointwise_inequality} (with equality), any $1$-homogeneous concave function satisfies \cref{it:appb_pointwise_inequality}. It remains only to prove that \cref{it:appb_pointwise_inequality} implies \cref{it:appb_concavity}. Given two points $p,q\in\Sigma$, let $L:\R^n\to\R$ be a linear map such that $L(p)=v(p)$ and $L(q)=v(q)$.
    Given that $L$ satisfies the equality in \cref{it:appb_pointwise_inequality}, for any $\lambda,\mu\ge 0$ with $\lambda+\mu=1$, it holds
    \begin{equation*}
        \frac{v(\lambda p +\mu q)}{\abs{\lambda p + \mu q}}
        =
        v\Big(\frac{\lambda p +\mu q}{\abs{\lambda p + \mu q}}\Big)
        \ge
        L\Big(\frac{\lambda p +\mu q}{\abs{\lambda p + \mu q}}\Big)
        =
        \frac{\lambda v(p) +\mu v(q)}{\abs{\lambda p + \mu q}}
    \end{equation*}
    which is the sought concavity of $v$.
\end{proof}

\section{Indecomposable sets in \texorpdfstring{$\R^2$}{the plane} are approximated by connected sets}
In this appendix we show that, in $\R^2$, an \emph{indecomposable} set of finite perimeter can be approximated by \emph{connected} smooth open sets (we prove an analogous result also in the weighted setting). Notice that, in higher dimension, any set of finite perimeter can be approximated by smooth connected open sets, while in $\R^2$ this is false.
As a fundamental technical tool, we exploit the theory devised in \cite{ACMM2001}; the interested reader shall refer to that paper for a thorough study of indecomposable sets of finite perimeter.

\begin{definition}\label{def:indecomposable}
    A set of finite perimeter $E\subseteq \R^2$ is indecomposable if it cannot be written as $E=E_1\cup E_2$ with $E_1, E_2$ disjoint nonneglegibile sets of finite perimeter such that
    $\Per(E) = \Per(E_1)+\Per(E_2)$.
\end{definition}

\begin{proposition}\label{prop:indecomposable}
    Let $E\subseteq\R^2$ be an indecomposable set of finite perimeter with $\abs{E}<\infty$. Then, there is a sequence $(\Omega_i)_{i\in\N}$ of bounded connected open sets with smooth boundary such that $\abs{\Omega_i\triangle E}\to 0$ and $\Per(\Omega_i)\to\Per(E)$ as $i\to\infty$.
\end{proposition}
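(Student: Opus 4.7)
The plan is to invoke the structural theory of indecomposable planar sets of finite perimeter developed by Ambrosio, Caselles, Masnou, and Morel in \cite{ACMM2001}. In that paper, for any indecomposable $E\subseteq\R^2$ of finite perimeter one obtains the canonical decomposition
\[
    E = \operatorname{sat}(E) \setminus \bigcup_{k=1}^{\infty} H_k
\]
(up to a Lebesgue-negligible set), where $\operatorname{sat}(E)$ is the \emph{saturation} of $E$ (the complement of the unbounded indecomposable component of $\R^2 \setminus E$) and the $H_k$ are the ``holes'', i.e., the indecomposable components of $\operatorname{sat}(E)\setminus E$. Each of $\operatorname{sat}(E)$ and $H_k$ is \emph{simple} (its essential boundary is equivalent to a single rectifiable Jordan curve), the different pieces are pairwise essentially disjoint, and the perimeter is additive:
\[
    \Per(E) = \Per(\operatorname{sat}(E)) + \sum_{k=1}^{\infty}\Per(H_k) < \infty \fullstop
\]
A preliminary reduction (intersecting $E$ with a large ball $B_R$ and using the slicing formula $\Per(E\cap B_R)=\Per(E,B_R)+\mathcal H^1(E^{(1)}\cap\partial B_R)$, which is valid for a.e.\ $R$, to check that the errors vanish as $R\to\infty$) allows us to assume that $E$, and hence $\operatorname{sat}(E)$, is bounded.

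The first step is to truncate the family of holes to a finite one. Given $\eps>0$, pick $N=N(\eps)$ so large that $\sum_{k>N}\Per(H_k)<\eps$ and $\sum_{k>N}\abs{H_k}<\eps$; the second estimate follows from the planar isoperimetric inequality $\abs{H_k}\lesssim\Per(H_k)^2$ together with the summability of $\Per(H_k)$. The set
\[
    E_N \defeq \operatorname{sat}(E)\setminus \bigcup_{k=1}^{N} H_k
\]
then satisfies $\abs{E_N\triangle E}<\eps$ and $\abs{\Per(E_N)-\Per(E)}<\eps$ by the additivity recalled above.

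The second step is to turn $E_N$ into a smooth connected open set. Since $\operatorname{sat}(E),H_1,\dots,H_N$ are pairwise essentially disjoint simple sets lying inside $\operatorname{sat}(E)$, their essential boundaries are disjoint rectifiable Jordan curves whose $\mathcal H^1$-supports can be enclosed in pairwise disjoint closed tubular neighborhoods. Applying the standard mollification approximation \cite[Theorem 13.8]{maggi2012} separately to $\operatorname{sat}(E)$ and to each $H_k$, at a convolution scale much finer than the width of these tubular neighborhoods, yields bounded smooth open sets $F$ and $D_1,\dots,D_N$ with smooth Jordan-curve boundaries such that $\overline{D_1},\dots,\overline{D_N}$ are pairwise disjoint and compactly contained in $F$, with individual area and perimeter errors below $\eps/(N+1)$. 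Defining
\[
    \Omega_{N,\eps} \defeq F\setminus\bigcup_{k=1}^{N}\overline{D_k}\comma
\]
one obtains a bounded smooth open set whose topology is that of an open disk with $N$ disjoint open disks removed, hence \emph{connected}, and which satisfies $\abs{\Omega_{N,\eps}\triangle E}\lesssim\eps$ and $\abs{\Per(\Omega_{N,\eps})-\Per(E)}\lesssim\eps$. A diagonal extraction along $\eps_i\to 0$ then delivers the required sequence.

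The main obstacle I expect is the geometric separation step in the second part: ensuring that the independent mollifications of the various pieces can be simultaneously arranged to be cleanly nested and pairwise disjoint. This is a routine but slightly delicate exercise in smoothing sets of finite perimeter, and it hinges on the pairwise essential disjointness of the Jordan curves provided by \cite{ACMM2001}, on the locality of the convolution kernel, and on a transversal level-set choice in the mollified indicators.
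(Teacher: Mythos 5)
Your first step coincides with the paper's: invoke the Jordan‐curve decomposition of \cite{ACMM2001} (the paper cites Corollary~1 of that reference directly, so the object is really $E=\inside(\gamma)\setminus\bigcup_i\overline{\inside(\gamma_i)}$ with $\Per(E)=\Haus^1(\gamma)+\sum_i\Haus^1(\gamma_i)$), and then truncate the countable family of holes to a finite one; both estimates in that step are fine. The divergence, and the genuine gap, is in the smoothing step.

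You claim that $\partial^*\!\operatorname{sat}(E),\partial^*H_1,\dots,\partial^*H_N$ are \emph{disjoint} Jordan curves whose supports can be enclosed in pairwise disjoint tubular neighbourhoods, and then mollify each piece independently and subtract. But the decomposition of \cite{ACMM2001} only guarantees that the Jordan curves are \emph{essentially} disjoint (pairwise intersections have zero $\Haus^1$-measure) and pairwise nested or exterior — it does not forbid them from touching as point sets. A prototypical indecomposable example is the unit disk with a smaller disk removed that is internally tangent to the boundary: the outer curve $\gamma$ and the hole curve $\gamma_1$ share a point, so no disjoint tubular neighbourhoods exist. Independent mollification of $\operatorname{sat}(E)$ and $H_1$ then produces sets $F$ and $D_1$ that can overlap near the tangency or, even when $D_1\subseteq F$, with $\overline{D_1}$ meeting $\partial F$, and in either case $F\setminus\overline{D_1}$ may disconnect. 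So the step you flag as a ``routine but slightly delicate exercise'' is in fact obstructed: the hypothesis it needs (curves at positive mutual distance) is not available.

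The paper sidesteps this entirely. After the finite truncation (which, as in your draft, produces an open connected $E'$), the paper does \emph{not} try to smooth the outer boundary and the holes separately. Instead it mollifies the characteristic function of the whole set $E'$ at scale $1/i$, takes a generic superlevel set $\Omega_i$ (\cite[Theorem 13.8]{maggi2012}), and then passes to the connected component $\widetilde\Omega_{i_k}$ of $\Omega_{i_k}$ that contains the large inner piece $\widetilde E_k\defeq$ (largest connected component of $\{x\in E':\,d(x,E'^{\mathsf c})>1/k\}$). Connectedness is then automatic, the containments $\widetilde E_k\subseteq\widetilde\Omega_{i_k}\subseteq\Omega_{i_k}$ give the $L^1$ convergence by squeezing, and since dropping components can only decrease the perimeter, $\Per(\widetilde\Omega_{i_k})\le\Per(\Omega_{i_k})\to\Per(E')$, while lower semicontinuity gives the matching lower bound. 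This works uniformly, including the tangency configurations that defeat your construction. If you want to keep your piece-by-piece mollification, you would need a preliminary perturbation that pushes the Jordan curves strictly apart while controlling area and perimeter — a nontrivial extra lemma — whereas the paper's global mollification plus component selection avoids the issue altogether.
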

\begin{proof}
    Given a Jordan curve $\gamma:\S^1\to\R^2$, we denote with $\inside(\gamma)$ the bounded connected component of $\R^2\setminus\gamma$.

    Thanks to \cite[Corollary 1]{ACMM2001}, there are $\gamma, (\gamma_i)_{i\in I}$ Jordan curves (with $I$ at most countable) such that (up to negligible sets)
    \begin{equation*}
        E = \inside(\gamma)\setminus \bigcup_{i\in I} \overline{\inside(\gamma_i)} \,, 
    \end{equation*}
    and $\Per(E) = \Haus^1(\gamma) + \sum_{i\in I} \Haus^1(\gamma_i)$. Moreover $\inside(\gamma_i)\subseteq \inside(\gamma)$ for any $i\in I$ and $\inside(\gamma_i)\cap\inside(\gamma_j)=\emptyset$ for any $i\not= j$.
    
    Thus, for any $\eps>0$, we can find a \emph{finite} subset $I'\subseteq I$ such that $\abs{E'\triangle E} < \eps$ and $\abs{\Per(E')-\Per(E)} < \eps$, where 
    \begin{equation*}
        E' \defeq \inside(\gamma)\setminus \bigcup_{i\in I'} \overline{\inside(\gamma_i)} \,.
    \end{equation*}
    Notice that $E'$ is an open connected set of finite perimeter (the connectedness follows from the indecomposability of $E$).
    Since $\eps>0$ can be chosen arbitrarily, we can directly assume that $E$ is open and connected.
    
    Let us now prove the statement for a connected open set $E$ with finite perimeter. 
    For any $k\in\N$, consider the sequence of open sets $(E_k)_{k\in\N}\subseteq\R^n$ defined as
    \begin{equation*}
        E_k\defeq \big\{x\in E:\ d(x, E^\complement) > \frac1k\big\} \,.
    \end{equation*}
    Let $\widetilde E_k$ be the connected component of $E_k$ with the largest measure. Since $E$ is open and $(E_k)_{k\in\N}$ is an increasing chain, it is not hard to check that $\abs{\widetilde E_k\triangle E}\to 0$ as $k\to\infty$.
    Now, let $(\Omega_i)_{i\in\N}$ be a sequence of smooth open sets obtained taking a superlevel set of a convolution of $\chi_E$, more precisely
    \begin{equation*}
        \Omega_i \defeq \Big\{x\in\R^2: \chi_E\ast \big(i^2\eta(i\emptyparam)\big)(x) > t_i\Big\}\,,
    \end{equation*}
    where $\eta:\R^2\to[0, 1]$ is a smooth kernel ($\int\eta=1$) and $(t_i)_{i\in\N}$ is a sequence of values $0 < t_i < 1$ such that the resulting $\Omega_i$ are smooth and $\abs{\Omega_i\triangle E}\to 0$ and $\Per(\Omega_i)\to\Per(E)$.
    This approximation with smooth open sets is very standard, see \cite[Theorem 13.8 ]{maggi2012} for the details.
    
    Given $k\in\N$, it follows from the definition of $\Omega_i$ that, for any sufficiently large $i\in\N$, it holds $E_k\subseteq\Omega_i$. Choose an increasing sequence of indices $(i_k)_{k\in\N}$ such that $E_k\subseteq\Omega_{i_k}$ and
    let $\widetilde\Omega_{i_k}$ be the connected component of $\Omega_{i_k}$ that contains $\widetilde E_k$. Notice that, for any $k\in\N$, it holds
    \begin{equation*}
        \widetilde E_k \subseteq \widetilde\Omega_{i_k} \subseteq \Omega_{i_k} \,,
    \end{equation*}
    hence, since $\abs{\widetilde E_k\triangle E}\to 0$ and $\abs{\Omega_{i_k}\triangle E}\to 0$, we have $\abs{\widetilde \Omega_{i_k}\triangle E}\to 0$.
    Moreover $\Per(\widetilde\Omega_{i_k})\le\Per(\Omega_{i_k})\to\Per(E)$, hence the sequence $(
    \widetilde\Omega_{i_k})_{k\in\N}$ satisfies all the requirements of the statement.
\end{proof}

Let us now give the definition of indecomposable set in the weighted setting and prove a proposition analogous to the latter one.

\begin{definition}\label{def:w_indecomposable}
    Let $\Sigma\subseteq\R^2$ and $w:\Sigma\to[0,\infty)$ be as in \cref{eq:assumptions}.
    A set of finite $w$-perimeter $E\subseteq \Sigma$ is $w$-indecomposable if it cannot be written as $E=E_1\cup E_2$ with $E_1, E_2$ disjoint nonneglegibile sets of finite $w$-perimeter such that
    $\Per_w(E) = \Per_w(E_1)+\Per_w(E_2)$.
\end{definition}
\begin{remark}
    For a set $E\subseteq\Sigma$ such that $\max(\abs{E}, w(E), \Per(E), \Per_w(E))<\infty$, being indecomposable is equivalent to being $w$-indecomposable.
\end{remark}

\begin{proposition}\label{prop:w_indecomposable}
    Let $\Sigma\subseteq\R^2$ and $w:\Sigma\to[0,\infty)$ be as in \cref{eq:assumptions}, with the additional assumption $w\equiv 0$ on $\partial\Sigma$, and let $E\subseteq\Sigma$ be a $w$-indecomposable set of finite $w$-perimeter with $w(E)<\infty$.
    Then, there is a sequence $(\Omega_i)_{i\in\N}$ of bounded connected open sets with smooth boundary such that $\Omega_i\compactsubset\Sigma$ and $w(\Omega_i\setminus E)\to 0$ and $\Per_w(\Omega_i)\to\Per_w(E)$ as $i\to\infty$.
\end{proposition}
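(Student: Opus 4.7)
My plan is to mimic the strategy of \cref{prop:indecomposable}, prepended with a truncation step to bring the problem compactly inside $\Sigma$ and a decomposition step to restore ordinary indecomposability.

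\emph{Step 1 (truncation).} I will set $E_k \defeq E \cap \Sigma_k$ with $\Sigma_k \defeq B_{R_k}\cap\{d(\cdot,\partial\Sigma) > t_k\}\compactsubset\Sigma$ and show that $w(E \setminus E_k)\to 0$ and $\Per_w(E_k)\to\Per_w(E)$. The Cavalieri-type identity $\int_0^\infty \Haus^{1}_w(E\cap\partial B_R)\de R = w(E) < \infty$ (obtained from the coarea formula applied to $|x|$) lets me select $R_k\to\infty$ with $\Haus^{1}_w(E\cap \partial B_{R_k})\to 0$. For each such $R_k$, uniform continuity of $w$ on the compact set $\overline\Sigma\cap\overline B_{R_k+1}$ together with $w|_{\partial\Sigma}=0$ makes $w$ arbitrarily small on the level set $\{d(\cdot,\partial\Sigma) = t\}\cap B_{R_k}$ when $t$ is small. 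Since in $\R^2$ this level set has $\Haus^{1}$-length $\lesssim R_k$ (being a curve parallel to $\partial\Sigma$), I can pick $t_k$ small enough that $\Haus^{1}_w(E^{(1)} \cap\{d = t_k\}\cap B_{R_k}) \to 0$ as well. Counting the contributions to $\bou E_k\cap\Sigma$ then gives $\Per_w(E_k)\le\Per_w(E)+o(1)$, and lower semicontinuity of $\Per_w$ along the $L^1_{loc}$-convergence $E_k\to E$ yields the matching lower bound.

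\emph{Step 2 (indecomposable component).} Next, decompose each $E_k$ into its (ordinary) indecomposable components and let $F_k$ be one of maximal $w$-measure (it exists because the $w$-measures sum to $w(E_k)<\infty$). The central claim is $w(F_k)\to w(E)$; granted this, lower semicontinuity again gives $\Per_w(F_k)\to\Per_w(E)$. To prove the claim, suppose by contradiction that along a subsequence $w(E_k\setminus F_k)\ge\delta>0$. Since $\Per_w(F_k) + \Per_w(E_k\setminus F_k) = \Per_w(E_k)$ remains bounded, I can extract $L^1_{loc}(\Sigma)$-limits $F_k\to F$ and $E_k\setminus F_k\to G$ by compactness of sets of finite perimeter, obtaining $E = F\sqcup G$ with $w(F), w(G) > 0$. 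Combining lower semicontinuity
\begin{equation*}
\Per_w(F) + \Per_w(G)\le\liminf\bigl(\Per_w(F_k) + \Per_w(E_k\setminus F_k)\bigr) = \Per_w(E)
\end{equation*}
with the elementary subadditivity $\Per_w(E) \le \Per_w(F) + \Per_w(G)$ forces equality, contradicting the $w$-indecomposability of $E$.

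\emph{Step 3 (smoothing and diagonal extraction).} Since $F_k \subseteq \Sigma_k \compactsubset \Sigma$ and $w$ is continuous and bounded below on $\overline{\Sigma_k}$, we have $\abs{F_k},\,\Per(F_k)<\infty$, so I can apply \cref{prop:indecomposable} to $F_k$ and obtain connected smooth open sets $\Omega_{k,i}$ with $\abs{\Omega_{k,i}\triangle F_k}\to 0$ and $\Per(\Omega_{k,i})\to\Per(F_k)$. The explicit construction of $\Omega_{k,i}$ as a superlevel set of a mollification of $\chi_{F_k}$ with a kernel supported in $B_{1/i}$ places each $\Omega_{k,i}$ in a shrinking neighborhood of $F_k$, hence compactly inside $\Sigma$ for $i$ large. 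The main obstacle I foresee is upgrading the unweighted perimeter convergence to $\Per_w(\Omega_{k,i})\to\Per_w(F_k)$: this follows from the classical fact that $L^1$-convergence together with convergence of total perimeter upgrades weak-$*$ convergence of the Gauss-Green measures to convergence in total variation, so testing against the continuous bounded weight $w$ on a fixed compact neighborhood of $F_k$ inside $\Sigma$ yields the desired weighted-perimeter convergence. A diagonal extraction $\Omega_k \defeq \Omega_{k, i(k)}$ then produces bounded connected smooth open sets $\Omega_k \compactsubset\Sigma$ with $w(\Omega_k\setminus E)\le w(\Omega_k\triangle F_k)\to 0$ (using $F_k\subseteq E$) and $\Per_w(\Omega_k)\to\Per_w(E)$, as required.
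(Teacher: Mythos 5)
Your overall strategy mirrors the paper's: reduce to the case $E\compactsubset\Sigma$, extract the largest indecomposable piece, then reuse \cref{prop:indecomposable} with a weighted upgrade of the perimeter convergence (which you justify correctly via Reshetnyak-type continuity of the total variation measures). Your Step~1 genuinely differs from the paper's, however: you carve off the boundary layer $\{d(\cdot,\partial\Sigma)\le t_k\}$ via a coarea argument, whereas the paper cuts only by balls $E\cap B_{r_k}$, extracts the largest $w$-indecomposable component, and then \emph{translates} it by $\eps_k v$ with $v\in\Sigma$ to push the set compactly inside the cone. Both tricks exploit $w\equiv 0$ on $\partial\Sigma$ in an essential way --- yours through the smallness of $w$ on the cut, the paper's through the near-invariance of $\Per_w$ under small translations. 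Your ordering (truncate first, then decompose) has the mild advantage that the sets whose indecomposable components you take already have finite ordinary perimeter, so \cite{ACMM2001} applies verbatim.

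There is a gap in Step~2. After extracting $L^1_{\mathrm{loc}}$ limits $F_k\to F$, $E_k\setminus F_k\to G$ with $E=F\sqcup G$, you assert $w(F),w(G)>0$ and derive a contradiction with $w$-indecomposability. Dominated convergence (using $F_k\subseteq E$, $w(E)<\infty$) does give $w(G)=\lim w(E_k\setminus F_k)\ge\delta$, but nothing you wrote prevents $w(F)=0$, in which case $G=E$ up to null sets, the decomposition $E=F\sqcup G$ is trivial, and no contradiction ensues. You need a uniform lower bound on $w(F_k)$. One fix: apply \cref{isop-ineq} to each indecomposable component $C$ of $E_k$ and use $w(C)\le w(F_k)$ to write $w(C)\le w(F_k)^{1/D}\,w(C)^{(D-1)/D}\le w(F_k)^{1/D}\Per_w(C)/D$; summing over components gives $D\,w(E_k)\le w(F_k)^{1/D}\Per_w(E_k)$, hence
\begin{equation*}
w(F_k)\ge\Bigl(\frac{D\,w(E_k)}{\Per_w(E_k)}\Bigr)^{D},
\end{equation*}
which stays bounded away from zero since $w(E_k)\to w(E)>0$ and $\Per_w(E_k)\to\Per_w(E)<\infty$. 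Alternatively, since your $E_k=E\cap\Sigma_k$ increase to $E$ (taking $R_k\nearrow\infty$, $t_k\searrow 0$), each indecomposable component of $E_k$ sits inside a unique component of $E_{k+1}$, so $w(F_k)$ is nondecreasing and $w(F_k)\ge w(F_1)>0$. With either fix your contradiction argument closes, and the remainder of the proof is correct.
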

\begin{proof}
    As a first step, we prove that $E$ can be approximated by a sequence $(F_k)_{k\in\N}\subseteq\Sigma$ of sets of finite $w$-perimeter such that $F_k\compactsubset\Sigma$. Since the proof is standard and technical we will skip some details.
    
    We can find a sequence of radii $r_k\to\infty$ such that $E_k\defeq E\cap B_{r_k}$ is a set of finite $w$-perimeter and $\Per_w(E_k)\to\Per_w(E)$ as $k\to\infty$.
    Thanks to \cite[Theorem 1]{ACMM2001} (adapting their arguments to our setting is straightforward), we can define $\widetilde E_k$ as the largest $w$-indecomposable component of $E_k$ (i.e., $w(\widetilde E_k)\ge w(C)$ for any $w$-indecomposable component of $E_k$). 
    Since $E_k\nearrow E$ and $E$ is indecomposable, it follows that $w(\widetilde E_k\triangle E)\to 0$ and $\Per_w(\widetilde E_k)\to\Per_w(E)$ as $k\to\infty$.
    Now, fix a vector $v\in\Sigma$ and define $F_k\defeq \eps_k v + \widetilde E_k$, where $\eps_k>0$ is such that $w(F_k\triangle \widetilde E_k)\to 0$ and $\abs{\Per_w(F_k)-\Per_w(\widetilde E_k)}\to 0$ as $k\to\infty$ (here we need the assumption $w\equiv 0$ on $\partial\Sigma$).
    The sets of finite $w$-perimeter $F_k$ are compactly contained in $\Sigma$ and satisfy $w(F_k\triangle E)\to 0$ and $\Per_w(F_k)\to\Per_w(E)$ as $k\to\infty$.
    
    Taking into account the approximation result we have just shown, we can assume, without loss of generality, that $E\compactsubset\Sigma$. To conclude, it is sufficient to repeat the proof of \cref{prop:indecomposable} (notice that the weight is bounded away from $0$ and $\infty$ in an open set $A$ such that $E\compactsubset A\compactsubset \Sigma$).
\end{proof}

\vspace{10ex}

\printbibliography

\end{document}